\numberwithin{equation}{section}
\newtheorem{definition}{Definition}[section]
\newtheorem{theorem}[definition]{Theorem}
\newtheorem{lemma}[definition]{Lemma}
\newtheorem{corollary}[definition]{Corollary}
\newtheorem{proposition}[definition]{Proposition}
\newtheorem{example}[definition]{Example}
\newtheorem{remark}[definition]{Remark}
\def\N{{\mathbb N}}
\def\Z{{\mathbb Z}}
\def\R{{\mathbb R}}
\def\T{{\mathbb T}}
\def\C{{\mathbb C}}
\def\U{{\mathbb U}}
\newcommand{\Cd}{{\C^d}}
\newcommand{\lspan}{{\mathrm{span}}}
\newcommand{\supp}{{\mathrm{supp}}}
\newcommand{\lt}{{L^2(\R)}}
\newcommand{\im}{{\mathrm{Im} \,}}
\newcommand{\ift}{{\mathcal{F}^{-1}}}
\newcommand{\ft}{{\mathcal{F}}}
\newcommand{\cratio}{{\mathrm{CR}}}
\begin{document}

\title[Nonlinear determination under unimodular constraints]{Nonlinear determination and phase retrieval under unimodular constraints}

\author[Lukas Liehr]{Lukas Liehr}
\address{Department of Mathematics, Bar-Ilan University, Ramat-Gan 5290002, Israel}
\email{lukas.liehr@biu.ac.il}

\author[Tomasz Szczepanski]{Tomasz Szczepanski}
\address{Department of Mathematical and Statistical Sciences, University of Alberta, Edmonton, AB, Canada, T6G 2G1}
\email{tszczepa@ualberta.ca}

\date{\today}
\subjclass[2020]{42C15,42C30,46C05,46T99}
\keywords{phase retrieval, nonlinear functional analysis, exponential systems, Möbius transform, algebraic sets}

\begin{abstract}
We study nonlinear determination problems in Hilbert spaces in which inner products are observed up to prescribed rotations in the complex plane. Given a Hilbert space $H$ and a subset $\Theta$ of the unit circle $\mathbb{T}$, we say that a system $\mathbf{G}\subseteq H$ does \emph{$\Theta$-phase retrieval} ($\Theta$-PR) if for all $f,h\in H$ the condition that for every $g\in\mathbf{G}$ there exists $\theta_g\in\Theta$ with $\langle f,g\rangle=\theta_g\langle h,g\rangle$ forces $f=\theta h$ for some $\theta\in\Theta$. This framework unifies classical phase retrieval ($\Theta=\mathbb{T}$) and sign retrieval ($\Theta=\{1,-1\}$).
For every countable $\Theta$ we give a complete characterization of $\Theta$-PR in terms of covers of $\mathbf{G}$ and geometric relations among vectors in the corresponding orthogonal complements, extending the complement-property characterization of Cahill, Casazza, and Daubechies. For cyclic phase sets we show that $\Theta$-PR is equivalent to the existence of specific second-order recurrence relations. We apply this to obtain a sharp lattice density criterion for $\Theta$-PR of exponential systems.
For uncountable $\Theta$ we obtain a topological dichotomy in the Fourier determination setting, showing that $\Theta$-PR is characterized in terms of connectedness of $\Theta$.
We further develop a Möbius-invariant framework, proving that $\Theta$-PR is preserved under circle automorphisms and is governed by projective invariants such as the cross ratio. Finally, in $\mathbb{C}^d$ we determine sharp impossibility thresholds and prove that for countable $\Theta$ the property is generic once one passes the failure regime, yielding the minimal number of vectors required for $\Theta$-PR.
\end{abstract}

\maketitle

\section{Introduction and main results}

Let $H$ be a complex Hilbert space with inner product $\langle \cdot, \cdot \rangle$ and let $\mathbf{G} \subseteq H$ be a system of vectors. Given $f,h \in H$, assume that for every $g \in \mathbf{G}$ we have
\begin{equation}\label{eq:linear_condition}
\langle f,g \rangle = \langle h,g \rangle.
\end{equation}
If $\mathbf{G}$ is complete in $H$, i.e., the linear span of $\mathbf{G}$ is dense in $H$, then condition \eqref{eq:linear_condition} forces $f=h$. In other words, the collection of scalar products $\{\langle f,g \rangle : g \in \mathbf{G} \}$ uniquely determines the vector $f$. Now consider the situation where these scalar products are not observed exactly, but only up to prescribed rotations in the complex plane. Precisely, suppose that for each $g \in \mathbf{G}$ the inner products $\langle f,g\rangle$ and $\langle h,g\rangle$ are allowed to differ by a rotation factor $\theta_g$ belonging to a fixed subset $\Theta$ of the unit circle $\T \coloneqq \{ z \in \C : |z| = 1 \}$. The linear constraint \eqref{eq:linear_condition} is therefore replaced by the following nonlinear condition: for every $g \in \mathbf{G}$ there exists $\theta_g \in \Theta$ such that
$$
\langle f,g \rangle = \theta_g \langle h,g \rangle.
$$
As before, we would like this family of constraints to determine $f$. In general, we can no longer conclude that $f$ and $h$ coincide; the best one can expect is that there exists $\theta \in \Theta$ such that $f = \theta h$. Moreover, as we will observe, if $\Theta$ has at least two elements then completeness of the system $\mathbf{G}$ alone does not imply that $f= \theta h$ and one has to impose specific redundancy in $\mathbf{G}$.

Two classical special cases fit into this framework. If $\Theta = \T$, then for each $g \in \mathbf{G}$ the existence of some $\theta_g \in \T$ with $\langle f,g \rangle = \theta_g \langle h,g \rangle$ is equivalent to the magnitude equality
\begin{equation}\label{eq:magnitude}
    |\langle f,g \rangle| = | \langle h,g \rangle|.
\end{equation}
In this case, the problem of deciding whether the family of equalities in \eqref{eq:magnitude} implies $f = \theta h$ for some $\theta \in \T$ is known as the uniqueness problem in phase retrieval (PR) (see, for instance, the survey \cite{grohskopp2020phase} and the references therein). This problem was first considered in form of the Pauli problem, which is concerned with the question of whether a function is determined from its absolute value and the absolute value of its Fourier transform \cite{pauli1990allgemeinen,ramos}.

If $H$ is a real Hilbert space, then condition \ref{eq:magnitude} reduces to
\begin{equation}\label{eq:sign}
    \langle f,g \rangle =  \pm \langle h,g \rangle
\end{equation}
and one speaks of the sign retrieval problem, which we will also refer to as real phase retrieval (real PR). Phase retrieval and its real analogue arise naturally in a variety of applications where phase information is lost or unobservable. Prominent examples include X-ray crystallography \cite{elser2003solution,thibault2008high}, diffraction imaging \cite{miao2015beyond,shechtman2015phase}, and quantum mechanics \cite{orl1994phase,raymer1997measuring}. Beyond these applications, phase retrieval has become increasingly relevant in signal processing and data analysis \cite{mallat2016understanding}. From a mathematical perspective, these applications have motivated an extensive mathematical theory concerned with the fundamental questions of uniqueness and stability. These questions have been investigated from a wide range of viewpoints, for instance, functional analytic perspectives \cite{Freeman2024,christ2022examples,freeman2025cahill,aldroubi2020phaseless,alharbi2023stable}, convex geometry \cite{bianchi2011phase}, complex analysis \cite{grohs2021stable,jaming2014uniqueness,wellershoff2024phase}, sampling theory \cite{grohs2025phase,jaming2024gabor,ramos,alaifari2024unique}, finite-dimensional and algebraic perspectives \cite{bendory2022algebraic,amir2025stability}, group-theoretical settings \cite{bartusel2023phase} and algorithmic approaches \cite{candes2015phase,candes2015phase2,steinerberger2022eigenvector}.

In the present paper we consider a unified version of these problems, allowing an arbitrary rotation set $\Theta \subseteq \T$. This leads us to the following definition.

\begin{definition}
Let $H$ be a Hilbert space, let $\mathbf{G} \subseteq H$ and let $\Theta \subseteq \T$. We say that $\mathbf{G}$ does $\Theta$-PR (in $H$) if for every $f,h \in H$ the following holds: if for every $g \in \mathbf{G}$ there exists $\theta_g \in \Theta$ such that
\begin{equation}\label{eq:theta_pr}
    \langle f,g \rangle = \theta_g \langle h,g \rangle,
\end{equation}
then $f=\theta h$ for some $\theta \in \Theta$.
\end{definition}

Classical spanning properties are related to $\Theta$-PR, but none of them alone provide a complete description of the systems $\mathbf{G}$ doing $\Theta$-PR. On the one hand, any system doing $\Theta$-PR must be complete. On the other hand, for finite sets $\Theta$, imposing sufficiently strong redundancy such as overcompleteness, reduces the nonlinear determination problem to a linear one and implies that a system does $\Theta$-PR. However, overcompleteness is neither necessary nor intrinsic to $\Theta$-PR. We refer to Section \ref{sec:completness_and_overcompleteness} for detailed comparisons between $\Theta$-PR and classical spanning properties.

A sharper picture emerges when exactly two phases are allowed, i.e., $|\Theta|=2$, where $|X|$ denotes the cardinality of a set $X$. In this case, $\Theta$-PR is governed by a strengthened form of completeness, known as the complement property: $\mathbf{G} \subseteq H$ has the complement property if for every $\mathbf{S} \subseteq \mathbf{G}$ it holds that $\mathbf{S}$ is complete in $H$ or $\mathbf{G} \setminus \mathbf{S}$ is complete in $H$. We show the following.

\begin{proposition}\label{prop:cp}
Let $\mathbf{G} \subseteq H$ and let $\Theta \subseteq \T$ satisfy $|\Theta|=2$. Then $\mathbf{G}$ does $\Theta$-PR if and only if $\mathbf{G}$ has the complement property.
\end{proposition}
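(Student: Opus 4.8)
The plan is to prove both implications through a single linear change of variables that converts the nonlinear rotation condition into a statement about orthogonality. Write $\Theta = \{\alpha,\beta\}$ with $\alpha \neq \beta$, and to a pair $f,h \in H$ associate the two vectors $u \coloneqq f - \alpha h$ and $v \coloneqq f - \beta h$. Because $\alpha \neq \beta$, the assignment $(f,h)\mapsto (u,v)$ is a linear bijection of $H\times H$; explicitly $h = (u-v)/(\beta-\alpha)$ and $f = u + \alpha h$. Under this correspondence the constraint $\langle f,g\rangle = \theta_g\langle h,g\rangle$ with $\theta_g\in\Theta$ says precisely that $g\perp u$ or $g\perp v$ (using $\langle f-\alpha h,g\rangle = 0$ resp. $\langle f-\beta h,g\rangle=0$), while the conclusion $f=\theta h$ for some $\theta\in\Theta$ is equivalent to $u=0$ or $v=0$. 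This dictionary is the heart of the argument.

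For the direction that the complement property implies $\Theta$-PR, suppose $f,h$ satisfy \eqref{eq:theta_pr} and pass to $u,v$. Let $\mathbf{S} \coloneqq \{g\in\mathbf{G} : \langle u,g\rangle = 0\}$. Every $g\in\mathbf{G}\setminus\mathbf{S}$ has $\langle u,g\rangle\neq 0$, so the dictionary forces $\langle v,g\rangle = 0$; thus $v\perp(\mathbf{G}\setminus\mathbf{S})$, while $u\perp\mathbf{S}$ by construction. Applying the complement property to $\mathbf{S}$, either $\mathbf{S}$ is complete, so that $u$ is orthogonal to $\overline{\lspan}\,\mathbf{S}=H$ and hence $u=0$, or $\mathbf{G}\setminus\mathbf{S}$ is complete, forcing $v=0$. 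In either case $f=\theta h$ for some $\theta\in\Theta$.

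For the converse I argue by contraposition. If $\mathbf{G}$ lacks the complement property, choose $\mathbf{S}\subseteq\mathbf{G}$ with both $\mathbf{S}$ and $\mathbf{G}\setminus\mathbf{S}$ incomplete, and pick nonzero vectors $u\perp\mathbf{S}$ and $v\perp(\mathbf{G}\setminus\mathbf{S})$. Define $f,h$ from $u,v$ by inverting the change of variables. Then $g\perp u$ for every $g\in\mathbf{S}$ and $g\perp v$ for every $g\in\mathbf{G}\setminus\mathbf{S}$, so the dictionary supplies a valid $\theta_g\in\Theta$ for each $g$, i.e. \eqref{eq:theta_pr} holds. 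Since $u\neq 0$ and $v\neq 0$ we have $f\neq\alpha h$ and $f\neq\beta h$, hence $f\neq\theta h$ for any $\theta\in\Theta$, so $\mathbf{G}$ fails $\Theta$-PR.

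The step requiring the most care is the dictionary itself, specifically the role of $|\Theta|=2$: the invertibility of $(f,h)\mapsto(u,v)$ uses $\alpha\neq\beta$ in an essential way, and it is exactly the availability of only two phases that lets the single dichotomy ``$\mathbf{S}$ complete or $\mathbf{G}\setminus\mathbf{S}$ complete'' match the binary alternative ``$u=0$ or $v=0$.'' I would also verify the degenerate configurations (for instance $\langle h,g\rangle = 0$, or $h=0$) to confirm the equivalence $\langle f,g\rangle = \theta_g\langle h,g\rangle \iff (g\perp u \text{ or } g\perp v)$ holds without exception; these cause no trouble, since a vanishing $\langle h,g\rangle$ forces $\langle f,g\rangle = 0$ as well, which is consistent with $g$ being orthogonal to both $u$ and $v$.
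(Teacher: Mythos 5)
Your proof is correct and takes essentially the same route as the paper: your set $\mathbf{S}=\{g\in\mathbf{G}:\langle u,g\rangle=0\}$ coincides with the paper's $\mathbf{S}=\{g\in\mathbf{G}:\langle f,g\rangle=\theta_1\langle h,g\rangle\}$ in the sufficiency direction, and in the converse your inversion of the change of variables produces (up to scalars) exactly the paper's counterexample pair $\theta_1 f+\theta_2 h$ and $f+h$ built from nonzero vectors orthogonal to $\mathbf{S}$ and $\mathbf{G}\setminus\mathbf{S}$. Your dictionary $(f,h)\mapsto(f-\alpha h,\,f-\beta h)$ simply makes explicit the substitution underlying the paper's argument, which the paper itself uses in this explicit form in the proof of Theorem \ref{thm:m1}.
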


When specified to systems in real Hilbert spaces doing real PR (that is $\Theta$-PR with respect to $\Theta=\{-1,1\}$), then the latter Proposition is a classical result in the literature. In the finite dimensional setting this was shown in \cite{balan2006signal}, in general Hilbert spaces it was proved in \cite{cahill2016phase}, and the setting of Banach spaces was treated in \cite{alaifari2017phase}. Moreover, in these references it is shown that for complex Hilbert spaces the complement property is only a necessary condition for systems to do PR (i.e. $\Theta$-PR with $\Theta=\T$). In our broader framework of $\Theta$-PR, the distinction between real and complex PR disappears. In particular, Proposition \ref{prop:cp} shows that the property of a system doing $\Theta$-PR does not depend on $\Theta\subseteq\T$, whenever $|\Theta|=2$.

In the language of failure, Proposition \ref{prop:cp} allows us to deduce the following equivalent statement: given $\Theta\subseteq \T$ with $|\Theta|=2$, a system $\mathbf{G}$ fails $\Theta$-PR precisely when there exist $\mathbf{G}_1, \mathbf{G}_2 \subseteq \mathbf{G}$ such that $\mathbf{G}_1\cup \mathbf{G}_2=\mathbf{G}$, with both $\mathbf{G}_1$ and $\mathbf{G}_2$ being incomplete in $H$. The failure of $\Theta$-PR for larger rotation sets $\Theta$ is encoded in a more delicate geometrical structure. In particular, obstructions can no longer be detected by mere completeness properties of a partition, but instead arise from a coherent interaction between multiple subsets of $\mathbf{G}$. The next theorem makes this phenomenon precise by characterizing failure of $\Theta$-PR for countable sets $\Theta$ in terms of covers of $\mathbf{G}$ and algebraic relations among vectors in the associated orthogonal complements. It may be viewed as a far-reaching extension of the characterization in Hilbert spaces based on the complement property obtained by Cahill, Casazza, and Daubechies \cite[Theorem 1.2]{cahill2016phase}. We say that $\{\mathbf{G}_j\}_{j \in \N}$ is a cover of $\mathbf{G}$ if for every $j\in\N$ we have $\mathbf{G}_j\subseteq \mathbf{G}$ and $\bigcup_{j=1}^\infty \mathbf{G}_j = \mathbf{G}$. Moreover, we denote the orthogonal complement of a set $X \subseteq H$ by $X^\perp$.

\begin{theorem}\label{thm:m1}
   Let $H$ be a Hilbert space and suppose that $\Theta=\{\theta_j\}_{j \in \N}\subseteq \T$ satisfies $\theta_1\neq \theta_2$. Then $\mathbf{G} \subseteq H$ fails $\Theta$-PR if and only if there exist a cover $\{\mathbf{G}_j\}_{j \in \N}$ of $\mathbf{G}$ and a sequence $\{ x_j \}_{j \in \N} \subseteq H$ satisfying
    \begin{enumerate}
        \item $0 \neq x_j \in \mathbf{G}_j^\perp$ for every $j\in \N$,
        \item $x_1$ and $x_2$ are linearly independent,
        \item for every $j \geq 3$ it holds that
        $$x_j = \frac{\theta_j-\theta_1}{\theta_2-\theta_1}x_2 - \frac{\theta_j-\theta_2}{\theta_2-\theta_1}x_1.$$
    \end{enumerate} 
\end{theorem}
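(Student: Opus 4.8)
The plan is to set up an explicit dictionary between witnesses of failure and the combinatorial-geometric data on the right-hand side, exploiting that the vectors $f-\theta h$ depend \emph{affinely} on $\theta$. Concretely, if $f,h$ witness failure, I set $x_j := f-\theta_j h$ and $\mathbf{G}_j := \{g\in\mathbf{G} : \langle f,g\rangle = \theta_j\langle h,g\rangle\}$. The phase condition $\langle f,g\rangle = \theta_g\langle h,g\rangle$ for every $g$ says precisely that $\{\mathbf{G}_j\}$ covers $\mathbf{G}$, while $g\in\mathbf{G}_j \iff \langle f-\theta_j h,g\rangle=0$ gives $x_j\in\mathbf{G}_j^\perp$, i.e. part of (1). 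The recurrence (3) is then automatic: since $\theta\mapsto f-\theta h$ is affine, $x_j$ is the unique affine combination of $x_1,x_2$ with the correct value at $\theta_j$, and a direct expansion of the right-hand side of (3) collapses to $f-\theta_j h$. Geometrically, (3) merely records that all $x_j$ are collinear on the affine line $\{f-th : t\in\C\}$, parametrised by $\Theta$.

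For the sufficiency direction I would invert this dictionary. Given a cover and a sequence obeying (1)--(3), solve the two base relations for $h := (x_1-x_2)/(\theta_2-\theta_1)$ and $f := x_1+\theta_1 h$; the affine identity together with (3) then yields $x_j = f-\theta_j h$ for every $j$ (the cases $j=1,2$ are immediate and $j\ge 3$ follows by expanding (3)). Since $\{\mathbf{G}_j\}$ is a cover, every $g\in\mathbf{G}$ lies in some $\mathbf{G}_j$, whence $x_j\in\mathbf{G}_j^\perp$ gives $\langle f,g\rangle=\theta_j\langle h,g\rangle$, so the phase condition holds. Finally, if $f=\theta h$ for some $\theta\in\Theta$, writing $\theta=\theta_k$ forces $x_k=f-\theta_k h=0$, contradicting (1); hence $f\neq\theta h$ for all $\theta\in\Theta$ and $(f,h)$ genuinely witnesses failure. (Observe that (2) is not even needed here; it is required only to guarantee that the data \emph{can} always be produced in the other direction.)

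For the necessity direction, the construction $x_j=f-\theta_jh$, $\mathbf{G}_j=\{g:\langle f,g\rangle=\theta_j\langle h,g\rangle\}$ delivers the cover, condition (1) (since $x_j=0$ would mean $f=\theta_j h$), and (3) as above; and when $f,h$ are linearly independent, (2) holds because $\theta_1\neq\theta_2$ makes $x_1=f-\theta_1h$ and $x_2=f-\theta_2h$ independent. The main obstacle is precisely the degenerate witnesses: if $f,h$ are linearly \emph{dependent}, then $x_1,x_2$ are dependent too and (2) fails. I expect to resolve this by noting that a dependent witness forces $\mathbf{G}$ to be incomplete---if $h=0$ then $f\in\mathbf{G}^\perp\setminus\{0\}$, and if $h\neq 0$ then $f=ch$ with $c\notin\Theta$, so $(c-\theta_g)\langle h,g\rangle=0$ forces $h\in\mathbf{G}^\perp\setminus\{0\}$. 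Incompleteness then buys the missing freedom: picking $0\neq v\in\mathbf{G}^\perp$ and any $w$ independent of $v$ (possible since $\dim H\ge 2$), I set $\mathbf{G}_2:=\mathbf{G}$, $x_2:=v$, and $\mathbf{G}_j:=\varnothing$ for $j\neq 2$, so that $\bigcup_j\mathbf{G}_j=\mathbf{G}$ and $\varnothing^\perp=H$ makes $x_1:=w$ and the $x_j$ defined by (3) automatically lie in $\mathbf{G}_j^\perp$; collinearity with the independent pair $\{w,v\}$ keeps every $x_j$ nonzero. This step relies on the admissibility of empty members in a cover, and is the one place where $\dim H\ge 2$ is used (for $\dim H=1$, condition (2) is unsatisfiable and the statement degenerates).
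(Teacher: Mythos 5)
Your proof is correct, and its core coincides with the paper's: the same affine dictionary $x_j = f - \theta_j h$, $\mathbf{G}_j = \{ g \in \mathbf{G} : \langle f,g\rangle = \theta_j \langle h,g\rangle \}$, and the same vector identity $(\theta_2-\theta_1)(f-\theta_j h) = (\theta_j-\theta_1)(f-\theta_2 h) - (\theta_j-\theta_2)(f-\theta_1 h)$ driving condition (3). You deviate in two places, both legitimately. First, in the sufficiency direction the paper uses (2) to make $f,h$ linearly independent, so that $f \neq \theta h$ is automatic; you instead derive $f \neq \theta_k h$ from (1) alone (since $f = \theta_k h$ would force $x_k = 0$), which yields a genuine observation the paper does not record: the cover together with (1) and (3) already implies failure, and (2) is needed only so that the data can always be \emph{produced} in the converse direction. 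Second, for necessity the paper outsources the degenerate case to its Lemma \ref{lma:failure_witnessing}, proved beforehand, which upgrades an arbitrary failure witness to a linearly independent pair (in the incomplete case by passing to the pair $f+\theta h$, $h$ with $f \in \mathbf{G}^\perp\setminus\{0\}$ and $h \in \mathbf{G}$) and then runs a single uniform dictionary; you instead keep the dependent witness, re-derive its consequence that $\mathbf{G}^\perp \neq \{0\}$ (the same computation as Case 1 of that lemma), and build the cover data directly, padding with empty members so that $\varnothing^\perp = H$ absorbs every constraint except $x_2 \in \mathbf{G}^\perp$, with nonvanishing of all $x_j$ guaranteed because the coefficients $\frac{\theta_j-\theta_1}{\theta_2-\theta_1}$ and $\frac{\theta_j-\theta_2}{\theta_2-\theta_1}$ cannot both vanish. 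This is sound under the paper's conventions: its definition of cover requires only $\mathbf{G}_j \subseteq \mathbf{G}$ (so empty members are admissible, and indeed the paper's own cover in the necessity direction may contain empty sets), and $\dim(H)\geq 2$ is a standing assumption of Section \ref{sec:2}, which you correctly flag as the one place the ambient dimension enters. In sum, your route trades the paper's reusable witness-upgrading lemma (which the paper also needs for Theorem \ref{thm:mbs} and Proposition \ref{prop:arc}) for a self-contained case analysis inside the theorem, at the small price of duplicating half of that lemma's proof, while gaining a sharper accounting of which hypothesis does what.
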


A careful examination of Theorem \ref{thm:m1} will allow us to deduce that in the case when $|\Theta|=3$, similarly to the case of $|\Theta| = 2$, the property of a system doing $\Theta$-PR is independent of the choice of $\Theta$ (see Corollary \ref{cor:3_pr_characterization}). Furthermore, Theorem \ref{thm:m1} yields a particularly transparent description when the rotation set $\Theta$ is generated by a single element. In this case, the relations appearing in Theorem \ref{thm:m1} allow us to view failure of $\Theta$-PR through the lens of existence of a recurrence relation.

\begin{proposition}\label{prop:recurrence}
Let $H$ be a Hilbert space and let $\Theta=\{\theta_j\}_{j \in \N} \subseteq \T$ be defined by $\theta_j=\omega^j$ for some $\omega \in \T\setminus\{1\}$. Then $\mathbf{G} \subseteq H$ fails $\Theta$-PR if and only if there exist a cover $\{\mathbf{G}_j\}_{j \in \N}$ of $\mathbf{G}$ and a sequence $\{ x_j \}_{j \in \N} \subseteq H$ satisfying
\begin{enumerate}
        \item $0 \neq x_j \in \mathbf{G}_j^\perp$ for every $j\in \N$,
        \item $x_1$ and $x_2$ are linearly independent,
        \item for every $j \geq 3$, the vector $x_j$ is given by the second-order recurrence relation $x_j = (1+\omega)x_{j-1} - \omega x_{j-2}$.
    \end{enumerate}
\end{proposition}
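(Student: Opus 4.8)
The plan is to obtain Proposition \ref{prop:recurrence} as a direct specialization of Theorem \ref{thm:m1}, the only genuinely new content being that for the particular sequence $\theta_j = \omega^j$ the explicit relation in item (3) of the theorem is equivalent to the second-order recurrence stated in the proposition. First I would verify that the theorem is applicable: since $\omega \in \T \setminus \{1\}$ we have $\theta_1 = \omega \neq \omega^2 = \theta_2$ (equality would force $\omega = 1$), so the hypothesis $\theta_1 \neq \theta_2$ holds. As items (1) and (2) of the proposition coincide verbatim with items (1) and (2) of the theorem, everything reduces to matching the two descriptions in item (3) for $j \geq 3$.

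For the core of the argument I would analyze the constant-coefficient recurrence $x_j = (1+\omega)x_{j-1} - \omega x_{j-2}$. Its characteristic polynomial is $t^2 - (1+\omega)t + \omega = (t-1)(t-\omega)$, with roots $1$ and $\omega$, which are distinct precisely because $\omega \neq 1$. Hence every solution has the form $x_j = A + B\,\omega^j$ with $A,B \in H$ uniquely determined by the initial data; solving the resulting $2\times 2$ system gives $B = (x_2 - x_1)/(\omega(\omega-1))$ and $A = (\omega x_1 - x_2)/(\omega - 1)$. Substituting and simplifying yields the closed form
$$
x_j = \frac{\omega^{j-1}-1}{\omega-1}\,x_2 \;-\; \frac{\omega\,(\omega^{j-2}-1)}{\omega-1}\,x_1 \qquad (j \geq 3).
$$
On the other hand, inserting $\theta_j = \omega^j$ into the coefficients of item (3) of Theorem \ref{thm:m1} and factoring $\theta_2 - \theta_1 = \omega(\omega-1)$ out of each numerator reproduces exactly these coefficients, so the two closed forms agree.

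It then remains to package this as an equivalence of the two conditions on a sequence. In one direction, any sequence given by the relation in Theorem \ref{thm:m1} is of the form $A + B\,\omega^j$ and therefore satisfies the recurrence by construction of the characteristic roots; in the other, a sequence satisfying the recurrence with the prescribed linearly independent terms $x_1, x_2$ is the unique such solution and hence coincides with the closed form above, i.e.\ with the relation of Theorem \ref{thm:m1}. There is no substantial obstacle here, since the analytic content is already carried by Theorem \ref{thm:m1}; the only point deserving care is to treat both implications, which is handled by the uniqueness of solutions to a second-order linear recurrence with given initial data.
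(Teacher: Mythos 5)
Your proposal is correct and follows essentially the same route as the paper: both reduce the proposition to Theorem \ref{thm:m1} and then prove that, for $\theta_j=\omega^j$, the explicit relation in item (3) of the theorem is equivalent to the recurrence $x_j=(1+\omega)x_{j-1}-\omega x_{j-2}$, the equivalence resting on the fact that either condition determines $x_j$ uniquely from $x_1,x_2$. The only (cosmetic) difference is in the algebra: you solve the recurrence via its characteristic polynomial $(t-1)(t-\omega)$ and the general solution $x_j=A+B\,\omega^j$, whereas the paper verifies the same identity by writing the coefficients as geometric sums and checking the recurrence for them directly.
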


Proposition \ref{prop:recurrence} turns out to be a particularly useful tool in the study of Fourier uniqueness problems under nonlinear constraints. Questions of this type are commonly studied under the names phase retrieval and sign retrieval in the Paley–Wiener space \cite{thakur2011reconstruction,alaifari2017reconstructing,pohl2014phaseless,lai2021conjugate,akutowicz1956determination,jaming2022effect,alaifari2024unique,wellershoff2024phase}.

To discuss this setting, define for $\lambda \in \C$ the exponential function $e_\lambda(x)=e^{2\pi i \lambda x}$. Given $\Lambda \subseteq \C$, we denote by $E(\Lambda) = \{ e_\lambda \}_{\lambda \in \Lambda}$ the associated exponential system with frequencies in $\Lambda$. For a function $f \in L^2[0,a]$ with $a>0$, let $c_n(f)$ denote its Fourier coefficients,
$$
c_n(f) \coloneqq \int_0^a f(t) e^{-2\pi i \frac{n}{a} t} \, dt, \quad n \in \Z.
$$
By the classical Fourier uniqueness theorem, if $c_n(f) = 0$ for all $n \in \Z$, then $f = 0$. Equivalently, the exponential system $E(\tfrac{1}{a}\Z)$ is complete in $L^2[0,a]$. More generally, for a lattice $\Lambda = \alpha \Z$, the system $E(\Lambda)$ is complete in $L^2[0,a]$ if and only if its (asymptotic) density
$$
D(\Lambda) = \lim_{r \to \infty} \frac{|\Lambda \cap [-r,r]|}{2r} = \frac{1}{\alpha}
$$
satisfies $D(\Lambda) \ge a$ \cite{Young}. This classical completeness condition can be reformulated as follows: for a lattice $\Lambda$, the implication
\begin{equation}\label{eq:fourier_lin}
    \int_0^a f(t) e^{-2\pi i \lambda t} \, dt = \int_0^a h(t) e^{-2\pi i \lambda t} \, dt, \quad \lambda \in \Lambda \implies f=h
\end{equation}
holds for all $f,h \in L^2[0,a]$ precisely when $D(\Lambda) \ge a$. If instead we ask whether $E(\Lambda)$ does $\Theta$-PR, then the linear condition on the left-hand side of \eqref{eq:fourier_lin} is replaced by the nonlinear constraint
$$
\int_0^a f(t) e^{-2\pi i \lambda t} \, dt = \theta_\lambda \int_0^a h(t) e^{-2\pi i \lambda t} \, dt, \quad \lambda \in \Lambda, \quad \theta_\lambda \in \Theta.
$$
When $\Theta$ consists of roots of unity, Proposition \ref{prop:recurrence} yields a concise characterization of $\Theta$-PR in terms of a density condition on $\Lambda$. This result may be viewed as a nonlinear analogue of the classical Fourier uniqueness theorem. Moreover, it generalizes results by Alaifari, Daubechies, Grohs, and Thakur who studied the case $\Theta = \{1,-1 \}$ \cite{thakur2011reconstruction,alaifari2017reconstructing}.

\begin{theorem}\label{thm:lattice_density_characterization}
    Let $\Lambda \subseteq \R$ be a lattice and let $\Theta = \{ e^{2\pi i k/n} : k=0,\dots,n-1 \}$. Then $E(\Lambda)$ does $\Theta$-PR in $L^2[0,a]$ if and only if $D(\Lambda) \geq na$.
\end{theorem}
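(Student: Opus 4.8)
The plan is to prove Theorem \ref{thm:lattice_density_characterization} using Proposition \ref{prop:recurrence} with $\omega=e^{2\pi i/n}$, since $\Theta=\{\omega^j:j=0,\dots,n-1\}$ is exactly the cyclic group generated by $\omega$ (after relabeling so that $\theta_j=\omega^j$ ranges over the $n$ distinct values and then repeats). The strategy is to show that the density threshold $D(\Lambda)\ge na$ corresponds precisely to the nonexistence of the recurrence-based obstruction of Proposition \ref{prop:recurrence}. First I would translate the orthogonal-complement language into the exponential setting: for a subset $\Lambda_j\subseteq\Lambda$, the condition $0\neq x_j\in E(\Lambda_j)^\perp$ in $L^2[0,a]$ means that $x_j$ is a nonzero function whose Fourier transform vanishes on $\Lambda_j$, i.e. $\widehat{x_j}(\lambda)=0$ for all $\lambda\in\Lambda_j$, where $\widehat{x_j}$ is the entire function (of exponential type $\le a$) given by the Fourier--Stieltjes transform of $x_j\in L^2[0,a]$. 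Thus each $x_j$ corresponds to a Paley--Wiener function $F_j$ of exponential type $a$ vanishing on $\Lambda_j$.

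Next I would exploit the recurrence. Because the transform is linear, the recurrence $x_j=(1+\omega)x_{j-1}-\omega x_{j-2}$ passes to the Paley--Wiener functions: $F_j=(1+\omega)F_{j-1}-\omega F_{j-2}$. Solving this constant-coefficient recurrence with characteristic roots $1$ and $\omega$ gives the closed form $F_j=A+B\omega^j$ for fixed entire functions $A,B\in PW_a$ (determined by $F_1,F_2$, with $A,B$ not both zero since $x_1,x_2$ are independent). Since the phases cycle with period $n$, the functions $F_j$ take only $n$ distinct values $A+B\omega^k$, $k=0,\dots,n-1$. The cover condition $\bigcup\Lambda_j=\Lambda$ together with $\widehat{x_j}(\Lambda_j)=0$ then says: every frequency $\lambda\in\Lambda$ lies in the zero set of at least one of the functions $A+B\omega^k$. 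Equivalently, $\Lambda$ is covered by the $n$ zero sets $Z_k=\{A+B\omega^k=0\}$, $k=0,\dots,n-1$. This is the crux: the existence of the obstruction is equivalent to finding $A,B\in PW_a$ (not both zero, and inducing genuinely independent $x_1,x_2$) whose $n$ shifted zero sets cover $\Lambda$.

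The final and decisive step is the density count. On the zero set $Z_k$ one has $A/B=-\omega^{-k}$, a constant; so each $Z_k$ is the solution set of $A(z)=-\omega^{-k}B(z)$, i.e. the zeros of the single $PW_a$ function $A+\omega^k B$ (type $\le a$). By the Jensen/Levinson counting bound for entire functions of exponential type, the asymptotic density of zeros of any nonzero element of $PW_a$ is at most $a$. Hence the union of the $n$ zero sets has density at most $na$, so covering $\Lambda$ forces $D(\Lambda)\le na$; contrapositively, $D(\Lambda)>na$ rules out the obstruction and yields $\Theta$-PR. For the converse, when $D(\Lambda)\le na$ (I must handle the boundary $D(\Lambda)=na$ carefully so as to land on the strict/nonstrict inequality claimed in the theorem), I would explicitly partition $\Lambda$ into $n$ sublattices each of density $D(\Lambda)/n\le a$, choose for each a nonzero $PW_a$ function vanishing on it (possible since a set of density $\le a$ is a zero set for exponential type $a$), set these equal to $A+\omega^k B$ to recover $A,B$, and verify conditions (1)--(3) hold, producing an explicit failure of $\Theta$-PR.

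The main obstacle I anticipate is the boundary case and the precise density bookkeeping: the classical completeness result for exponentials gives the sharp threshold $D(\Lambda)\ge a$ for a single $PW_a$ zero set, but I must verify that the $n$-fold covering/counting aggregates to the clean threshold $na$ with the correct (non)strict inequality, and that the independence condition (2) together with the requirement that the resulting $(f,h)$ genuinely violate $\Theta$-PR (rather than collapsing to $f=\theta h$) can always be arranged at the threshold. Reconciling the entire-function density bound (which is naturally an inequality on upper density) with the exact lattice density $D(\Lambda)=1/\alpha$ is where I expect the argument to require the most care, most likely leaning on the rigidity of lattices to make the counting exact rather than merely asymptotic.
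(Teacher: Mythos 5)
Your skeleton is essentially the right one, and the closed-form solution of the recurrence is exactly the structure underlying the paper's proof: solving $x_j=(1+\omega)x_{j-1}-\omega x_{j-2}$ gives $x_j = A+\omega^j B$ (in the paper's sufficiency direction one has, in effect, $A=f$, $B=-h$, so that $x_j = f-\omega^j h$), and the reformulation ``$\Lambda$ is covered by the $n$ zero sets of $A+\omega^k B$'' is correct. But your decisive step has a genuine gap at the threshold, which you flag but do not resolve, and it is not fixable by counting. The Jensen/Levinson density bound only gives: obstruction $\implies D(\Lambda)\le na$, i.e.\ $\Theta$-PR for $D(\Lambda)>na$, while the theorem asserts $\Theta$-PR already at $D(\Lambda)=na$. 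Zero-density counting cannot close this: a nonzero $PW_a$ function \emph{can} vanish on a set of real density exactly $a$ (the cardinal sine $\sin(\pi a z)/(\pi a z)$ lies in $PW_a$ and vanishes on $\tfrac1a\Z\setminus\{0\}$), so the union of $n$ such zero sets can have density exactly $na$ and the covering is not excluded by counting alone. The paper's repair is multiplicative rather than additive: form the product $G=\prod_{k=0}^{n-1}(A+\omega^k B)$, which (each factor being bounded and band-limited) lies in $PW_{na}$ and vanishes on all of $\Lambda$; equivalently, on the function side, $p=p_1*\cdots *p_n$ with $p_j=f-\omega_j h$ is supported in $[0,na]$ with $\widehat{p}|_\Lambda=0$. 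The classical completeness theorem for lattices holds at the \emph{non-strict} threshold, so $D(\Lambda)\ge na$ forces $G\equiv 0$, and since each factor is entire, analyticity forces some $A+\omega^k B\equiv 0$ --- which contradicts condition (1) of Proposition \ref{prop:recurrence} that every $x_j\neq 0$ (equivalently, it yields $f=\omega^k h$, proving $\Theta$-PR). This is precisely the ``rigidity of lattices'' you hoped for, but it enters through exact completeness at density $na$, not through sharpened zero counting.

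The second, unflagged gap is in your necessity construction and is more serious structurally: the step ``choose for each sublattice a nonzero $PW_a$ function vanishing on it, then set these equal to $A+\omega^k B$ to recover $A,B$'' fails for $n\ge 3$. That is an overdetermined system of $n$ functional equations in the two unknowns $A,B$: functions chosen independently for each sublattice will not lie in a common two-dimensional pencil $\{A+\zeta B\}$, so there is nothing to ``recover.'' One must build the pencil coherently from the start, and this is where the paper's actual work lies. Writing $\Lambda=\alpha\Z$, it takes the cover $\Lambda_j=\alpha(n\Z+j)$, sets $\xi=\tfrac{1}{2n\alpha}$, $\zeta=e^{i\pi/n}$, and defines $x_j=\zeta^j\sin\left(2\pi\xi x-\tfrac{j\pi}{n}\right)\Phi(x)$ for a suitable bump $\Phi\in PW$ with $\supp\widehat{\Phi}\subseteq[-(\tfrac12-\xi),\tfrac12-\xi]$ (the shrunken support absorbs the frequency shift $\pm\xi$ caused by the sine). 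Unwinding the sine shows $x_j=A+\omega^j B$ with $A,B$ the two modulations $e^{\pm 2\pi i\xi x}\Phi$ (up to constants); equivalently, the unimodular function $-A/B=e^{4\pi i\xi x}$ takes exactly the value $\omega^j$ on $\Lambda_j$, which is the coherence your proposal is missing. This single construction simultaneously delivers conditions (1)--(3): nonvanishing, linear independence of $x_1,x_2$, and the recurrence, and it works for every lattice density below $na$, so your worry about boundary bookkeeping on the necessity side does not arise (the strict inequality is exactly what makes $\xi<\tfrac12$ and hence $\Phi$ available).
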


The previous theorem implies, in particular, that the full exponential system $E(\mathbb{R})$ does $\Theta$-PR in $L^{2}[0,a]$ for every set $\Theta$ consisting of $n$-th roots of unity. In the special case $\Theta = \{1,-1\}$, this result was  obtained in \cite{thakur2011reconstruction}. On the other
hand, it is known that when $\Theta = \mathbb{T}$, the system $E(\mathbb{R})$ does not do PR, as counterexamples can be constructed using the so-called zero-flipping technique \cite{akutowicz1956determination}. What happens if $\Theta$ is a general set, different from roots of unity or the whole torus $\T$? In the following statement, we provide a topological characterization of all $\Theta$ for which the (full) exponential system $E(\mathbb{R})$ does $\Theta$-PR.

\begin{theorem}\label{thm:ER_disconnected}
    The exponential system $E(\R)$ does $\Theta$-PR in $L^2[0,a]$ if and only if $\Theta$ is totally disconnected.
\end{theorem}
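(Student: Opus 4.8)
\emph{Reformulation.}
For $f\in L^2[0,a]$ write $\widehat{f}(z)=\int_0^a f(t)e^{-2\pi i z t}\,dt$; by Paley--Wiener each $\widehat{f}$ is entire of exponential type, and $\langle f,e_\lambda\rangle=\widehat{f}(\lambda)$ for $\lambda\in\R$. Thus $E(\R)$ does $\Theta$-PR precisely when, for all $f,h\in L^2[0,a]$, the existence for every $\lambda\in\R$ of some $\theta_\lambda\in\Theta$ with $\widehat{f}(\lambda)=\theta_\lambda\widehat{h}(\lambda)$ forces $\widehat{f}=\theta\widehat{h}$ for a single $\theta\in\Theta$. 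Taking moduli, the hypothesis gives $|\widehat{f}(\lambda)|=|\widehat{h}(\lambda)|$ on $\R$, and on $U=\{\lambda\in\R:\widehat{h}(\lambda)\neq0\}$ the ratio $R(\lambda)=\widehat{f}(\lambda)/\widehat{h}(\lambda)=\theta_\lambda$ is a continuous, $\Theta$-valued function. The entire problem is to decide when such an $R$ must be constant.

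\emph{Sufficiency (totally disconnected $\Rightarrow$ $\Theta$-PR).}
Fix $f,h$ satisfying the hypothesis. If $h=0$ the hypothesis forces $\widehat{f}\equiv0$, so $f=0=\theta h$. If $h\neq0$, then $\widehat{h}$ is a nonzero entire function with isolated real zeros, so $U$ is a nonempty union of open intervals. On any such interval $I$ the restriction $R\colon I\to\Theta$ is continuous; its image is connected, and the only connected subsets of a totally disconnected space are singletons, so $R\equiv c$ on $I$ for some $c\in\Theta$. Hence $\widehat{f}=c\,\widehat{h}$ on $I$, and since $\widehat{f}-c\,\widehat{h}$ is entire and vanishes on a set with an accumulation point it vanishes identically. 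Therefore $f=c\,h$ with $c\in\Theta$, which is exactly the conclusion of $\Theta$-PR. This half uses nothing about $\Theta$ beyond total disconnectedness and is the easy direction.

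\emph{Necessity (construction).}
I prove the contrapositive. If $\Theta$ is not totally disconnected, a connected component of $\Theta$ with more than one point is a nondegenerate connected subset of $\T$, hence contains a closed arc $A=\{e^{i\phi}:|\phi-\phi_0|\le\delta\}\subseteq\Theta$ with $\delta>0$. The plan is to realize a nonconstant phase whose image lies inside $A$, by flipping a \emph{near-conjugate} pair of off-axis zeros. Fix $w_1$ in the lower half-plane, a point $w_2$ in the upper half-plane, and let $S=\widehat{s}$ for a smooth bump $s$ supported in $[0,a-\eta]$ (a small $\eta>0$). Set
$$\widehat{h}(z)=(z-w_1)(z-w_2)\,S(z),\qquad \widehat{f}(z)=e^{i\phi_0}(z-\overline{w_1})(z-\overline{w_2})\,S(z).$$
Multiplication by a fixed-degree polynomial corresponds to differentiating $s$, so $f,h$ are smooth and supported in $[0,a-\eta]\subseteq[0,a]$, hence lie in $L^2[0,a]$. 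Since $|\lambda-\overline{w}|=|\lambda-w|$ for real $\lambda$, we get $|\widehat{f}|=|\widehat{h}|$ on $\R$, and wherever $\widehat{h}\neq0$,
$$R(\lambda)=\frac{\widehat{f}(\lambda)}{\widehat{h}(\lambda)}=e^{i\phi_0}\,\frac{(\lambda-\overline{w_1})(\lambda-\overline{w_2})}{(\lambda-w_1)(\lambda-w_2)}.$$
As $w_2\to\overline{w_1}$ this quotient tends to $1$ uniformly on $\R$ (the denominators are bounded below since $|\lambda-w_j|\ge|\im w_j|>0$), so for $w_2$ close enough to $\overline{w_1}$ the admissible phase lies in $A\subseteq\Theta$; at the finitely structured real zeros of $\widehat{h}$ both sides vanish and any $\theta_\lambda\in\Theta$ works. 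Yet for $w_2\neq\overline{w_1}$ the two quadratics have different roots, so $\widehat{f}$ is not a scalar multiple of $\widehat{h}$ and $f\neq\theta h$ for all $\theta\in\T$. Hence $E(\R)$ fails $\Theta$-PR.

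\emph{Main obstacle.}
The genuine difficulty is entirely in the necessity direction: ensuring the flipped function $\widehat{f}$ remains in the Paley--Wiener space of $[0,a]$, i.e. that $f$ keeps its support in $[0,a]$ rather than spilling out. Building $h$ and $f$ from a smooth bump supported in $[0,a-\eta]$ together with a fixed-degree polynomial factor avoids the analytic subtleties of infinite Blaschke products and one-sided Hardy-space membership that a general zero-flipping argument would require; with this choice, verifying the uniform convergence $R\to1$ and the support claim is routine. I would separately confirm the edge case $h=0$, the nonemptiness of $\Theta$, and that a connected component of $\Theta$ with at least two points genuinely contains a closed arc.
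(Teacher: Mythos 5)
Your proposal is correct and follows essentially the same route as the paper: the same Paley--Wiener reformulation, the same sufficiency argument (continuity of the ratio $\widehat{f}/\widehat{h}$ on an interval where $\widehat{h}\neq 0$, connectedness of its image versus total disconnectedness of $\Theta$, then the identity theorem), and the same necessity construction, namely flipping a conjugate pair of off-axis zeros of a product (polynomial)$\,\cdot\,\widehat{s}$ with $s$ a smooth bump, so that the resulting unimodular ratio on $\R$ stays inside an arc contained in $\Theta$ --- indeed the paper's choice $f(z)=(z+iv_1)(z-iv_2)S(z)$, $h(z)=e^{-i\beta}(z-iv_1)(z+iv_2)S(z)$ is exactly your construction with $w_1=-iv_2$, $w_2=iv_1$. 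The only substantive difference is how arc-containment of the phase is verified: the paper pins the zeros to the imaginary axis and computes the exact range $4\arctan\sqrt{v_1/v_2}-\pi$ via the explicit calculus of Lemma \ref{lma:real_line_onto_arc}, whereas you obtain it by a soft uniform-perturbation argument letting $w_2\to\overline{w_1}$, which is equally valid.
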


Next, we develop a Möbius-invariant framework for $\Theta$-PR. Concretely, the upcoming statement shows that $\Theta$-PR is invariant under Möbius transforms that preserve the unit circle $\T$. This invariance under Möbius transforms turns out to be a powerful tool in the study of $\Theta$-PR. We denote by $\mathrm{Aut}(\T)$ the group of bijections on $\T$ and by $M(\Theta)$ the image of $\Theta$ under $M \in \mathrm{Aut}(\T)$.

\begin{theorem}\label{thm:mbs}
    Let $H$ be a Hilbert space. Then for every $\mathbf{G} \subseteq H$ and every $\Theta \subseteq \T$, the following statements are equivalent:
    \begin{enumerate}
        \item $\mathbf{G}$ does $\Theta$-PR.
        \item There exists a Möbius transform $M\in\mathrm{Aut}(\T)$ such that $\mathbf{G}$ does $M(\Theta)$-PR.
        \item For every Möbius transform $M\in\mathrm{Aut}(\T)$ it holds that $\mathbf{G}$ does $M(\Theta)$-PR.
    \end{enumerate}
\end{theorem}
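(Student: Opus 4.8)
The plan is to reduce the three-way equivalence to a single \emph{transport principle}: for any $\Theta\subseteq\T$ and any Möbius transform $M$ preserving $\T$, if $\mathbf{G}$ does $\Theta$-PR then $\mathbf{G}$ does $M(\Theta)$-PR. Granting this, (1)$\Rightarrow$(3) is immediate by applying the principle to each $M\in\mathrm{Aut}(\T)$; (3)$\Rightarrow$(2) is trivial (take $M=\mathrm{id}$); and (2)$\Rightarrow$(1) follows by applying the principle to the set $M(\Theta)$ together with the circle-preserving Möbius transform $M^{-1}$, using $M^{-1}(M(\Theta))=\Theta$. Thus the entire content of the theorem is concentrated in the transport principle.

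To prove it, I would linearize the Möbius action on pairs of vectors. Write $M(z)=\tfrac{az+b}{cz+d}$ with $ad-bc\neq 0$ and $M(\T)=\T$, and suppose $u,v\in H$ satisfy the $M(\Theta)$-hypothesis: for each $g\in\mathbf{G}$ there is $\mu_g=M(\theta_g)\in M(\Theta)$ with $\langle u,g\rangle=\mu_g\langle v,g\rangle$. Define
\[
    f:=du-bv,\qquad h:=-cu+av,
\]
which is precisely the linear action on $(u,v)$ of the matrix representing $M^{-1}$; this map is invertible because $ad-bc\neq 0$. The decisive computation is that, substituting $\langle u,g\rangle=M(\theta_g)\langle v,g\rangle$ and simplifying,
\[
    \langle f,g\rangle=\frac{(ad-bc)\,\theta_g}{c\theta_g+d}\,\langle v,g\rangle,\qquad
    \langle h,g\rangle=\frac{ad-bc}{c\theta_g+d}\,\langle v,g\rangle,
\]
so that $\langle f,g\rangle=\theta_g\langle h,g\rangle$ with the \emph{same} $\theta_g\in\Theta$. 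Hence $(f,h)$ satisfies the $\Theta$-hypothesis.

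Applying the assumed $\Theta$-PR now yields some $\theta\in\Theta$ with $f=\theta h$. Unwinding the definitions of $f$ and $h$ gives $(d+\theta c)u=(\theta a+b)v$, whence $u=M(\theta)v$; since $\theta\in\Theta$ we have $M(\theta)\in M(\Theta)$, which is exactly the conclusion defining $M(\Theta)$-PR. Invertibility of the substitution is what makes the two directions genuinely equivalent rather than one-sided, and it is the reason one may freely pass between $\Theta$ and $M(\Theta)$.

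The step requiring the most care, and the main obstacle, is controlling the degenerate configurations so that the denominators and case distinctions are legitimate. Since $M$ maps $\T$ bijectively onto $\T$, its pole $-d/c$ lies off $\T$, so $c\theta+d\neq 0$ for every $\theta\in\T$; this is exactly what validates both the forward simplification and the back-substitution $u=M(\theta)v$. When $\langle v,g\rangle=0$ the hypothesis forces $\langle u,g\rangle=0$, and then $\langle f,g\rangle=\langle h,g\rangle=0$, so the $\Theta$-equation holds trivially for any admissible $\theta_g$; because the substitution is an invertible linear bijection of $H\times H$, no information is lost and the argument survives the extreme cases $v=0$ or $u=0$ as well. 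Finally, $M(\T)=\T$ guarantees $M(\Theta)\subseteq\T$, so $u=M(\theta)v$ is a bona fide $M(\Theta)$-relation, completing the transport principle and hence the theorem.
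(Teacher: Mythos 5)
Your proof is correct, and its computational core is the same linearization the paper uses: the paper's proof of $(2)\Rightarrow(3)$ takes a witnessing pair $f,h$ and forms $x=af+bh$, $y=cf+dh$ for a Möbius transform $M_0(z)=\frac{az+b}{cz+d}$, observing exactly as you do that the quotient $\langle f,g\rangle/\langle h,g\rangle$ transforms by $M_0$. The logical packaging, however, is genuinely different. The paper argues by contraposition: it transports \emph{failure} of $M(\Theta)$-PR to failure of $M'(\Theta)$-PR, and for this it must invoke Lemma \ref{lma:failure_witnessing} to choose the witnesses $f,h$ linearly independent, so that the transformed pair $x,y$ is again linearly independent and hence still witnesses failure; the paper never needs to invert the conclusion $f=\theta h$. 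You instead transport the \emph{positive} property directly: given the $M(\Theta)$-hypothesis on $(u,v)$ you pull back by the matrix of $M^{-1}$, apply $\Theta$-PR, and then must perform the extra inversion step $(d+\theta c)u=(b+\theta a)v \Rightarrow u=M(\theta)v$, which is legitimate precisely because the pole of $M$ lies off $\T$, so $c\theta+d\neq 0$ — a point you correctly isolate as the delicate one. Your route thus dispenses entirely with the failure-witnessing lemma (and with any linear-independence bookkeeping), at the cost of this one non-vanishing check; the paper's route avoids the inversion but leans on auxiliary structure. Both handle the degenerate case $\langle v,g\rangle=0$ identically, and your reduction of the three-way equivalence to a single transport principle, using that circle-preserving Möbius transforms form a group, is a clean reorganization of the paper's $(1)\Rightarrow(2)$, $(3)\Rightarrow(1)$, $(2)\Rightarrow(3)$ scheme.
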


As mentioned earlier, the property of a system $\mathbf{G}$ doing $\Theta$-PR does not depend on $\Theta$, whenever $|\Theta|\leq 3$. The above theorem showcases the underlying principle governing this property: it is a classical fact that for any two $\Theta,\Theta'\subseteq \T$ with $|\Theta|=|\Theta'|\leq 3$, there exists a Möbius transform $M\in \text{Aut}(\T)$ such that $M(\Theta)=\Theta'$.
For sets $\Theta$ consisting of four elements the property of doing $\Theta$-PR is governed by projective invariants such as the cross ratio. Recall that the cross ratio of pairwise distinct $z_1,z_2,z_3,z_4 \in \C$ is defined by
$$
\cratio(z_1,z_2;z_3,z_4) = \frac{(z_1-z_3)(z_2-z_4)}{(z_1-z_4)(z_2-z_3)}.
$$
With the help of Theorem \ref{thm:mbs} we are able to deduce that if $\Theta,\Theta'\subseteq \T$ satisfy $|\Theta|=|\Theta'|=4$ and $\cratio(\Theta)=\cratio(\Theta')$, then a system $\mathbf{G}$ does $\Theta$-PR if and only if it does $\Theta'$-PR (see Corollary \ref{cor:4_pr_and_cr}).
On the other hand, we will show in Section \ref{sec:c2_characterization} that in general the property of a system doing $\Theta$-PR is dependent on the choice of $\Theta$, whenever $|\Theta|\geq 4$. In particular there exist $\Theta$,$\Theta'$ and $\mathbf{G}$ with $|\Theta|=|\Theta'|=4$ and $\cratio(\Theta) \neq \cratio(\Theta')$ such that $\mathbf{G}$ does $\Theta$-PR but not $\Theta'$-PR.

The remaining results focus on the finite dimensional setting $H=\C^d$ which constitutes an active and extensively studied research area in phase retrieval \cite{conca2015algebraic,balan2006signal,balan2009painless,xia2025stability}. A central question in this setting is the minimality problem: determine the smallest $m$ for which there exists a system $\mathbf{G}=\{g_j\}_{j=1}^m\subseteq \C^d$ that does phase retrieval in $\Cd$ (or analogously real phase retrieval in $\R^d$). Motivated by this question, we define the minimality quantity $\mathcal{N}(\C^d,\Theta)$ via
$$
\mathcal{N}(\C^d,\Theta) = \min \left \{ m \in \N :  \exists \mathbf{G} \subseteq \C^d \ \text{with} \ |\mathbf{G}| = m \ \text{that does} \ \Theta\text{-PR} \ \text{in} \ \C^d \right \}.
$$
A closely related line of research studies genericity: for a fixed $m$, how large is the collection of systems $\{g_j\}_{j=1}^m$ that do phase retrieval, and does a random or generic choice already guarantee this property? In this section we address these questions for $\Theta$-PR. We begin with necessary conditions, showing that $\Theta$-PR cannot hold below certain thresholds.

\begin{theorem}\label{thm:failure}
    Let $d \geq 2$ and let $\mathbf{G} = \{ g_j \}_{j=1}^m \subseteq \C^d$. 
    \begin{enumerate}
        \item If $m \leq 2d-2$, then $\mathbf{G}$ fails $\Theta$-PR for every $\Theta \subseteq \T$ with $|\Theta| \geq 2.$
        \item If $m=2d-1$, then $\mathbf{G}$ fails $\Theta$-PR for every $\Theta \subseteq \T$ with $|\Theta| \geq 3$.
    \end{enumerate}
\end{theorem}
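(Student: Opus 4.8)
The plan is to prove both parts by exhibiting, for any admissible $\mathbf{G}$, an explicit pair $f,h$ witnessing the failure of $\Theta$-PR, and to produce these witnesses through the cover characterization of Theorem~\ref{thm:m1}. Since completeness is necessary for $\Theta$-PR, I may assume throughout that $\mathbf{G}$ is complete, i.e.\ $\lspan\mathbf{G}=\C^d$; otherwise the failure is immediate. Under this assumption every witness reduces to the following geometric task: cover the $m$ vectors of $\mathbf{G}$ by hyperplanes $x_j^\perp$ whose normals $x_j$ satisfy the linear relations of Theorem~\ref{thm:m1}, with $x_1,x_2$ linearly independent. Fixing distinct phases $\theta_1,\theta_2$ (and, in part (2), a third distinct $\theta_3$) from $\Theta$, the vectors $x_j=f-\theta_j h$ translate the containment $\mathbf{G}_j\subseteq x_j^\perp$ into the phase relation $\langle f,g\rangle=\theta_j\langle h,g\rangle$, while linear independence of $x_1,x_2$ forces $f,h$ to be linearly independent, hence $f\neq\theta h$ for every $\theta\in\C$.

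For part (1), I would split $\mathbf{G}$ into two blocks $\mathbf{G}_1,\mathbf{G}_2$ of size at most $d-1$ each, which is possible precisely because $m\le 2d-2$. Then $\dim\mathbf{G}_j^\perp\ge 1$, so I can pick nonzero $x_1\in\mathbf{G}_1^\perp$ and $x_2\in\mathbf{G}_2^\perp$; these are automatically linearly independent because $\mathbf{G}_1^\perp\cap\mathbf{G}_2^\perp=(\lspan\mathbf{G})^\perp=\{0\}$ by completeness. Taking all remaining blocks $\mathbf{G}_j$ ($j\ge 3$) empty and defining $x_j$ for $j\ge 3$ by the relation in Theorem~\ref{thm:m1} (these are nonzero affine combinations of the independent pair $x_1,x_2$ and lie in $\emptyset^\perp=H$ trivially), all hypotheses of Theorem~\ref{thm:m1} are met and $\mathbf{G}$ fails $\Theta$-PR. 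This simultaneously recovers the complement-property obstruction of Proposition~\ref{prop:cp} and shows the witness is valid for the full set $\Theta$, not merely for the two chosen phases.

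For part (2) the extra vector obstructs a two-block split, so I would use three blocks together with the relation $x_3=a\,x_1+b\,x_2$, where $a=\frac{\theta_2-\theta_3}{\theta_2-\theta_1}$ and $b=\frac{\theta_3-\theta_1}{\theta_2-\theta_1}$ are both nonzero because $\theta_1,\theta_2,\theta_3$ are distinct; this is exactly where $|\Theta|\ge 3$ enters. The core is a covering lemma: any complete family of $2d-1$ vectors lies in a union $x_1^\perp\cup x_2^\perp\cup x_3^\perp$ with $x_1,x_2$ independent and $x_3=a x_1+b x_2$. I would build it as follows. Choose a linearly independent $I_A\subseteq\mathbf{G}$ of size $d-1$ spanning a hyperplane $P_A$; by completeness some remaining vector $g_r$ lies outside $P_A$, so I place $g_r$ together with $d-2$ further remaining vectors into a block $I_B$ of size $d-1$, leaving a single vector $g_{i_0}$. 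Setting $x_1\in P_A^\perp\setminus\{0\}$ and choosing $x_2\in\lspan(I_B)^\perp$, the containment $g_r\notin P_A$ yields $\lspan(I_B)\not\subseteq P_A$, which forces $x_1\notin\lspan(I_B)^\perp$ and hence makes $x_1,x_2$ independent for every nonzero $x_2$ in that complement.

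It remains to cover the leftover vector $g_{i_0}$, and this is the step I expect to be the main obstacle, since it couples the two otherwise-free normals. I would handle it by cases according to the position of $g_{i_0}$: if $g_{i_0}\in\lspan(I_B)$ then $g_{i_0}\perp x_2$ automatically and $g_{i_0}\in x_2^\perp$; if $g_{i_0}\perp x_1$ then $g_{i_0}\in x_1^\perp$; and otherwise the single scalar equation $a\langle x_1,g_{i_0}\rangle+b\langle x_2,g_{i_0}\rangle=0$ can be solved by rescaling $x_1,x_2$ within their respective complements without creating a zero vector or a proportionality, placing $g_{i_0}\in x_3^\perp$. Conceptually this works because the three coplanar normals correspond, after passing to the quotient $\C^d/\lspan(x_1,x_2)^\perp\cong\C^2$, to three hyperplanes sharing a common codimension-two subspace, and the construction arranges the $2d-1$ projected points to occupy at most three directions in $\C^2$. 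With the covering in hand, Theorem~\ref{thm:m1} (with the surplus blocks empty) delivers the witness $f,h$ and the failure of $\Theta$-PR.
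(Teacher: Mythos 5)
Your overall strategy matches the paper's: part (1) is the complement-property obstruction (two incomplete blocks), and part (2) is a three-block cover fed into the cover characterization. Within part (2) your implementation differs in two respects that are worth noting. The paper takes the \emph{fixed} blocks $\{g_1,\dots,g_{d-1}\}$, $\{g_d,\dots,g_{2d-2}\}$, $\{g_{2d-1}\}$ and splits into a dichotomy: if every choice of annihilators $x_1,x_2$ is dependent, it falls back on failure of the complement property (so it never needs your WLOG completeness reduction in part (2)); otherwise it invokes Corollary \ref{cor:3_pr_characterization}, which only requires $x_3\in\lspan\{x_1,x_2\}$ with \emph{arbitrary} coefficients -- the rescaling needed to match the exact coefficients of Theorem \ref{thm:m1} is encapsulated once and for all inside that corollary. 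You instead build the blocks adaptively (forcing $g_r\notin P_A$ into $I_B$ to guarantee independence of $x_1,x_2$ outright) and normalize the coefficients $a,b$ by hand; both routes are legitimate, and your observation that linear independence of $f,h$ makes the witness valid for every $\theta\in\C$, hence for arbitrary (even uncountable) $\Theta$, is exactly the mechanism the paper uses via Lemma \ref{lemma:theta_pr_implies_theta'_pr}.

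There is one slip in your case analysis for the leftover vector $g_{i_0}$. Your three cases are keyed to $g_{i_0}\in\lspan(I_B)$, $g_{i_0}\perp x_1$, and ``otherwise,'' but when $\dim\lspan(I_B)\le d-2$ the residual case can contain a vector with $g_{i_0}\notin\lspan(I_B)$, $\langle x_1,g_{i_0}\rangle\neq 0$, and yet $\langle x_2,g_{i_0}\rangle=0$ for your chosen $x_2$; there the scalar equation $as\langle x_1,g_{i_0}\rangle+bt\langle x_2,g_{i_0}\rangle=0$ reduces to $as\langle x_1,g_{i_0}\rangle=0$ and admits no solution with $s\neq 0$, so the rescaling step fails as written. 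The repair is immediate and stays inside your framework: organize the cases by orthogonality rather than span membership. If $\langle x_2,g_{i_0}\rangle=0$ put $g_{i_0}$ into $\mathbf{G}_2$ (membership $x_2\in\mathbf{G}_2^\perp$ is what matters, and it holds); if $\langle x_1,g_{i_0}\rangle=0$ put it into $\mathbf{G}_1$; in the remaining case both inner products are nonzero and $t=-a\langle x_1,g_{i_0}\rangle/(b\langle x_2,g_{i_0}\rangle)$ is a well-defined nonzero scalar, so replacing $x_2$ by $tx_2$ preserves nonvanishing, membership in $\lspan(I_B)^\perp$, and independence from $x_1$, and places $g_{i_0}$ in $x_3^\perp$ with $x_3=ax_1+btx_2\neq 0$. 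With this correction the proof is complete; note this is precisely the contingency the paper's Corollary \ref{cor:3_pr_characterization} absorbs, since requiring only $x_3\in\lspan\{x_1,x_2\}$ lets the degenerate coefficient cases collapse into the complement-property argument instead of the rescaling.
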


Theorem \ref{thm:failure} identifies a regime in which $\Theta$-PR is impossible, regardless of the choice of vectors $\{ g_j \}_{j=1}^m$. Beyond this regime, the situation changes noticeably. For instance, it follows from Proposition \ref{prop:cp} that if $|\Theta|=2$ and $m \geq 2d-1$, every full-spark system $\{ g_j \}_{j=1}^m$ does $\Theta$-PR in $\C^d$. Recall that a system is called full-spark if every subset of $d$ vectors is linearly independent. It is known that for fixed $d$ and $m \geq 2d-1$, the collection of full-spark systems is generic in an algebraic sense: it forms a non-empty Zariski-open subset of $\C^{d \times m} \simeq \C^{dm}$ \cite{alexeev2012full} (see Section \ref{sec:zariski_topology} for details on Zariski topology). This implies that $\mathcal{N}(\C^d,\Theta) = 2d-1$. 

The case $\Theta = \T$, corresponding to classical phase retrieval, is substantially more delicate. It is known that the collection of systems $\{ g_j \}_{j=1}^m \subseteq \C^d$ with $m \geq 4d-4$ doing phase retrieval contains a non-empty Zariski-open set \cite{conca2015algebraic}, although the exact minimal number $\mathcal{N}(\C^d,\T)$ remains unknown for general $d \in \N$. Using embedding results for complex projective spaces, it has been shown that the bound $4d-4$ is optimal up to a logarithmic term in the dimension $d$ \cite{heinosaari2013quantum}. However, it is not sharp, i.e., there exists $d$ for which $\mathcal{N}(\C^d,\T) < 4d-4$ \cite{vinzant2015small}.

The following theorem demonstrates that, once the failure regime of Theorem \ref{thm:failure} is exceeded, $\Theta$-PR becomes a generic phenomenon. More precisely, for finite $\Theta$ the property holds on a non-empty Zariski-open set, while for countably infinite $\Theta$ it holds on a dense (in Euclidean topology) $G_\delta$-set of full Lebesgue measure. This demonstrates that in both the finite and countable case, the systems doing $\Theta$-PR is a rich class. In particular, such systems occur with probability one when the vectors are drawn from any continuous probability distribution.

\begin{theorem}\label{thm:generic_2d}
    Let $d\geq 2$, let $m \geq 2d$, and let $\Theta\subseteq \T$. Moreover, let $\mathcal{C} \subseteq \C^{d \times m}$ denote all systems $\{ g_j \}_{j=1}^m \simeq (g_1, \dots, g_m) \in \C^{d \times m}$ doing $\Theta$-PR in $\C^d$.
    \begin{enumerate}
        \item If $\Theta$ is finite then $\mathcal{C}$ contains a non-empty Zariski-open set.
        \item If $\Theta$ is countable then $\mathcal{C}$ contains a dense $G_\delta$-set of full Lebesgue measure.
    \end{enumerate}
\end{theorem}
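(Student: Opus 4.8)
The plan is to reduce the statement to a dimension count on an incidence variety, using the failure characterization of Theorem \ref{thm:m1} (and, for finite $\Theta$, its finite analogue, obtained by the same decomposition $x_j = f - \theta_j h$). Writing $\Theta = \{\theta_j\}$ with $\theta_1 \neq \theta_2$, set $a_j = \frac{\theta_j - \theta_1}{\theta_2 - \theta_1}$ and $b_j = \frac{\theta_2 - \theta_j}{\theta_2 - \theta_1}$, so that the vector $x_j$ of Theorem \ref{thm:m1} is $x_j = a_j x_2 + b_j x_1$. Since $a_j, b_j \neq 0$ for $j \geq 3$ and $x_1, x_2$ are linearly independent, every $x_j$ is automatically nonzero, and declaring $\mathbf{G}_j := \{g \in \mathbf{G} : \langle g, x_j\rangle = 0\}$ turns conditions (1)--(3) into the single requirement that $x_1, x_2$ be linearly independent and that every $g_i$ be orthogonal to at least one $x_j$. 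Hence $\mathbf{G} = (g_1, \dots, g_m)$ fails $\Theta$-PR if and only if $\mathbf{G} \in \bigcup_\sigma \mathcal{B}_\sigma$, where $\sigma$ runs over all maps from $\{1, \dots, m\}$ to the index set of $\Theta$ and
\[
\mathcal{B}_\sigma = \{ \mathbf{G} \in \C^{d\times m} : \exists\, x_1, x_2 \text{ lin.\ indep.\ with } \langle g_i, x_{\sigma(i)} \rangle = 0\ \forall i,\ x_j := a_j x_2 + b_j x_1 \}.
\]

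The inner products are conjugate-linear in $x$, so to stay in the algebraic category I would substitute $\tilde x_j := \overline{x_j}$; then $\langle g_i, x_{\sigma(i)}\rangle = \sum_k (g_i)_k (\tilde x_{\sigma(i)})_k =: [g_i, \tilde x_{\sigma(i)}]$ is complex-bilinear in $(\mathbf{G}, \tilde x)$, while linear independence and the relation $\tilde x_j = \overline{a_j}\,\tilde x_2 + \overline{b_j}\,\tilde x_1$ are preserved. Since the equations $[g_i, \tilde x_{\sigma(i)}] = 0$ are homogeneous of degree one in $\tilde x = (\tilde x_1, \tilde x_2)$ and linear independence is scale invariant, I would form the incidence variety
\[
\tilde Z_\sigma = \{ ([\tilde x], \mathbf{G}) \in \mathbb{P}^{2d-1} \times \C^{d \times m} : \tilde x_1, \tilde x_2 \text{ lin.\ indep.},\ [g_i, \tilde x_{\sigma(i)}] = 0\ \forall i \},
\]
which is locally closed algebraic. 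Over a fixed point $[\tilde x]$ of the open linear-independence locus all $\tilde x_j$ are nonzero, so each equation $[g_i, \tilde x_{\sigma(i)}] = 0$ cuts a genuine hyperplane in the $g_i$-coordinate and the fiber has dimension $m(d-1) = dm - m$. As the base $\mathbb{P}^{2d-1}$ has dimension $2d-1$, Chevalley's theorem gives that the projection $\mathcal{B}_\sigma$ to $\C^{d\times m}$ is constructible with
\[
\dim \mathcal{B}_\sigma \leq (2d-1) + (dm - m) = dm - (m - 2d + 1) \leq dm - 1,
\]
the last inequality being exactly where the hypothesis $m \geq 2d$ enters.

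It remains to assemble the pieces. If $\Theta$ is finite there are finitely many $\sigma$, so $\mathcal{B} = \bigcup_\sigma \mathcal{B}_\sigma$ is constructible of dimension $\leq dm-1$; its Zariski closure is a proper algebraic subset of $\C^{d\times m}$ and its complement is a non-empty Zariski-open set contained in $\mathcal{C}$, giving (1). If $\Theta$ is countable there are countably many $\sigma$, and each $\overline{\mathcal{B}_\sigma}$ is a proper algebraic set, hence Lebesgue-null and nowhere dense; thus $\bigcup_\sigma \overline{\mathcal{B}_\sigma}$ is a meager $F_\sigma$-set of measure zero, and its complement (a dense $G_\delta$-set of full measure, by Baire) lies in $\mathcal{C}$, giving (2). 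The degenerate cases $|\Theta| \leq 1$ reduce to plain completeness, which is generic for $m \geq d$.

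I expect the main obstacle to be pinning down the sharp dimension count: the estimate $\dim \mathcal{B}_\sigma \leq dm-1$ holds at the threshold $m = 2d$ only after projectivizing the parameter $(x_1, x_2)$, i.e.\ after exploiting that the orthogonality conditions depend on $x$ only up to a common scalar. Getting the fiber dimension to be exactly $dm - m$ (rather than larger) likewise requires that all derived vectors $x_j$ be nonzero, which is where linear independence of $x_1, x_2$ together with $\theta_j \neq \theta_1, \theta_2$ is used. The conjugation issue, a potential source of trouble for the Zariski-topology statement in (1), is dispatched cleanly by the substitution $\tilde x = \overline{x}$, which restores complex-algebraicity of the incidence variety.
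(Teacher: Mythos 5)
Your proof is correct, and it takes a genuinely different route from the paper's. The paper reduces to $m=2d$ by padding, fixes a phase pattern $\theta\in\Theta^{2d}$, and for each pattern exhibits an \emph{explicit} Zariski-open set: when some phase repeats at least $d$ times, the full-spark locus; otherwise the locus where the $2d\times 2d$ matrix $\left( F^{T}, -D(\bar{\theta})F^{T} \right)$ is invertible, with non-emptiness established by a combinatorial pairing lemma (Lemma \ref{lma:graph_theoretical}, a matching-type statement) and an explicit block-determinant computation giving $\prod_{r}(\theta_{j_r}-\theta_{k_r})\neq 0$; intersecting the finitely many $O_\theta$ yields part (1). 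You instead encode failure, via Theorem \ref{thm:m1}, as membership in $\bigcup_\sigma \mathcal{B}_\sigma$ and kill each $\mathcal{B}_\sigma$ by a soft dimension count on an incidence variety. Your preliminary reduction is sound: for $j\geq 3$ the vectors $x_j=a_jx_2+b_jx_1$ are automatically nonzero once $x_1,x_2$ are independent (and empty sets are permitted in a cover, so conditions (1)--(3) of Theorem \ref{thm:m1} do collapse to ``every $g_i$ is orthogonal to some $x_{\sigma(i)}$''); the substitution $\tilde{x}=\bar{x}$ correctly restores complex-algebraicity; and the fiber bound $\dim\mathcal{B}_\sigma\leq (2d-1)+m(d-1)\leq dm-1$ is valid, with the projectivization of $(x_1,x_2)$ being exactly what saves the count at the threshold $m=2d$, as you observe. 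What each approach buys: the paper's argument is elementary and constructive --- no Chevalley theorem or fiber-dimension semicontinuity, and a concrete polynomial (a determinant) certifying genericity for each pattern --- while yours dispenses with the matching combinatorics and the reduction to $m=2d$, treats all phase assignments uniformly, and makes the origin of the threshold $m\geq 2d$ transparent, at the cost of invoking standard algebraic-geometry machinery (constructibility of images, the fiber-dimension bound, and $\dim\overline{\mathcal{B}_\sigma}=\dim\mathcal{B}_\sigma$). Part (2) is structurally the same Baire/null-set assembly in both proofs, yours over the countably many $\sigma\in\N^{m}$, the paper's over the finite truncations $\Theta_n$ combined with part (1). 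One minor point worth recording: Theorem \ref{thm:m1} is stated for $\Theta$ indexed by $\N$ with $\theta_1\neq\theta_2$, but your appeal to a ``finite analogue'' is unproblematic --- either enumerate a finite $\Theta$ with repetitions (repeated phases give $x_j\in\{x_1,x_2\}$, still nonzero) or note that the proof never uses infinitude of $\Theta$, as the paper itself does when applying it to three-element sets in Corollary \ref{cor:3_pr_characterization}; your separate treatment of $|\Theta|\leq 1$ via generic completeness is also correct.
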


We note that in the literature, a statement as the first one in Theorem \ref{thm:generic_2d} is commonly described by saying that $\Theta$-PR holds generically, or more precisely, Zariski-generically, meaning that it holds on a non-empty Zariski-open subset of $\C^{d \times m}$. Theorem \ref{thm:failure} and Theorem \ref{thm:generic_2d} imply the following result.

\begin{corollary}\label{cor:minimality_theta_pr}
    Let $d\geq 2$ and let $\Theta\subseteq \T$ be countable. Then
    $$
    \mathcal{N}(\C^d,\Theta) = \begin{cases}
        d, & |\Theta| = 1 \\
        2d-1, & |\Theta| = 2 \\
        2d, & |\Theta| \geq 3
    \end{cases}.
$$
\end{corollary}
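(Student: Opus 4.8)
The plan is to establish matching lower and upper bounds for $\mathcal{N}(\C^d,\Theta)$ in each of the three cardinality regimes $|\Theta|=1$, $|\Theta|=2$, and $|\Theta|\geq 3$; since Theorem~\ref{thm:failure} and Theorem~\ref{thm:generic_2d} are already at our disposal, the argument amounts to assembling these with an elementary completeness computation for the singleton case. For $|\Theta|=1$, say $\Theta=\{\theta\}$, I would first note that $\Theta$-PR collapses to a linear determination problem: the hypothesis $\langle f,g\rangle=\theta\langle h,g\rangle$ for all $g\in\mathbf{G}$ is equivalent to $f-\theta h\perp\mathbf{G}$, so the forced conclusion $f=\theta h$ holds for all $f,h$ exactly when $\mathbf{G}$ is complete, i.e.\ spans $\C^d$. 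The minimal size of a spanning set of $\C^d$ is $d$, realized by any basis, whence $\mathcal{N}(\C^d,\Theta)=d$.

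For $|\Theta|=2$ the lower bound $\mathcal{N}(\C^d,\Theta)\geq 2d-1$ is immediate from Theorem~\ref{thm:failure}(1). For the upper bound I would invoke Proposition~\ref{prop:cp} and exhibit a full-spark system of size $m=2d-1$: in any splitting $\mathbf{G}=\mathbf{S}\cup(\mathbf{G}\setminus\mathbf{S})$ one part contains at least $d$ vectors, and since any $d$ vectors of a full-spark system are linearly independent and hence complete in $\C^d$, the complement property holds. As full-spark systems of $2d-1$ vectors exist in $\C^d$, this produces a system doing $\Theta$-PR and yields $\mathcal{N}(\C^d,\Theta)=2d-1$.

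For $|\Theta|\geq 3$, combining parts (1) and (2) of Theorem~\ref{thm:failure} shows that every system of size $m\leq 2d-1$ fails $\Theta$-PR, giving $\mathcal{N}(\C^d,\Theta)\geq 2d$. The matching upper bound follows from Theorem~\ref{thm:generic_2d} applied with $m=2d$: when $\Theta$ is finite the set $\mathcal{C}$ of systems doing $\Theta$-PR contains a non-empty Zariski-open set, and when $\Theta$ is countably infinite it contains a dense $G_\delta$-set of full Lebesgue measure. In either case $\mathcal{C}\neq\emptyset$, so a system of $2d$ vectors doing $\Theta$-PR exists and $\mathcal{N}(\C^d,\Theta)=2d$.

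I expect no serious obstacle, as the essential content is carried by the cited theorems; the two points demanding attention are organizational. First, the $|\Theta|=2$ upper bound is \emph{not} a special case of Theorem~\ref{thm:generic_2d}, which is stated only for $m\geq 2d$, and must therefore be derived directly from the complement-property characterization. Second, Theorem~\ref{thm:failure} is available only for $|\Theta|\geq 2$, which is precisely why the singleton regime has to be handled separately through the completeness argument above.
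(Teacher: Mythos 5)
Your proposal is correct and takes essentially the same route as the paper: lower bounds from Theorem~\ref{thm:failure} (with the singleton case reduced to completeness as in Lemma~\ref{lma:completeness}), the $|\Theta|=2$ upper bound via full-spark systems and Proposition~\ref{prop:cp} exactly as sketched in the paper's introduction, and the $|\Theta|\geq 3$ upper bound from Theorem~\ref{thm:generic_2d} with $m=2d$, where countability of $\Theta$ is used. Your two organizational caveats — that the $2d-1$ bound is not a special case of Theorem~\ref{thm:generic_2d}, and that Theorem~\ref{thm:failure} does not cover singletons — are accurate and are handled identically (if implicitly) by the paper.
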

This completely resolves the minimality problem for all countable sets $\Theta$. Moreover, for every $m \geq \mathcal{N}(\C^d,\Theta)$, the collection $\mathcal{C}$ of systems with $m$ elements doing $\Theta$-PR is dense in $\C^{d \times m}$. Hence, $\mathcal{C}$ is either the empty set (for $m < \mathcal{N}(\C^d,\Theta)$) or it is dense. Whether a similarly sharp transition occurs in the case $\Theta = \T$ remains an open problem in the literature (see, for example, \cite[Section 2]{cahill2016phase}).

Note that Corollary \ref{cor:minimality_theta_pr} applies to every countable set $\Theta$, including sets that are dense in $\T$. However, the situation changes when $\Theta$ is uncountable. Consider for instance the case when $\Theta$ contains a non-trivial arc of the form $\{ e^{it} : t \in [a,b] \}$ where $a<b$. In this case, $\Theta$-PR in finite dimensions turns out to be equivalent to PR. In particular, the minimality quantity $\mathcal{N}(\C^d,\Theta)$ jumps from $2d$ in the countable case to approximately $4d$.
\begin{proposition}\label{prop:arc}
Suppose that $\Theta \subseteq \T$ contains a non-trivial arc. Then $\mathbf{G}\subseteq \C^d$ does $\Theta$-PR if and only if it does PR. In particular,
$$
\mathcal{N}(\C^d,\Theta) \geq 4d -4 - 2\log_2(d).
$$
\end{proposition}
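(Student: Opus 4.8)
The plan is to prove the two implications of the equivalence separately; the forward implication is soft, while the reverse one — where the arc hypothesis enters — rests on a deformation argument. For the forward direction, suppose $\mathbf{G}$ does PR and that $\langle f,g\rangle=\theta_g\langle h,g\rangle$ with $\theta_g\in\Theta\subseteq\T$ for all $g$. Then $|\langle f,g\rangle|=|\langle h,g\rangle|$, so PR yields $f=\theta h$ with $\theta\in\T$. Since PR forces completeness of $\mathbf{G}$, I can upgrade $\theta$ to lie in $\Theta$: either $h=0$ (then $f=0$ and any element of $\Theta$ works) or there is a $g$ with $\langle h,g\rangle\neq0$, and comparing $\theta_g\langle h,g\rangle=\langle f,g\rangle=\theta\langle h,g\rangle$ gives $\theta=\theta_g\in\Theta$. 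This direction uses nothing about the arc and holds for any nonempty $\Theta$.

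For the reverse direction I argue by contraposition: assuming $\mathbf{G}$ fails PR, I produce a failure of $\Theta$-PR. Passing to $a=(f+h)/2$ and $b=(f-h)/2$ turns the magnitude identity into $\re\langle a,g\rangle\overline{\langle b,g\rangle}=0$ and turns $\T$-parallelism into linear dependence, so failure of PR is equivalent to the existence of linearly independent $a,b\in\C^d$ with $\re\langle a,g\rangle\overline{\langle b,g\rangle}=0$ for every $g\in\mathbf{G}$. Given such a pair I deform along $f_s=a+sb$, $h_s=a-sb$ for $s\in\R$: the orthogonality relation forces $|\langle f_s,g\rangle|=|\langle h_s,g\rangle|$ for all real $s$, so the phases $\phi_g(s)=(\langle a,g\rangle+s\langle b,g\rangle)/(\langle a,g\rangle-s\langle b,g\rangle)$ lie on $\T$, while $f_s,h_s$ are linearly independent — hence never $\Theta$-parallel — whenever $s\neq0$.

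It remains to make all phases land in the arc $A\subseteq\Theta$ simultaneously for a single small $s\neq0$. The guiding picture is that $\phi_g(s)\to1$ as $s\to0$, so after rotating $h_s\mapsto e^{i\psi}h_s$ (which multiplies every phase by $e^{-i\psi}$) with $e^{-i\psi}$ in the interior of $A$, the rotated phases are eventually trapped in $A$. The main obstacle is that this convergence is uniform only at those $g$ with $\langle a,g\rangle\neq0$: if some $g\perp a$, then $\phi_g(s)\equiv-1$ is stuck at the antipode and cannot be pushed into a short arc. The heart of the proof is therefore to select the witness $(a,b)$ to be \emph{non-degenerate}, with $\langle a,g\rangle\neq0$ for every $g\in\mathbf{G}$. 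For finite $\mathbf{G}$ this can be arranged by a genericity argument within the witness family — for fixed $b$ the valid $a$ form a positive-dimensional real subspace, and a subspace is never covered by the finitely many proper subspaces $g^\perp$ — and perturbing $b$ if needed to preserve independence. With such a witness $\phi_g(s)\to1$ uniformly over the finite set $\mathbf{G}$, and choosing $s\neq0$ small (and off the finitely many values where some $\langle h_s,g\rangle$ vanishes) completes the construction of a $\Theta$-PR counterexample. This non-degeneracy step is the one I expect to require the most care.

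Finally, the density bound is immediate from the equivalence: since doing $\Theta$-PR and doing PR coincide system-by-system, the corresponding minimal cardinalities agree, so $\mathcal{N}(\C^d,\Theta)=\mathcal{N}(\C^d,\T)$. The stated inequality then follows from the known lower bound for complex phase retrieval, namely $\mathcal{N}(\C^d,\T)\geq 4d-4-2\log_2(d)$, which arises from the nonembeddability obstruction of \cite{heinosaari2013quantum}.
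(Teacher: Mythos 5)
Your proof is correct in substance, but the reverse implication takes a genuinely different route from the paper. The paper argues softly: since $\mathbf{G}\subseteq\C^d$ is finite, a witness of PR-failure involves only finitely many phases, which therefore lie in some proper closed arc $A$; Lemma \ref{lma:arc_bijection} produces a Möbius transform in $\mathrm{Aut}(\T)$ carrying $A$ onto the arc contained in $\Theta$, and the Möbius invariance of $\Theta$-PR (Theorem \ref{thm:mbs}) transfers the failure, with Lemma \ref{lemma:theta_pr_implies_theta'_pr} finishing both directions. Your deformation $(f_s,h_s)=(a+sb,\,a-sb)$ is in effect a hands-on rediscovery of that invariance: it is a one-parameter family of linear changes of witness of exactly the kind used in the proof of Theorem \ref{thm:mbs}, and your observation that the constraint $\re\langle a,g\rangle\overline{\langle b,g\rangle}=0$ forces $\langle b,g\rangle/\langle a,g\rangle\in i\R$, so that $\phi_g(s)=(1+is\tau_g)/(1-is\tau_g)$ with $\tau_g\in\R$ stays on $\T$ and tends to $1$, is the concrete mechanism by which the Möbius map compresses all phases into a short arc. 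What your route buys is an explicit, self-contained counterexample construction; what the paper's route buys is brevity and, importantly, immunity to degeneracy — the Möbius transform handles the vectors $g\perp a$ (phase stuck at $-1$) automatically, which is precisely where your argument needs extra work.

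That extra work — the non-degeneracy step you flagged — is stated slightly too strongly and needs a patch. Your genericity claim requires each $g^\perp\cap V_b$ to be a \emph{proper} subspace of $V_b=\{a:\re\langle a,g\rangle\overline{\langle b,g\rangle}=0 \text{ for all } g\in\mathbf{G}\}$, and this can fail when $\langle b,g\rangle=0$, so one cannot in general force $\langle a,g\rangle\neq0$ for \emph{every} $g\in\mathbf{G}$. The repair is twofold. First, note that $ib\in V_b$ always, since $\re\bigl(i|\langle b,g\rangle|^2\bigr)=0$; hence for every $g$ with $\langle b,g\rangle\neq0$ one has $ib\notin g^\perp$, so $g^\perp\cap V_b$ is proper in $V_b$, and likewise $\C b\cap V_b$ is proper because your original witness $a_0$ lies in $V_b\setminus\C b$. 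A real vector space is never a finite union of proper subspaces, so you may choose $a\in V_b$, independent of $b$ over $\C$, with $\langle a,g\rangle\neq0$ whenever $\langle b,g\rangle\neq0$ (equivalently: replace $a$ by $a+tib$ for generic small real $t$, which stays in $V_b$, preserves independence, and kills all stuck phases at once — no perturbation of $b$ is needed). Second, the remaining $g$ with $\langle b,g\rangle=0$ are harmless: if also $\langle a,g\rangle=0$ the phase at $g$ is unconstrained, and otherwise $\phi_g(s)\equiv1$; only the case $\langle a,g\rangle=0\neq\langle b,g\rangle$ must be excluded, and the above excludes it. With this repair your construction is complete, and your derivation of the bound via $\mathcal{N}(\C^d,\Theta)=\mathcal{N}(\C^d,\T)$ together with the lower bound of \cite{heinosaari2013quantum} coincides with the paper's.
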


\textbf{Outline.}
In Section 2 we revisit completeness and overcompleteness, and explain how they interact with $\Theta$-PR. We also treat the two-phase regime $|\Theta|=2$, proving that $\Theta$-PR is equivalent to the complement property (Proposition \ref{prop:cp}).

Section 3 develops the theory for countable phase sets $\Theta$: we prove the general characterization of failure of $\Theta$-PR via covers and orthogonality relations (Theorem \ref{thm:m1}) and derive Proposition \ref{prop:recurrence} that treats cyclic phase sets. We then apply this framework to exponential systems and obtain a sharp lattice density criterion for $\Theta$-PR in $L^2[0,a]$ (Theorem \ref{thm:lattice_density_characterization}). In addition, we provide a proof of Theorem \ref{thm:ER_disconnected}.

In Section 4 we introduce the Möbius-invariant perspective. We prove that $\Theta$-PR is preserved under Möbius transforms mapping $\T$ onto itself (Theorem \ref{thm:mbs}) and relate changes in $\Theta$ to projective invariants such as cross ratios.

Finally, Section 5 focuses on the finite-dimensional setting $H=\C^d$. We first analyze $\Theta$-PR in $\C^2$ in terms of explicit algebraic and geometric conditions. We then address minimality and genericity in higher dimensions: we prove sharp impossibility thresholds (Theorem \ref{thm:failure}), show that $\Theta$-PR becomes generic once $m \geq 2d$ for finite and countable $\Theta$ (Theorem \ref{thm:generic_2d}), and conclude with the behavior for uncountable $\Theta$, including the arc case where $\Theta$-PR reduces to standard PR (Proposition \ref{prop:arc}).

\section{Completeness, overcompleteness, and complement property}\label{sec:2}

\subsection{Completeness and overcompleteness}\label{sec:completness_and_overcompleteness}

Unless specified otherwise, $H$ denotes a complex Hilbert space with $\dim(H)\geq 2$, equipped with a scalar product $\langle \cdot, \cdot \rangle$ that is linear in the first argument and conjugate linear in the second argument. Furthermore, whenever we write $\Theta \subseteq \T$ and $\mathbf{G}\subseteq H$ we assume that $\Theta \neq \varnothing$ and $\mathbf{G}\neq \emptyset$, respectively. For a subset $\mathbf{G} \subseteq H$ we denote by $\lspan(\mathbf{G})$ the linear span of $\mathbf{G}$, consisting of all finite linear combinations of elements in $\mathbf{G}$. Recall that $\mathbf{G}$ is said to be complete in $H$ if the closure of $\lspan(\mathbf{G})$ coincides with $H$. By an application of the Hahn-Banach theorem, $\mathbf{G}$ is complete in $H$ if and only if for every $f \in H$, the condition $\langle f,g \rangle =0$ for all $g \in \mathbf{G}$ forces $f=0$, or equivalently, $\mathbf{G}^\perp=\{0\}$, where $\mathbf{G}^\perp$ denotes the orthogonal complement of set $\mathbf{G}$. The connection between completeness and $\Theta$-PR is summarized in the following observation.

\begin{lemma}\label{lma:completeness}
Let $\Theta\subseteq\T$. If $\mathbf{G} \subseteq H$ does $\Theta$-PR then $\mathbf{G}$ is complete. Furthermore, if $\mathbf{G}$ is complete and $\Theta$ is a singleton, then $\mathbf{G}$ does $\Theta$-PR.
\end{lemma}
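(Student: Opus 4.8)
The plan is to prove Lemma \ref{lma:completeness} in two separate implications, each of which reduces the nonlinear $\Theta$-PR condition to the elementary completeness criterion $\mathbf{G}^\perp=\{0\}$ recalled just above the statement. The arguments are short, and the main content is bookkeeping with the definition.

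\textbf{First implication.} Suppose $\mathbf{G}$ does $\Theta$-PR; I want to show $\mathbf{G}$ is complete, i.e. $\mathbf{G}^\perp=\{0\}$. The natural approach is contrapositive: assume $\mathbf{G}$ is incomplete, so there exists $0\neq x\in\mathbf{G}^\perp$, and build a counterexample to $\Theta$-PR. The idea is to perturb a single nonzero vector $h$ (say any $h\in H$ with $h\neq 0$, which exists since $\dim H\ge 2$) by the hidden vector $x$, so that all inner products against $\mathbf{G}$ are unchanged. Concretely, fix $\theta^*\in\Theta$ and set $h$ arbitrary nonzero and $f=\theta^* h + x$. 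Then for every $g\in\mathbf{G}$ we have $\langle x,g\rangle=0$, so $\langle f,g\rangle=\theta^*\langle h,g\rangle$; taking $\theta_g=\theta^*$ for all $g$ verifies the hypothesis of $\Theta$-PR. If $\mathbf{G}$ did $\Theta$-PR we would conclude $f=\theta h$ for some $\theta\in\Theta$, i.e. $\theta^* h + x = \theta h$, hence $x=(\theta-\theta^*)h$, so $x$ would be a scalar multiple of $h$. The only subtlety is that I must arrange $x$ and $h$ to be genuinely independent so that this is a contradiction. Since I am free to choose $h$, I simply pick $h\neq 0$ not parallel to $x$ (possible because $\dim H\ge 2$), which forces $\theta-\theta^*=0$ and $x=0$, contradicting $x\neq 0$. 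This yields the failure of $\Theta$-PR and completes the contrapositive.

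\textbf{Second implication.} Now assume $\mathbf{G}$ is complete and $\Theta=\{\theta_0\}$ is a singleton; I want $\mathbf{G}$ does $\Theta$-PR. This direction is even more direct: the only available rotation factor is $\theta_0$, so the hypothesis of $\Theta$-PR forces $\theta_g=\theta_0$ for every $g\in\mathbf{G}$, giving $\langle f,g\rangle=\theta_0\langle h,g\rangle$ for all $g$. Rewriting this as $\langle f-\theta_0 h,\,g\rangle=0$ for all $g\in\mathbf{G}$ shows $f-\theta_0 h\in\mathbf{G}^\perp=\{0\}$ by completeness, whence $f=\theta_0 h$ with $\theta_0\in\Theta$, which is exactly the conclusion of $\Theta$-PR.

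I do not expect a serious obstacle here: both directions are essentially unwinding the definition against the Hahn–Banach completeness characterization. The only point requiring a moment's care is the first implication, where one must exploit $\dim H\ge 2$ to choose a reference vector $h$ not collinear with the hidden vector $x$; without the dimension assumption the argument $x=(\theta-\theta^*)h$ would not immediately produce a contradiction. Everything else is formal.
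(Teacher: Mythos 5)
Your proof is correct, and the second implication is verbatim the paper's argument. The first implication, however, takes a different route: you argue by contraposition, perturbing an auxiliary nonzero vector $h$ by the hidden vector $x\in\mathbf{G}^\perp$ to get the pair $(f,h)=(\theta^*h+x,\,h)$, and you need $\dim H\ge 2$ to choose $h$ non-collinear with $x$ so that $x=(\theta-\theta^*)h$ yields a contradiction. The paper instead applies the $\Theta$-PR hypothesis directly to the pair $(f,0)$ for $f\in\mathbf{G}^\perp$: the condition $\langle f,g\rangle=\theta_g\langle 0,g\rangle$ holds trivially for arbitrary $\theta_g\in\Theta$, so $\Theta$-PR forces $f=\theta\cdot 0=0$, i.e.\ $\mathbf{G}^\perp=\{0\}$. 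This comparison with the zero vector is shorter, needs no contrapositive, no choice of $\theta^*$ or $h$, and no dimension hypothesis (it would even work if $\dim H=1$, though the paper's standing assumption $\dim H\ge 2$ makes your use of it legitimate). What your construction buys in exchange is a \emph{linearly independent} pair of witnesses to the failure of $\Theta$-PR, which is precisely the stronger normalization the paper establishes separately in Lemma~\ref{lma:failure_witnessing} (compare its Case~2, where essentially your vectors $f+\theta h$ and $h$ appear); so your argument anticipates that lemma, at the cost of a less economical proof of completeness itself.
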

\begin{proof}
If $f \in H$ satisfies $\langle f,g\rangle=0$ for every $g\in\mathbf{G}$ then, in particular, 
$$\langle f,g\rangle = \theta_g\langle0,g\rangle$$
for all $g \in \mathbf{G}$ and arbitrary $\theta_g \in \Theta$.
Since $\mathbf{G}$ does $\Theta$-PR, we obtain $f=\theta\cdot 0 = 0$. Since $f$ was arbitrary, it follows that $\mathbf{G}$ is complete.

    Now, let $\mathbf{G}$ be complete and suppose that $\Theta=\{\theta\}$ for some $\theta \in \T$. Fix any $f,h\in H$ and suppose that for every $g\in \mathbf{G}$ we have $$\langle f,g\rangle = \theta_g \langle h,g\rangle$$ with $\theta_g \in \Theta$. By assumption on $\Theta$ we have $\theta_g = \theta$ for all $g \in \mathbf{G}$. Hence, $\langle f-\theta h,g\rangle = 0$ for all $g \in \mathbf{G}$ and completeness yields $f=\theta h$.
\end{proof}

Using the statement above we can easily derive the following lemma, that will be applied throughout this article.

\begin{lemma}\label{lemma:theta_pr_implies_theta'_pr}
Let $\Theta, \Theta' \subseteq \T$ such that $\Theta' \subseteq \Theta$. If $\mathbf{G} \subseteq H$ does $\Theta$-PR then $\mathbf{G}$ does $\Theta'$-PR.
\end{lemma}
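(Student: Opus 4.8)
The plan is to reduce $\Theta'$-PR directly to the assumed $\Theta$-PR by observing that every $\Theta'$-constraint is in particular a $\Theta$-constraint. Concretely, I would fix arbitrary $f,h \in H$ and suppose that for each $g \in \mathbf{G}$ there is some $\theta_g \in \Theta'$ with $\langle f,g \rangle = \theta_g \langle h,g \rangle$. Since $\Theta' \subseteq \Theta$, each such $\theta_g$ also lies in $\Theta$, so the very same family of equations witnesses the hypothesis of $\Theta$-PR. Applying the assumption that $\mathbf{G}$ does $\Theta$-PR then yields a scalar $\theta \in \Theta$ with $f = \theta h$.

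The one subtlety is that the conclusion of $\Theta'$-PR requires the rotation factor to lie in the smaller set $\Theta'$, whereas the step above only produces $\theta \in \Theta$. I would upgrade this as follows. If $h = 0$, then $f = \theta h = 0$, and since $\Theta' \neq \varnothing$ we may write $f = \theta' h$ for any chosen $\theta' \in \Theta'$, so the conclusion holds. If $h \neq 0$, I would invoke completeness of $\mathbf{G}$, which holds because $\mathbf{G}$ does $\Theta$-PR (Lemma \ref{lma:completeness}); completeness provides some $g_0 \in \mathbf{G}$ with $\langle h, g_0 \rangle \neq 0$. Comparing the two expressions for $\langle f, g_0 \rangle$, namely $\theta \langle h, g_0 \rangle = \langle f, g_0 \rangle = \theta_{g_0} \langle h, g_0 \rangle$, and dividing by the nonzero scalar $\langle h, g_0 \rangle$, yields $\theta = \theta_{g_0} \in \Theta'$. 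Hence $f = \theta h$ with $\theta \in \Theta'$, as required.

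The argument is essentially immediate once one tracks which set the output rotation is allowed to live in; the only nontrivial point, and the step I would be most careful about, is precisely this membership $\theta \in \Theta'$, which is where completeness enters. Beyond the elementary split into the cases $h = 0$ and $h \neq 0$, no delicate estimates or further case analysis are needed.
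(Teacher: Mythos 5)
Your proposal is correct and follows essentially the same route as the paper's proof: reduce the $\Theta'$-constraints to $\Theta$-constraints via the inclusion $\Theta' \subseteq \Theta$, and then use completeness of $\mathbf{G}$ (Lemma \ref{lma:completeness}) to force the output rotation into $\Theta'$. The only cosmetic difference is that you argue directly by splitting into the cases $h=0$ and $h\neq 0$, whereas the paper runs the same completeness argument by contradiction, assuming $\theta \in \Theta \setminus \Theta'$ and deducing $h \in \mathbf{G}^\perp$.
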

\begin{proof}
    Suppose $\mathbf{G}$ does $\Theta$-PR. In order to show that $\mathbf{G}$ does $\Theta'$-PR, we fix $f,h\in H$ and suppose that for every $g\in\mathbf{G}$ there exists $\theta_g\in \Theta'$ such that
\begin{equation}\label{eq:theta'_implies_theta}
    \langle f,g\rangle =\theta_g \langle h,g\rangle.     
\end{equation}
Since $\Theta'\subseteq \Theta$, and since $\mathbf{G}$ does $\Theta$-PR, it follows that $f=\theta h$ for some $\theta \in \Theta$. It remains to show that $\theta \in \Theta'$. Suppose $f=\theta h$ for some $\theta \in \Theta \setminus \Theta'$,. Using equation \eqref{eq:theta'_implies_theta} in combination with the property $\theta - \theta_g \neq 0$ for all $g \in \mathbf{G}$, we obtain that $h \in \mathbf{G}^\perp$. By Lemma \ref{lma:completeness}, since $\mathbf{G}$ is complete we get $h = 0$ and thus also $f=0$, which shows that $f=\theta h$ for any $\theta \in \Theta'$, giving a contradiction.
\end{proof}

Another useful observation that we will use throughout the article is the following.

\begin{lemma}\label{lma:failure_witnessing}
    Let $\Theta\subseteq \T$ and let $\mathbf{G}
    \subseteq H$. Then $\mathbf{G}$ fails $\Theta$-PR if and only there exist linearly independent $f,h \in H$ witnessing this failure.
\end{lemma}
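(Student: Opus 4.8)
The plan is to establish the two implications separately, with the nontrivial content lying entirely in the forward direction. Recall that a pair $(f,h)$ \emph{witnesses failure} of $\Theta$-PR if for every $g \in \mathbf{G}$ there is some $\theta_g \in \Theta$ with $\langle f,g\rangle = \theta_g \langle h,g\rangle$, yet $f \neq \theta h$ for every $\theta \in \Theta$. The "if" direction is immediate: if a witnessing pair with $f,h$ linearly independent exists, then in particular a witnessing pair exists, so $\mathbf{G}$ fails $\Theta$-PR and no further argument is needed.

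For the converse, I would fix an arbitrary witnessing pair $(f,h)$. If $f$ and $h$ are linearly independent we are done, so the substance is the case where they are linearly dependent. My first step is to show that such a dependent witness forces $\mathbf{G}$ to be incomplete, i.e. that $\mathbf{G}^\perp$ contains a nonzero vector $z$. Splitting the dependence into the cases $h=0$ and $h \neq 0$ (with $f = ch$ for a scalar $c$), the witnessing relation together with $\theta_g \neq 0$ does the work: when $h = 0$ the relation gives $\langle f,g\rangle = 0$ for all $g$, and $f \neq 0$ since otherwise $f = \theta h$ trivially, so $z := f \in \mathbf{G}^\perp\setminus\{0\}$; when $h \neq 0$ the failure condition $f \neq \theta h$ for all $\theta$ forces $c \notin \Theta$ (note $0 \notin \T$ handles $c=0$ automatically), and then $(c-\theta_g)\langle h,g\rangle = 0$ with $c \neq \theta_g$ yields $\langle h,g\rangle = 0$ for every $g$, so $z := h \in \mathbf{G}^\perp\setminus\{0\}$.

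Having produced $z \in \mathbf{G}^\perp \setminus \{0\}$, my second step is to manufacture a linearly independent witness directly from $z$. I would fix any $\theta \in \Theta$ and, using the standing assumption $\dim(H) \geq 2$, choose $h' \in H$ linearly independent from $z$, and set $f' = \theta h' + z$. Since $z \perp \mathbf{G}$, for every $g \in \mathbf{G}$ we have $\langle f',g\rangle = \theta \langle h',g\rangle$, so the constant choice $\theta_g = \theta$ verifies the first witnessing condition. Furthermore, $f' = \theta' h'$ for some scalar $\theta'$ would give $z = (\theta'-\theta)h'$, contradicting the independence of $z$ and $h'$; this simultaneously shows that $f'$ is not any scalar multiple of $h'$ (so $(f',h')$ is linearly independent) and that $f' \neq \theta' h'$ for all $\theta' \in \Theta$ (so $(f',h')$ genuinely witnesses failure).

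The only delicate point is the case distinction in the dependent case: one must verify that \emph{every} way a witnessing pair can be linearly dependent still extracts a nonzero element of $\mathbf{G}^\perp$, and this is precisely where the hypotheses $\theta_g \in \T$ (hence $\theta_g \neq 0$) and $f \neq \theta h$ for all $\theta \in \Theta$ enter. Once incompleteness is in hand, the construction $f' = \theta h' + z$ is routine, with $\dim(H)\geq 2$ guaranteeing the existence of the companion vector $h'$ independent from $z$.
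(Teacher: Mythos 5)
Your proof is correct, and while it rests on the same two pillars as the paper's argument, it organizes them differently. The paper splits on whether $\mathbf{G}$ is complete: in the complete case it shows any witnessing pair is automatically independent (your step one is exactly the contrapositive of this), and in the incomplete case it builds a witness by picking a \emph{nonzero element $h \in \mathbf{G}$ itself} together with $0 \neq f \in \mathbf{G}^\perp$ and setting $x = f + \theta h$, which forces a separate treatment of $\mathbf{G} = \{0\}$ and a somewhat fiddly independence check (the paper's "$f \in \mathbf{G}^\perp \cap \mathbf{G}$" step really amounts to $f$ being a nonzero multiple of $h \in \mathbf{G}$ while orthogonal to $h$). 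Your construction instead takes $f' = \theta h' + z$ with $z \in \mathbf{G}^\perp \setminus \{0\}$ and $h'$ an \emph{arbitrary} vector independent of $z$, which the standing assumption $\dim(H) \geq 2$ supplies; this buys a uniform argument that subsumes the $\mathbf{G} = \{0\}$ case and makes the independence of $(f',h')$ a one-line verification. Your case analysis in the dependent situation ($h = 0$, versus $f = ch$ with $c \notin \Theta$ since $\Theta \subseteq \T$ excludes $c = 0$) is complete and correctly isolates where $\theta_g \in \T$ and the failure condition $f \neq \theta h$ are used. In short: same skeleton read through the lens of the witness rather than of completeness, with a marginally cleaner construction in the incomplete case.
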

\begin{proof}
    It suffices to show the forward implication, as the reverse implication is immediate. We consider two cases.
    
    \textbf{Case 1}: $\mathbf{G}$ is complete. Since $\mathbf{G}$ fails $\Theta$-PR we can find $f,h\in H$ such that for every $g\in \mathbf{G}$ there exists $\theta_g\in \Theta$ with
    $$\langle f,g\rangle = \theta_g\langle h,g\rangle,$$
    and yet $f\neq \theta h$ for any $\theta \in \Theta$. Observe that in particular this gives $f,h\neq 0$. Suppose that $f=ch$ for some $c\in \C\setminus \Theta$. Then for every $g \in \mathbf{G}$ 
    $$c\langle h,g\rangle=\langle f,g\rangle = \theta_g\langle h,g\rangle.$$
    Since $h \neq 0$ and $\mathbf{G}$ is complete, we can find $g \in \mathbf{G}$ such that $\langle h,g\rangle\neq 0$. This gives $c=\theta_g \in \Theta$, a contradiction.

    \textbf{Case 2:} $\mathbf{G}$ is not complete. If $\mathbf{G}=\{0\}$, then any linearly independent $f,h\in H$ witness this failure. Suppose $\mathbf{G}\neq \{0\}$. We pick any non-zero $f\in \mathbf{G}^\perp$, $h\in \mathbf{G}$ and any $\theta \in \Theta$. We first observe that $x=f+\theta h$ and $h$ are linearly independent. Indeed, if $f+\theta h = ch$ for some $c\in \C$, then $f+(\theta -c)h=0$. If $c=\theta$ then $f=0$, giving a contradiction. If $c\neq \theta$, then $f = (c-\theta)h$, hence $f\in \mathbf{G}^\perp \cap \mathbf{G}$, thus $f=0$, also resulting in a contradiction. Finally, observe that for every $g\in \mathbf{G}$ we have
    $$\langle x,g \rangle = \langle f+\theta h,g\rangle = \theta \langle h,g\rangle,$$
    which shows that the linearly independent vectors $x$ and $h$ witness the failure of $\Theta$-PR.
\end{proof}

We also make use of the following elementary fact.

\begin{lemma}\label{lemma:invariantTransform}
Let $H_1,H_2$ be Hilbert spaces, let $\mathbf{G} \subseteq H_1$, and let $\Theta \subseteq \T$.
    \begin{enumerate}
        \item If $T : H_1 \to H_2$ is an invertible bounded linear operator, then $\mathbf{G}$ does $\Theta$-PR in $H_1$ if and only if $T(\mathbf{G}) = \{ Tg : g \in \mathbf{G} \}$ does $\Theta$-PR in $H_2$.
        \item If $\{ c_g \}_{g \in \mathbf{G}} \subseteq \C \setminus \{ 0 \}$, then $\mathbf{G}$ does $\Theta$-PR in $H_1$ if and only if $\{ c_gg : g \in \mathbf{G} \}$ does $\Theta$-PR in $H_1$.
    \end{enumerate}
\end{lemma}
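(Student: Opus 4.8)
The plan is to reduce both assertions to the defining inner-product conditions in \eqref{eq:theta_pr} and to track how these conditions transform under the respective maps. Since the conclusion $f=\theta h$ of $\Theta$-PR is a relation purely between the test vectors $f,h$, while the hypothesis involves only the pairings against the system, it suffices to exhibit in each case a bijection between admissible pairs that matches up hypotheses and conclusions.

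For part (1), I would first record that an invertible bounded linear operator $T$ has an invertible adjoint, with $(T^*)^{-1}=(T^{-1})^*$; in particular $T^*\colon H_2\to H_1$ is a bijection. The essential identity is the adjoint relation $\langle F,Tg\rangle_{H_2}=\langle T^*F,g\rangle_{H_1}$, valid for all $F\in H_2$ and $g\in\mathbf{G}$. Writing $f=T^*F$ and $h=T^*H$, the $T(\mathbf{G})$-constraints $\langle F,Tg\rangle=\theta_g\langle H,Tg\rangle$ become verbatim the $\mathbf{G}$-constraints $\langle f,g\rangle=\theta_g\langle h,g\rangle$, with the \emph{same} rotation factors $\theta_g$. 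Because $T^*$ is a bijection, the assignment $(F,H)\mapsto(T^*F,T^*H)$ is a bijective correspondence between pairs in $H_2$ and pairs in $H_1$ preserving these constraint families. Finally, invertibility of $T^*$ also matches the conclusions: $F=\theta H$ if and only if $T^*F=\theta T^*H$, i.e.\ $f=\theta h$. Running this equivalence in both directions yields that $\mathbf{G}$ does $\Theta$-PR in $H_1$ if and only if $T(\mathbf{G})$ does $\Theta$-PR in $H_2$.

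Part (2) is even more direct, and does not require part (1) (a pointwise rescaling of the system is not a single linear operator applied to it). Here I would use conjugate linearity in the second slot: $\langle f,c_gg\rangle=\overline{c_g}\langle f,g\rangle$. Hence the constraint $\langle f,c_gg\rangle=\theta_g\langle h,c_gg\rangle$ reads $\overline{c_g}\langle f,g\rangle=\theta_g\overline{c_g}\langle h,g\rangle$, and since $c_g\neq0$ we may cancel the common nonzero factor $\overline{c_g}$ to recover exactly $\langle f,g\rangle=\theta_g\langle h,g\rangle$, with the same $\theta_g\in\Theta$. As the conclusion $f=\theta h$ is unaffected by the scaling of the system, the two constraint families are logically equivalent for every fixed pair $(f,h)$, and the claimed equivalence follows immediately.

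The only genuine subtlety lies in part (1): since $T$ is merely invertible and bounded, not unitary, inner products are \emph{not} preserved under $T$ itself, so one cannot simply substitute $Tf$ for $f$. The correct device is to route the computation through the adjoint $T^*$ rather than through $T$, using that $T^*$ inherits invertibility from $T$; this is precisely what makes $(F,H)\mapsto(T^*F,T^*H)$ a bijection and simultaneously aligns the conclusions.
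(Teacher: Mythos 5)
Your proof is correct and follows essentially the same route as the paper: part (1) is handled by passing to the invertible adjoint $T^*$ via $\langle F,Tg\rangle_{H_2}=\langle T^*F,g\rangle_{H_1}$, exactly as in the paper's argument, and part (2) by the same direct cancellation of the nonzero conjugated scalars that the paper leaves to the definition. Your packaging of part (1) as a single constraint-preserving bijection $(F,H)\mapsto(T^*F,T^*H)$ merely combines the two implications the paper treats symmetrically; there is no substantive difference.
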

\begin{proof}
    Part (2) of the statement follows directly from the definition of a system doing $\Theta$-PR. To prove part (1), suppose that $\mathbf{G}$ does $\Theta$-PR in $H_1$ and let $f,h \in H_2$ such that
    $$
    \langle f,Tg \rangle_{H_2} = \theta_g \langle h,Tg \rangle_{H_2}
    $$
    for all $g \in \mathbf{G}$ and some $\theta_g \in \Theta$. The latter is equivalent to the condition that the adjoint $T^* : H_2 \to H_1$ of $T$ satisfies
    $$
    \langle T^*f,g \rangle_{H_1} = \theta_g \langle T^*h,g \rangle_{H_1}
    $$
    for all $g \in \mathbf{G}$, which implies that $T^* f = \theta T^* h$ for some $\theta \in \Theta$. Since $T^*$ is linear and invertible, we have $f= \theta h$, which implies that $T(\mathbf{G})$ does $\Theta$-PR in $H_2$. The reverse implication follows analogously.
\end{proof}

Lemma \ref{lma:completeness} shows that the completeness of a system is sufficient for doing $\Theta$-PR for singletons. If $\Theta$ consists of more than one element, then $\Theta$-PR can be achieved by replacing completeness with the notion of overcompleteness. Recall that a countable system $\mathbf{G} = \{ g_j \}_{j\in \N} \subseteq H$ is said to be overcomplete (or hypercomplete or densely closed) if every subsequence $\{ g_{j_k} \}_{k\in\N}$ of $\mathbf{G}$ is complete \cite{singer1981bases}. Here we use the classical notion of overcompleteness commonly found in the Banach space literature; alternative notions appear in the theory of frames \cite{christensen2003introduction,heil2011basis}. A concrete example of an overcomplete system is provided by exponential systems $E(\Lambda)$ with $\Lambda$ satisfying a certain topological property, as described below. Recall that for $\Lambda \subseteq \C$ the exponential system $E(\Lambda)$ consists of all exponentials $e_\lambda(x) = e^{2\pi i \lambda x}$ with $\lambda \in \Lambda$. It follows from \cite[Theorem 4.3]{chalendar2006overcompleteness}, that for $a>0$, the exponential system $E(\Lambda)$ is overcomplete in $L^2[0,a]$ if and only if $\Lambda$ has an accumulation point.
Our next result provides a comparison between overcompleteness and $\Theta$-PR.

\begin{lemma}\label{prop:overcompleteness}
    If $\mathbf{G} \subseteq H$ is overcomplete then $\mathbf{G}$ does $\Theta$-PR for every finite $\Theta\subseteq \T$.
\end{lemma}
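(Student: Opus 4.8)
The plan is to exploit the fact that when $\Theta$ is finite, the assignment $g \mapsto \theta_g$ can only take finitely many values, so a pigeonhole argument forces one value to be attained on an infinite subsystem, on which the nonlinear constraint collapses to the linear one handled by Lemma \ref{lma:completeness}.

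Concretely, write $\Theta = \{\theta^{(1)}, \dots, \theta^{(n)}\}$ and recall that overcompleteness is stated for a countably infinite system $\mathbf{G} = \{g_j\}_{j \in \N}$. Fix $f,h \in H$ and suppose that for every $g \in \mathbf{G}$ there is some $\theta_g \in \Theta$ with $\langle f,g\rangle = \theta_g \langle h,g\rangle$. Choosing one such $\theta_g$ for each $g$, I would partition $\mathbf{G}$ into the classes $\mathbf{G}_i \coloneqq \{ g \in \mathbf{G} : \theta_g = \theta^{(i)} \}$ for $i = 1,\dots,n$. Since $\mathbf{G}$ is countably infinite and covered by the $n$ sets $\mathbf{G}_1,\dots,\mathbf{G}_n$, at least one class, say $\mathbf{G}_{i_0}$, must be infinite.

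The key step is that $\mathbf{G}_{i_0}$, being an infinite subset of the sequence $\{g_j\}_{j\in\N}$, may be enumerated in increasing order of index as a subsequence of $\mathbf{G}$; overcompleteness then guarantees that $\mathbf{G}_{i_0}$ is complete in $H$. On $\mathbf{G}_{i_0}$ the constraint reads $\langle f,g\rangle = \theta^{(i_0)}\langle h,g\rangle$, that is, $\langle f - \theta^{(i_0)} h, g\rangle = 0$ for all $g \in \mathbf{G}_{i_0}$, and completeness forces $f - \theta^{(i_0)} h = 0$. Since $\theta^{(i_0)} \in \Theta$, this is exactly $f = \theta h$ for some $\theta \in \Theta$, which establishes $\Theta$-PR.

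There is essentially no hard obstacle here; the only points requiring care are (i) fixing a single value $\theta_g$ for each $g$ (on vectors with $\langle h,g\rangle = 0$ the choice is non-unique but irrelevant), and (ii) verifying that an infinite subset of a sequence indexed by $\N$ genuinely qualifies as a subsequence, so that the definition of overcompleteness applies. Both are routine, and the pigeonhole step is precisely where finiteness of $\Theta$ is used — for infinite $\Theta$ no single class need be infinite, which is consistent with the fact that overcompleteness is not expected to yield $\Theta$-PR for infinite $\Theta$.
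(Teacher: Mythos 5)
Your proof is correct and follows essentially the same route as the paper: partition (or cover) the countable system according to which $\theta \in \Theta$ realizes the constraint, apply the pigeonhole principle to extract an infinite class, and invoke overcompleteness of $\mathbf{G}$ to conclude that this class is complete, forcing $f - \theta^{(i_0)} h = 0$. The only cosmetic difference is that the paper defines the index sets $I_j$ by the equation itself (so they may overlap) rather than fixing a choice function, which changes nothing in the argument.
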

\begin{proof}
    Suppose that $\mathbf{G} = \{ g_n \}_{n \in \N}$ is overcomplete and let $\Theta$ be finite with $\Theta = \{ \theta_1, \dots, \theta_N \}$. To show that $\mathbf{G}$ does $\Theta$-PR let $f,h \in H$ be such that
    $$
    \langle f,g_n \rangle = \theta_{g_n} \langle h,g_n \rangle
    $$
    with $\theta_{g_n} \in \Theta$. Define index sets $I_j$ via
    $$
    I_j = \{ n \in \N : \langle f,g_n \rangle = \theta_j \langle h,g_n \rangle \}.
    $$
    Then $\N = I_1 \cup \dots \cup I_N$ and at least one of the index sets $I_j$ is infinite. Denote this set by $I_\ell$. Since $\mathbf{G}$ is overcomplete it follows that $\{ g_n \}_{n \in I_\ell}$ is complete and satisfies
    $$
    \langle f-\theta_\ell h, g_n \rangle =0
    $$
    for all $n \in I_\ell$. Hence $f-\theta_\ell h = 0$.
\end{proof}

The reverse implication of the latter statement is in general not true, as the following example shows.

\begin{example}
    Consider the exponential system $\mathbf{G} = E(-i\N) = \{ e^{2\pi n x}\}_{n \in \N}$ in $H=L^2[0,1]$.  According to \cite{chalendar2006overcompleteness}, $\mathbf{G}$ is not overcomplete in $L^2[0,1]$. On the other hand, $\mathbf{G}$ does $\Theta$-PR for any finite $\Theta$: let $\Theta = \{ \gamma_1, \dots, \gamma_N \}$ and $f,h \in H$ with
    $$
    \langle f,e^{2\pi nx} \rangle = \theta_n \langle f,e^{2\pi nx} \rangle
    $$
    for some $\theta_n \in \Theta$. Let
    $$
    I_j = \{ n \in \N : \langle f,e^{2\pi n x } \rangle = \gamma_j \langle h, e^{2\pi n x } \rangle \}.
    $$
    Since
    $$
    \infty = \sum_{n=1}^\infty \frac{1}{n} = \sum_{j=1}^N \sum_{n \in I_j} \frac{1}{n}
    $$
    there exists $\ell$ such that $\sum_{n \in I_\ell} \frac{1}{n} = \infty$. By a change of variables and the Müntz-Szasz Theorem \cite{Young} it follows that $\{ e^{2\pi n x}\}_{n \in I_\ell}$ is complete in $L^2[0,1]$. Since
    $$
    \langle f-\theta_\ell h, e^{2\pi n x} \rangle =0
    $$
    for all $n \in I_\ell$, from completeness of $\{e^{2\pi nx}\}_{n\in I_\ell}$ we get $f=\theta_\ell h$. Hence, $\{ e^{2\pi n x}\}_{n \in \N}$ does $\Theta$-PR. Since $\Theta$ was an arbitrary finite set, the statement follows.
\end{example}

Finally, we point out that overcompleteness is in general incomparable to PR.

\begin{example}
    (1) Consider the Hilbert space $H = L^2[0,1]$ and the set
    $$
    S = \left \{ \frac{1}{n} : n \in \N \right \}
    $$
    Moreover, consider the full exponential system $E(\R)$. It is well-known that $E(\R)$ fails PR in $L^2[0,1]$. Since $E(S)\subseteq E(\R)$, we get that $E(S)$ also fails PR in $L^2[0,1]$. On the other hand, it follows from \cite{chalendar2006overcompleteness} that $E(S)$ is overcomplete in $L^2[0,1]$. Hence, overcompleteness does not imply PR.

    (2) Consider the Hilbert space $H=L^2(\R)$ and the set
    $$
    S = \left \{ (\pm \sqrt{n},\pm \sqrt{k}) : (n,k) \in \N^2 \right \} \subseteq \R^2.
    $$
    In \cite{grohs2025phaselesssquareroot} it was shown that there exists $g \in \lt$ such that the Gabor system
    $$
    \mathbf{G} = \{ e^{2\pi i \omega x} g(x-t): (t,\omega) \in S \}
    $$
    does PR in $L^2(\R)$. Notice that the system of translates
    $$
    \mathbf{G}' = \{g(x-n) : n \in \Z \}
    $$
    forms an incomplete subset of $\mathbf{G}$. This follows from the well-known fact that a system of integer translates of a function $g \in \lt$ is never complete in $\lt$. Considering $\mathbf{G}$ as a sequence in $\lt$ and $\mathbf{G}'$ as a subsequence of $\mathbf{G}$ shows that $\mathbf{G}$ is not overcomplete but it does PR. 
\end{example}

\subsection{Complement property}\label{sec:complement_property}

Lemma \ref{lma:completeness} and Lemma \ref{prop:overcompleteness} show that completeness and overcompleteness provide sufficient conditions for $\Theta$-PR for singletons and finite sets, respectively. However, neither of these conditions yields a characterization of $\Theta$-PR. Thus, to obtain a genuine characterization, one must replace completeness by a stronger condition - yet one that remains strictly weaker than overcompleteness. We will obtain such a characterization in Section \ref{sec:characteriztion} for every countable set $\Theta \subseteq \T$. In the present section we focus on the special case when $\Theta$ consists of two elements.

In the case of real Hilbert spaces, a characterization for the sign retrieval problem has been established in terms of the so-called complement property. This property may be viewed as a strengthened form of completeness: whereas completeness requires that the closure of $\lspan(\mathbf{G})$ is all of $H$, the complement property imposes a more robust spanning condition on every two-element partition of $\mathbf{G}$.

\begin{definition}
    We say that $\mathbf{G} \subseteq H$ has the \emph{complement property} if for every $\mathbf{S} \subseteq \mathbf{G}$ it holds that $\mathbf{S}$ is complete in $H$ or $\mathbf{G} \setminus \mathbf{S}$ is complete in $H$. 
\end{definition}

The following proposition establishes a fundamental link between phase retrieval and the complement property \cite{cahill2016phase}.

\begin{proposition}\label{prop:cp2}
    If $\mathbf{G} \subseteq H$ does PR, then $\mathbf{G}$ has the complement property. If $H$ is a real Hilbert space and $\mathbf{G} \subseteq H$ has the complement property, then $\mathbf{G}$ does (real) PR.
\end{proposition}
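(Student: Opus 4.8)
The plan is to establish the two implications by different routes: the forward direction (PR $\Rightarrow$ complement property) by contraposition through an explicit counterexample, and the backward direction (complement property $\Rightarrow$ real PR, over $\R$) by a direct partition argument. Throughout I will use that PR is $\Theta$-PR with $\Theta=\T$, so that by Lemma \ref{lma:completeness} any system doing PR is automatically complete, i.e.\ $\mathbf{G}^\perp=\{0\}$.

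For the forward implication I would assume $\mathbf{G}$ does PR and suppose, toward a contradiction, that the complement property fails. Then there is $\mathbf{S}\subseteq\mathbf{G}$ with neither $\mathbf{S}$ nor $\mathbf{G}\setminus\mathbf{S}$ complete, so $\mathbf{S}^\perp$ and $(\mathbf{G}\setminus\mathbf{S})^\perp$ each contain a nonzero vector, say $u\in\mathbf{S}^\perp$ and $v\in(\mathbf{G}\setminus\mathbf{S})^\perp$. The candidate counterexample is $f=u+v$ and $h=u-v$. For $g\in\mathbf{S}$ one computes $\langle f,g\rangle=\langle v,g\rangle=-\langle h,g\rangle$, and for $g\in\mathbf{G}\setminus\mathbf{S}$ one computes $\langle f,g\rangle=\langle u,g\rangle=\langle h,g\rangle$; in both cases $|\langle f,g\rangle|=|\langle h,g\rangle|$, so $f,h$ satisfy the PR hypothesis. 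It then remains to rule out $f=\theta h$ for every $\theta\in\T$, which amounts to showing $(1-\theta)u+(1+\theta)v=0$ has no unimodular solution, and this holds exactly when $u$ and $v$ are linearly independent (since linear independence forces $1-\theta=0$ and $1+\theta=0$ simultaneously).

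The one delicate point — and the step I expect to be the main obstacle — is securing that linear independence, since a priori $u$ and $v$ could be parallel. Here I would argue that if every nonzero vector of $\mathbf{S}^\perp$ were parallel to every nonzero vector of $(\mathbf{G}\setminus\mathbf{S})^\perp$, then both complements would coincide with a single one-dimensional space $\lspan\{u_0\}$, whence $u_0\perp\mathbf{S}$ and $u_0\perp(\mathbf{G}\setminus\mathbf{S})$ give $u_0\in\mathbf{G}^\perp=\{0\}$, contradicting completeness. Thus linearly independent $u,v$ can always be chosen, completing the contrapositive. (Alternatively one could invoke Lemma \ref{lma:failure_witnessing} to pass to linearly independent witnesses, but the direct argument keeps the construction self-contained.)

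For the backward implication, assume $H$ is real and $\mathbf{G}$ has the complement property, and take $f,h$ with $\langle f,g\rangle=\pm\langle h,g\rangle$ for all $g\in\mathbf{G}$. The crucial feature of the real setting is that this sign dichotomy partitions $\mathbf{G}$ cleanly: setting $\mathbf{S}=\{g\in\mathbf{G}:\langle f-h,g\rangle=0\}$ gives $f-h\in\mathbf{S}^\perp$, while every $g\in\mathbf{G}\setminus\mathbf{S}$ must satisfy $\langle f,g\rangle=-\langle h,g\rangle$, hence $f+h\in(\mathbf{G}\setminus\mathbf{S})^\perp$. Applying the complement property to this specific $\mathbf{S}$ yields either $\mathbf{S}^\perp=\{0\}$, forcing $f=h$, or $(\mathbf{G}\setminus\mathbf{S})^\perp=\{0\}$, forcing $f=-h$; in either case $f=\pm h$, which is precisely real PR. It is worth emphasizing that this partition trick is intrinsically two-valued and is exactly what breaks down for complex scalars, where $|\langle f,g\rangle|=|\langle h,g\rangle|$ permits $\langle f,g\rangle=\theta_g\langle h,g\rangle$ with $\theta_g$ ranging over all of $\T$; this is the conceptual reason the converse is confined to the real case.
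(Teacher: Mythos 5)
Your proof is correct, but it follows a different route than the paper, which gives no standalone proof of Proposition \ref{prop:cp2}: there the statement is obtained as an immediate corollary of the two-phase characterization (Proposition \ref{prop:cp}) together with Lemma \ref{lemma:theta_pr_implies_theta'_pr} — PR implies $\{1,-1\}$-PR, which is equivalent to the complement property, and the sufficiency half of Proposition \ref{prop:cp}, whose proof is scalar-field-agnostic, yields real PR over $\R$. Your backward direction is in substance the same partition argument as that sufficiency proof, specialized to $\Theta=\{1,-1\}$. The genuine divergence is in the forward direction. You construct a PR counterexample directly from $u\in\mathbf{S}^\perp$, $v\in(\mathbf{G}\setminus\mathbf{S})^\perp$ via $f=u+v$, $h=u-v$, which forces you to secure linear independence of $u,v$ — and you correctly identify this as the delicate point and close it via completeness; in fact your patch can be shortened, since if the chosen $u,v$ are parallel then $u$ already lies in $\mathbf{S}^\perp\cap(\mathbf{G}\setminus\mathbf{S})^\perp=\mathbf{G}^\perp=\{0\}$, a contradiction, with no need to discuss \emph{all} vectors of the two complements. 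The paper's necessity argument for Proposition \ref{prop:cp} instead uses the asymmetric pair $\bigl(\theta_1 f+\theta_2 h,\; f+h\bigr)$, for which the putative conclusion $(\theta_1-\theta)f+(\theta_2-\theta)h=0$ is refuted coefficientwise using only $f\neq 0$ and $h\neq 0$, so the linear-independence issue never arises; this is what the extra generality (arbitrary two-element $\Theta$, and hence the unification of real and complex PR) buys, at the price of routing through the $\Theta$-PR machinery, whereas your argument is self-contained and makes the failure mechanism explicit. One cosmetic caveat: your parenthetical suggestion that Lemma \ref{lma:failure_witnessing} could substitute for the independence step does not quite work, since that lemma upgrades an already-established failure witness, while with parallel $u,v$ the pair $(u+v,u-v)$ need not witness failure at all — but this is moot, as your main argument does not rely on it.
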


According to the latter result, the complement property characterizes phase retrieval only in the setting of real Hilbert spaces. Proposition \ref{prop:cp} shows that, in fact, the complement property always characterizes $\Theta$-PR whenever $\Theta$ consists of exactly two elements. In this broader framework, the distinction between real and complex phase retrieval disappears, and Proposition \ref{prop:cp2} becomes an immediate corollary.

\begin{proof}[Proof of Proposition \ref{prop:cp}]
    \textbf{Necessity}. Assume that $\mathbf{G}$ does $\{\theta_1,\theta_2\}$-PR and $\mathbf{G}$ does not have the complement property. Then there exists $\mathbf{S} \subseteq \mathbf{G}$ such that both $\mathbf{S}$ and $\mathbf{G} \setminus \mathbf{S}$ are not complete. Therefore, we can find non-zero $f,h \in H$ such that $\langle f,g\rangle =0$ for all $g\in \mathbf{S}$ and $\langle h,g\rangle =0$ for all $g\in \mathbf{G\setminus S}$. Hence,
    $$
    \langle \theta_1 f+\theta_2 h, g\rangle =\begin{cases} 
      \theta_1\langle f,g\rangle, & g\in \mathbf{G\setminus S} \\
      \theta_2\langle h,g\rangle, & g\in \mathbf{S} 
   \end{cases}.
    $$
    The latter implies that, for every $g \in \mathbf{G}$ there exists $\theta_g \in\{\theta_1,\theta_2\}$ such that
    \begin{equation}
        \langle \theta_1 f+\theta_2 h, g\rangle  = \theta_g \langle f+h, g\rangle.
    \end{equation}
    Since by assumption $\mathbf{G}$ does $\{\theta_1,\theta_2\}$-PR, it follows that there exists $\theta\in \{\theta_1,\theta_2\}$ such that  $\theta_1 f+\theta_2 h = \theta(f+h)$, or equivalently, $(\theta_1-\theta)f + (\theta_2-\theta)h=0$. If $\theta=\theta_1$, then from $h\neq 0$ we get $\theta_2=\theta$, which contradicts the assumption that $\theta_1 \neq \theta_2$. 
    The case when $\theta = \theta_2$ leads to an analogous contradiction.

    \textbf{Sufficiency}. Assume that $\mathbf{G}$ has the complement property but fails $\{\theta_1,\theta_2\}$-PR. Then we can find $f,h\in H$, with $f\neq \theta_1h$ and $f\neq \theta_2h$ such that for every $g\in \mathbf{G}$ there exists $\theta_g \in \{\theta_1,\theta_2\}$ so that
    $$
    \langle f,g\rangle = \theta_g\langle h,g\rangle
    $$
    Define $\mathbf{S} \coloneqq \{g\in \mathbf{G}: \langle f,g\rangle=\theta_1\langle h,g\rangle\}$. Using the complement property, at least one of $\mathbf{S}$ and $\mathbf{G\setminus S}$ is complete. Suppose the former. Then since $\langle f-\theta_1 h, g\rangle =0$ for all $g\in \mathbf{S}$ we obtain $f-\theta_1 h=0$, giving a contradiction. The case where $\mathbf{G\setminus S}$ is complete gives a similar contradiction.
\end{proof}

Notice, that the latter proposition implies that if $\Theta \subseteq \T$ and $\Theta' \subseteq \T$ satisfy $|\Theta| = |\Theta'|=2$, then $\mathbf{G} \subseteq H$ does $\Theta$-PR if and only if it does $\Theta'$-PR. This observation motivates the following definition.

\begin{definition}\label{def:2PR}
    We say that $\mathbf{G}\subseteq H$ does $2$-PR if it does $\Theta$-PR for any (hence all) $\Theta\subseteq \T$ with $|\Theta|=2$.
\end{definition}

Next, we consider the question how real phase retrieval fits into the framework of 2-PR when one passes to the complexification of a Hilbert space. Recall that given a real Hilbert space $H$ we define its complexification as a complex Hilbert space $H_\C$ given by $H_\C = H\oplus iH$, with addition 
$$(f_1 + if_2) + (h_1 + ih_2) := (f_1+h_1) + i(f_2+h_2)$$
and multiplication by complex scalars $\lambda = \lambda_1+i\lambda_2$
$$(\lambda_1+i\lambda_2)(f_1+if_2):=(\lambda_1f_1 - \lambda_2f_2) + i(\lambda_2f_1 + \lambda_1f_2).$$
The inner product on $H_\C$ is defined in the natural way 
$$\langle f_1+if_2,h_1+ih_2\rangle:=\langle f_1,h_1\rangle - \langle f_2,h_2\rangle + i(\langle f_2,h_1\rangle + \langle f_1,h_2\rangle).$$

Although real and complex phase retrieval behave differently in general, the case of two phases is special: the two-phase ambiguity already captures exactly the sign ambiguity inherent in real phase retrieval. This connection is made precise in the next statement.

\begin{corollary}
    Let $H$ be a real Hilbert space and let $\mathbf{G} \subseteq H$. Then $\mathbf{G}$ does (real) PR in $H$ if and only if $\mathbf{G}$ does $2$-PR in $H_\C$.
\end{corollary}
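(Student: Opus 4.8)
The plan is to route the entire equivalence through the complement property, which characterizes both sides simultaneously. By Proposition \ref{prop:cp2}, for a real Hilbert space $H$ the system $\mathbf{G}$ does real PR in $H$ if and only if $\mathbf{G}$ has the complement property in $H$. By Proposition \ref{prop:cp} together with Definition \ref{def:2PR}, the system $\mathbf{G}$ does $2$-PR in $H_\C$ if and only if $\mathbf{G}$ has the complement property in $H_\C$. Hence the corollary reduces to the single assertion that $\mathbf{G}$ has the complement property as a subset of $H$ if and only if it has the complement property as a subset of $H_\C$.

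To establish this, I would prove a completeness-transfer lemma: for any $\mathbf{S}\subseteq H$, the set $\mathbf{S}$ is complete in $H$ if and only if it is complete in $H_\C$. Since completeness is equivalent to triviality of the orthogonal complement, it suffices to relate the orthogonal complement of $\mathbf{S}$ computed in $H$ to the one computed in $H_\C$. Each $g\in\mathbf{S}$ embeds into $H_\C$ as $g+i0$, so for $f=f_1+if_2\in H_\C$ the definition of the inner product on $H_\C$ gives
$$\langle f_1+if_2,\,g\rangle_{H_\C}=\langle f_1,g\rangle_H+i\langle f_2,g\rangle_H.$$
Thus $f$ annihilates $\mathbf{S}$ in $H_\C$ precisely when both $f_1$ and $f_2$ annihilate $\mathbf{S}$ in $H$; equivalently, the orthogonal complement of $\mathbf{S}$ in $H_\C$ is the complexification $\mathbf{S}^{\perp_H}\oplus i\,\mathbf{S}^{\perp_H}$ of its real orthogonal complement. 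In particular the complement in $H_\C$ is trivial if and only if the complement in $H$ is trivial, which is exactly the claimed equivalence of completeness.

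Applying this lemma to both $\mathbf{S}$ and $\mathbf{G}\setminus\mathbf{S}$ for every $\mathbf{S}\subseteq\mathbf{G}$ transfers the complement property verbatim between $H$ and $H_\C$, and chaining the three equivalences closes the argument. The only genuinely substantive step is the completeness-transfer lemma, and even there the work is confined to the short inner-product computation above. The main point to be careful about is that completeness in $H$ must be understood with respect to the \emph{real} linear span, so that one is comparing real orthogonality in $H$ with complex orthogonality in $H_\C$ rather than silently conflating the two scalar fields; once this is kept straight, the decomposition of the complement is immediate.
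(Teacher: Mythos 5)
Your proposal is correct and follows essentially the same route as the paper: both reduce the statement to the equivalence of the complement property in $H$ and in $H_\C$ via Proposition \ref{prop:cp} and Proposition \ref{prop:cp2}, and both establish the completeness transfer through the identity $\langle f_1+if_2,g\rangle_{H_\C}=\langle f_1,g\rangle_H+i\langle f_2,g\rangle_H$ for $g\in H$. Your explicit identification $\mathbf{S}^{\perp_{H_\C}}=\mathbf{S}^{\perp_H}\oplus i\,\mathbf{S}^{\perp_H}$ is a slightly cleaner packaging of the paper's two-directional argument, but the content is identical.
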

\begin{proof}
    Since real PR and $2$-PR are equivalent to the (real and complex) complement property, it's enough to show that $\mathbf{G}$ has the complement property in $H$ if and only if it has the complement property in $H_\C$. We fix $\mathbf{S}\subseteq \mathbf{G}$. In both of the implications we additionally assume $\mathbf{S}$ is complete, as the second case of $\mathbf{G\setminus S}$ being complete is proved the same way.

    \textbf{Necessity}. Assume $\mathbf{S}$ is complete in $H$. To show that $\mathbf{S}$ is complete in $H_\C$, fix any vector $f\in H_\C$ such that for every $g\in \mathbf{S}$ we have $\langle f,g\rangle=0.$ We write $f=f_1+if_2$ where $f_1,f_2\in H$. Then for every $g\in \mathbf{S}$ we have
    $$0 = \langle f,g\rangle = \langle f_1, g\rangle + i\langle f_2,g\rangle.$$
    Since $\langle f_j,g\rangle \in \R$ for $j=1,2$, this forces $\langle f_j,g\rangle=0$ for $j=1,2$. Due to completeness of $\mathbf{S}$ in $H$ we get $f_1,f_2=0$. Thus $f=0$, which gives completeness of $\mathbf{S}$ in $H_\C$.

    \textbf{Sufficiency}. Assume $\mathbf{S}$ is complete in $H_\C$. Fix any $f\in H$ such that $\langle f,g\rangle=0$ for every $g\in \mathbf{S}$. Since $\mathbf{S}$ is complete in $H_\C$ and since $f$ can be identified as an element of $H_\C$, we get that $f=0$, which gives completeness of $\mathbf{S}$ in $H$.
\end{proof}

\section{Characterization of $\Theta$-PR}\label{sec:characteriztion}

\subsection{Characterization for countable $\Theta$}

In this section we provide a concrete characterization of all systems in a Hilbert space that do $\Theta$-PR for a countable set $\Theta\subseteq \T$. When $|\Theta| = 2$, such a characterization was obtained in Section \ref{sec:complement_property} in terms of the complement property. Theorem \ref{thm:m1} provides a substantial generalization of this criterion to arbitrary countable phase sets $\Theta$. 

\begin{proof}[Proof of Theorem \ref{thm:m1}]
\textbf{Necessity.}
Assume that $\mathbf{G}$ fails $\Theta$-PR. We have to construct a cover $\{ \mathbf{G}_j \}_{j \in \N}$ and a sequence $\{ x_j \}_{j \in \N} \subseteq H$ satisfying properties (1), (2) and (3). By Lemma \ref{lma:failure_witnessing}, we can pick linearly independent $f,h \in H$ such that for every $g \in \mathbf{G}$ there exists $\theta_g \in \Theta$ with
$$
\langle f,g \rangle = \theta_g \langle h,g \rangle.
$$
For every $j \in \N$ define $\mathbf{G}_j \subseteq \mathbf{G}$ via
$$
\mathbf{G}_j \coloneqq \{ g \in \mathbf{G} : \langle f,g \rangle = \theta_j \langle h,g \rangle \}.
$$
Since for every $g \in \mathbf{G}$ there exists $j \in \N$ such that $\langle f,g \rangle = \theta_j \langle h,g \rangle$, it follows that $\{ \mathbf{G}_j \}_{j \in \N}$ is a cover of $\mathbf{G}$. Further, define $x_j \in H$ by
$$
x_j \coloneqq f-\theta_j h.
$$
Since $f,h$ are linearly independent we have $x_j \neq 0$ for all $j \in \N$. Moreover, for every $g \in \mathbf{G}_j$ we have
$$
\langle x_j, g \rangle =0
$$
which implies that $x_j \in \mathbf{G}_j^\perp$. Next, we show that $x_1,x_2$ are linearly independent: suppose by contradiction that $x_1,x_2$ are linearly dependent. Then there exists $\alpha \in \C$ such that $x_1 = \alpha x_2$ which is equivalent to
$$
f-\theta_1 h = \alpha ( f-\theta_2 h ).
$$
Rearranging shows that
$$
(1-\alpha) f + (-\theta_1 + \alpha \theta_2) h = 0.
$$
Since $f,h$ are linearly independent we have
$$
1-\alpha = -\theta_1 + \alpha \theta_2 = 0.
$$
The latter implies that $\theta_1 = \theta_2$ which contradicts the assumption $\theta_1 \neq \theta_2$.

It remains to show property (3). To do so, we observe that for every $z \in \C$ we have the vector identity
\begin{equation}\label{eq:vi}
    (\theta_2 - \theta_1)(f-zh) = (z-\theta_1)(f-\theta_2 h) - (z-\theta_2)(f-\theta_1h).
\end{equation}
Setting $z = \theta_j$, dividing by $\theta_2 - \theta_1 \neq 0$, and using the definition of $x_j$, it follows that
$$
x_j = \frac{\theta_j - \theta_1}{\theta_2 - \theta_1} x_2 - \frac{\theta_j - \theta_2}{\theta_2 - \theta_1} x_1.
$$

\textbf{Sufficiency.}
Assume that $\{ \mathbf{G}_j \}_{j \in \N}$ and $\{ x_j \}_{j \in \N}$ exist and satisfy (1), (2) and (3). We have to show that $\mathbf{G}$ fails $\Theta$-PR. To do so, let $f$ and $h$ be the unique solution to the system given by
$$
f-\theta_1 h = x_1, \quad f-\theta_2 h = x_2.
$$
Note that such a unique solution exists as the matrix $\begin{pmatrix}
    1 & -\theta_1 \\
    1 & -\theta_2
\end{pmatrix}$ is invertible. Moreover, linear independence of $x_1$ and $x_2$ gives linear independence of $f$ and $h$. Hence to show that $\mathbf{G}$ fails $\Theta$-PR on $f$ and $h$, it's enough to show that for every $g\in \mathbf{G}$ we have $\langle f,g\rangle = \theta_g\langle h,g\rangle$ for some $\theta \in \Theta$.

First, from the definition of $x_1$ and $x_2$ together with property (1), if $g\in \mathbf{G}_j$ for $j=1,2$ then $\langle x_j,g\rangle=0$, which gives $\langle f,g\rangle = \theta_j\langle h,g\rangle$. Next, fix $j\geq 3$. Using the vector identity \eqref{eq:vi} with $z=\theta_j$ and property (3) we get 
$$
f-\theta_j h = \frac{\theta_j - \theta_1}{\theta_2 - \theta_1} x_2 - \frac{\theta_j - \theta_2}{\theta_2 - \theta_1} x_1 = x_j.
$$
Since $x_j\in \mathbf{G}^\perp_j$ by assumption, we have $\langle f-\theta_j h,g\rangle = \langle x_j,g\rangle=0$ for all $g\in \mathbf{G}_j$, or equivalently $\langle f,g\rangle = \theta_j\langle h,g\rangle$. 
\end{proof}

\subsection{Cyclic $\Theta$}

If the set $\Theta$ carries an algebraic structure in the sense that it is generated by a single phase $\omega \in \Theta$, then Theorem \ref{thm:m1} admits a particularly simple reformulation. In this case, the criterion for failure of $\Theta$-PR can be expressed in terms of a second-order recurrence relation.

\begin{proof}[Proof of Proposition \ref{prop:recurrence}] 
    Since $\omega \in \T\setminus\{1\}$ it holds that $\omega \neq \omega^2$. According to Theorem \ref{thm:m1}, it suffices to show that
    \begin{equation}\label{eq:rec_rel}
        x_ j = \frac{\omega^j-\omega}{\omega^2-\omega}x_2 - \frac{\omega^j-\omega^2}{\omega^2-\omega}x_1, \quad j \geq 3,
    \end{equation}
    is equivalent to the recurrence relation
    $$
    x_j = (1+\omega)x_{j-1} - \omega x_{j-2}, \quad j \geq 3.
    $$
    To do so, observe that the relation \eqref{eq:rec_rel} is equivalent to
\[
x_j = \frac{\omega^{j-1}-1}{\omega-1}\,x_2 - \frac{\omega^{j-1}-\omega}{\omega-1}\,x_1.
\]
For $j \geq 3$, let $A_j=\sum_{k=0}^{j-2} \omega^k$ and $B_j=\omega \sum_{k=0}^{j-3} \omega^k$.
Using the identities
$$
\frac{\omega^{j-1}-1}{\omega-1} = \sum_{k=0}^{j-2} \omega^k,
\quad
\frac{\omega^{j-1}-\omega}{\omega-1} = \sum_{k=1}^{j-2} \omega^k,
$$
we can rewrite $x_j$ as
$$
x_j = A_j x_2
-
B_j x_1.
$$
A direct calculation shows that
$$
A_j = (1+\omega)A_{j-1} - \omega A_{j-2}, \quad j \geq 3.
$$
Indeed, since $A_j = A_{j-1} + \omega^{j-2}$ and
$A_{j-1} = A_{j-2} + \omega^{j-3}$, we have
$$
(1+\omega)A_{j-1} - \omega A_{j-2}
= (1+\omega)A_{j-1} - \omega(A_{j-1} - \omega^{j-3})
= A_j.
$$
An analogous calculation implies that
$$
B_j = (1+\omega)B_{j-1} - \omega B_{j-2}, \quad j \geq 3.
$$
Therefore, for every $j \geq 3$, we have
\begin{align*}
x_j
&= A_j x_2 - B_j x_1 \\
&= (1+\omega)(A_{j-1} x_2 + B_{j-1} x_1)
   - \omega(A_{j-2} x_2 + B_{j-2} x_1) \\
&= (1+\omega)x_{j-1} - \omega x_{j-2}.
\end{align*}
\end{proof}

If $\Theta$ consists of three elements, from Theorem \ref{thm:m1} we can also deduce that the ability of a system to do $\Theta$-PR is independent of the particular elements in $\Theta$ and depends only on the cardinality of the set. In this special case we get the following characterization, which will be used in Section \ref{sec:failure} to study failure of $\Theta$-PR for systems in $\C^d$

\begin{corollary}\label{cor:3_pr_characterization}
    Let $\Theta = \{\theta_1,\theta_2,\theta_3\}\subseteq \T$ be with pairwise distinct elements. Then $\mathbf{G} \subseteq H$ fails $\Theta$-PR if and only if there exist $\mathbf{G}_1, \mathbf{G}_2, \mathbf{G}_3\subseteq \mathbf{G}$ with $\mathbf{G}_1 \cup \mathbf{G}_2 \cup \mathbf{G}_3 = \mathbf{G}$ and $x_1,x_3,x_3 \in H$ such that 
    \begin{enumerate}
        \item $0\neq x_j \in \mathbf{G}_j^\perp$ for $j=1,2,3$,
        \item $x_1$ and $x_2$ are linearly independent,
        \item $x_3 \in \lspan\{x_1,x_2\}$.
    \end{enumerate}
\end{corollary}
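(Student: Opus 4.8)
\emph{Overall approach.} The statement is the specialization of Theorem \ref{thm:m1} to a three-element set, and the plan is to extract it from that theorem and its proof, paying attention to one genuinely new difficulty in the sufficiency direction. For \textbf{necessity}, I would run the necessity argument of Theorem \ref{thm:m1} verbatim with only three indices: assuming $\mathbf{G}$ fails $\Theta$-PR, Lemma \ref{lma:failure_witnessing} supplies linearly independent $f,h$ witnessing the failure, and setting
$$
\mathbf{G}_j = \{ g \in \mathbf{G} : \langle f,g\rangle = \theta_j \langle h,g\rangle \}, \qquad x_j = f - \theta_j h \qquad (j=1,2,3),
$$
yields a cover $\{\mathbf{G}_1,\mathbf{G}_2,\mathbf{G}_3\}$ of $\mathbf{G}$ with $0\neq x_j\in\mathbf{G}_j^\perp$, and $x_1,x_2$ linearly independent because $f,h$ are. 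Applying the vector identity \eqref{eq:vi} with $z=\theta_3$ gives $x_3 = \frac{\theta_3-\theta_1}{\theta_2-\theta_1}x_2 - \frac{\theta_3-\theta_2}{\theta_2-\theta_1}x_1$, which in particular lies in $\lspan\{x_1,x_2\}$, establishing (1)--(3).

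\emph{Sufficiency, generic case.} Here the data only guarantee $x_3\in\lspan\{x_1,x_2\}$, which is strictly weaker than the precise relation appearing in Theorem \ref{thm:m1}, so I cannot simply reuse its sufficiency proof. The key observation is that each $\mathbf{G}_j^\perp$ is a subspace, so condition (1) is preserved under replacing $x_j$ by any nonzero scalar multiple $c_j x_j$. Writing $x_3 = a x_1 + b x_2$ (with $x_3\neq 0$), when both $a,b\neq 0$ I would choose nonzero scalars $c_1,c_2,c_3$ matching the coefficients of $x_1,x_2$ in
$$
c_3 x_3 = \frac{\theta_3-\theta_1}{\theta_2-\theta_1}\,c_2 x_2 - \frac{\theta_3-\theta_2}{\theta_2-\theta_1}\,c_1 x_1;
$$
taking $c_3=1$ forces $c_1 = -\tfrac{(\theta_2-\theta_1)a}{\theta_3-\theta_2}$ and $c_2 = \tfrac{(\theta_2-\theta_1)b}{\theta_3-\theta_1}$, both nonzero precisely because $a,b\neq 0$. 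I then define $f,h$ as the unique solution of $f-\theta_1 h = c_1 x_1$, $f-\theta_2 h = c_2 x_2$; the identity \eqref{eq:vi} with $z=\theta_3$ forces $f-\theta_3 h = c_3 x_3$, and since $c_j x_j\in\mathbf{G}_j^\perp$ we obtain $\langle f,g\rangle=\theta_j\langle h,g\rangle$ for all $g\in\mathbf{G}_j$. Linear independence of $c_1x_1,c_2x_2$ yields linear independence of $f,h$, so $\mathbf{G}$ fails $\Theta$-PR.

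\emph{Sufficiency, degenerate case, and the main obstacle.} The delicate point, and the step I expect to be the real obstacle, is the degenerate configuration $a=0$ or $b=0$, where $x_3$ is parallel to $x_1$ or $x_2$. There the coefficient matching above forces some $c_j=0$ and the construction collapses (it makes $f$ and $h$ collinear), so the explicit witness is unavailable. I would instead reduce to the two-phase regime: if, say, $x_3 = b x_2$ with $b\neq 0$, then $x_2\in\mathbf{G}_2^\perp\cap\mathbf{G}_3^\perp = (\mathbf{G}_2\cup\mathbf{G}_3)^\perp$, so taking $\mathbf{S}=\mathbf{G}_1$ makes both $\mathbf{S}$ (whose complement contains $x_1$) and $\mathbf{G}\setminus\mathbf{S}\subseteq\mathbf{G}_2\cup\mathbf{G}_3$ (whose complement contains $x_2$) incomplete. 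Thus $\mathbf{G}$ fails the complement property, hence fails $\{\theta_1,\theta_2\}$-PR by Proposition \ref{prop:cp}, and since $\{\theta_1,\theta_2\}\subseteq\Theta$, Lemma \ref{lemma:theta_pr_implies_theta'_pr} forces $\mathbf{G}$ to fail $\Theta$-PR; the case $b=0$ is symmetric. This dichotomy between the generic construction and the degenerate reduction is exactly the feature that makes condition (3) of the corollary the correct weakening of Theorem \ref{thm:m1}.
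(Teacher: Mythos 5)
Your proposal is correct and follows essentially the same route as the paper's proof: necessity is the three-index specialization of Theorem \ref{thm:m1}, and sufficiency splits into exactly the paper's dichotomy, reducing the degenerate case $x_3 \in \lspan\{x_1\}$ or $\lspan\{x_2\}$ to failure of the complement property (hence $2$-PR via Proposition \ref{prop:cp} and Lemma \ref{lemma:theta_pr_implies_theta'_pr}) and, in the generic case $x_3 = ax_1 + bx_2$ with $a,b \neq 0$, rescaling $x_1,x_2$ by nonzero scalars so that the exact relation of Theorem \ref{thm:m1} holds. In fact your scalars $c_1 = -a\frac{\theta_2-\theta_1}{\theta_3-\theta_2}$ and $c_2 = b\frac{\theta_2-\theta_1}{\theta_3-\theta_1}$ are the correct ones, whereas the paper's auxiliary vectors $x_1',x_2'$ have the denominators $\theta_3-\theta_1$ and $\theta_3-\theta_2$ interchanged (a typo there, since with the paper's constants the displayed combination evaluates to $a\frac{\theta_3-\theta_2}{\theta_3-\theta_1}x_1 + b\frac{\theta_3-\theta_1}{\theta_3-\theta_2}x_2$ rather than $ax_1+bx_2$), so your version repairs that detail.
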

\begin{proof}
    The forward implication is an immediate consequence of Theorem \ref{thm:m1} with $\Theta$ consisting of three elements. Hence, it suffices to show the reverse implication.

    To that end, suppose there exist a cover $\{ \mathbf{G}_1,\mathbf{G}_2,\mathbf{G}_3 \}$ of $\mathbf{G}$ and $x_1,x_2,x_3 \in H$ satisfying the conditions (1), (2) and (3). 

    If $x_3 = ax_1$ for some $a \neq 0$, then $\mathbf{G}$ fails the complement property since $\mathbf{S}=\mathbf{G}_2$ and $\mathbf{G} \setminus \mathbf{S}  = \mathbf{G}_1 \cup \mathbf{G}_3$ are both incomplete. Thus, $\mathbf{G}$ fails $2$-PR by Proposition \ref{prop:cp} and therefore it also fails $\Theta$-PR from Lemma \ref{lemma:theta_pr_implies_theta'_pr}.

    An analogous argument as before implies that if $x_3=ax_2$ for some $a \neq 0$, then $\mathbf{G}$ fails $\Theta$-PR.
    
    It remains to consider the case when $x_3 =ax_1+bx_2$ for some $a,b\neq 0$. In this case, define auxillary vectors $x_1',x_2',x_3'$ via
$$
x_1' = -a\frac{\theta_2-\theta_1}{\theta_3-\theta_1}x_1, \quad x_2'=b\frac{\theta_2-\theta_1}{\theta_3-\theta_2}x_2, \quad  x_3'=x_3.
$$
Then $0 \neq x_j' \in \mathbf{G}_j^\perp$ for every $j \in \{ 1,2,3 \}$. Moreover $x_1'$ and $x_2'$ are linearly independent. Finally
    \begin{align*}
        \frac{\theta_3-\theta_1}{\theta_2-\theta_1}x_2' - \frac{\theta_3-\theta_2}{\theta_2-\theta_1}x_1' = ax_1 + bx_2 = x_3'.
    \end{align*}
    Consequently, Theorem \ref{thm:m1} shows that $\mathbf{G}$ fails $\Theta$-PR.
\end{proof}

Since the latter corollary shows that the property of a system doing $\Theta$-PR for a three-element set $\Theta$ is independent of the choice of elements in $\Theta$, this suggests the following definition, which may be viewed as a natural extension of Definition \ref{def:2PR}.

\begin{definition}
    We say $\mathbf{G}\subseteq H$ does $3$-PR if it does $\Theta$-PR for any (hence all) $\Theta\subseteq \T$ with $|\Theta|=3$.
\end{definition}

A natural question to ask is if this trend continues for all finite $\Theta\subseteq \T$. In Section \ref{sec:moebius} we will show that given $\Theta,\Theta'\subseteq\T$ with $|\Theta|=|\Theta'|=4$, the properties of doing $\Theta$-PR and $\Theta'$-PR are equivalent, provided that $\Theta$ and $\Theta'$ have the same cross ratios. Nevertheless, in Section \ref{sec:c2_characterization} we will observe that without extra assumptions on $\Theta$ and $\Theta'$, such an equivalence is in general false.

\subsection{Exponential systems}

This subsection is concerned with the proof of Theorem \ref{thm:lattice_density_characterization} and Theorem \ref{thm:ER_disconnected}. In order to prove these statements we start with some preliminary definitions and observations.

We define the Fourier transform $\ft f$ of a function $f \in L^1(\R) \cap \lt$ by
$$
\ft f(s) = \int_\R f(t) e^{-2\pi i s t} \, dt.
$$
The Fourier transform $\ft$ extends from $L^1(\R) \cap \lt$ to a unitary operator on $\lt$ in the usual way.
For $a >0$, we let $PW_a$ be the Paley-Wiener spaces of functions $f \in \lt$ such that the support of the Fourier transform of $f$ is contained in $[-\frac{a}{2},\frac{a}{2}]$,
$$
PW_a \colonequals \left \{ f \in \lt : \supp(f) \subseteq \left [-\frac{a}{2},\frac{a}{2} \right ] \right \}.
$$
We also define the abbreviation $PW \coloneqq PW_1$. The convolution of $f,g$ is given by
$$
(f * g)(x) = \int_\R f(x-y) g(y) \, dy,
$$
whenever this expression is well-defined. Recall that the support of the convolution satisfies the relation
$$
\supp(f * g) \subseteq \mathrm{cl} ( \supp(f) + \supp(g) )
$$
where $\mathrm{cl}(X)$ denotes the closure of a set $X$.

In order to prove Theorem \ref{thm:lattice_density_characterization}, we start with a convenient reformulation of Proposition \ref{prop:recurrence} which says that the study of $\Theta$-PR for an exponential system reduces to the study of zero-sets of functions in the Paley-Wiener space.

\begin{corollary}\label{cor:PW_recurrence}
Let $\Theta = \{ e^{2\pi i k/n} : k=0,\dots,n-1 \}$, let $\Lambda \subseteq \R$ and let $a>0$. Then $E(\Lambda)$ fails $\Theta$-PR in $L^2[0,a]$ if and only if there exist a cover $\{\Lambda_j\}_{j \in \N}$ of $\Lambda$ and functions $\{ x_j \}_{j \in \N} \subseteq PW_a \setminus \{ 0 \}$ satisfying
\begin{enumerate}
        \item $x_j$ vanishes on $\Lambda_j$,
        \item $x_1$ and $x_2$ are linearly independent,
        \item for every $j \geq 3$, it holds that $x_j = (1+e^{\frac{2\pi i }{n}})x_{j-1} - e^{\frac{2\pi i }{n}} x_{j-2}$.
    \end{enumerate}
\end{corollary}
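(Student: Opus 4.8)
The plan is to derive the statement from Proposition \ref{prop:recurrence} by transporting its abstract characterization from $L^2[0,a]$ to the Paley--Wiener space via the Fourier transform. First I would set $\omega=e^{2\pi i/n}$ and note that, since $\omega$ is a primitive $n$-th root of unity, the powers $\{\omega^j\}_{j\in\N}$ run exactly through $\Theta=\{e^{2\pi ik/n}:k=0,\dots,n-1\}$, so $\Theta$ is the cyclic phase set generated by $\omega\in\T\setminus\{1\}$ and Proposition \ref{prop:recurrence} applies (for $n\geq 2$; the degenerate case $n=1$ is the completeness statement of Lemma \ref{lma:completeness}). To center the Fourier support I would pass from $L^2[0,a]$ to $L^2[-\tfrac{a}{2},\tfrac{a}{2}]$: the domain translation $f\mapsto f(\cdot+\tfrac{a}{2})$ is a unitary map between these spaces carrying each $e_\lambda$ to a unimodular multiple of itself, so Lemma \ref{lemma:invariantTransform} shows that $E(\Lambda)$ does $\Theta$-PR in $L^2[0,a]$ if and only if it does so in $L^2[-\tfrac{a}{2},\tfrac{a}{2}]$. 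Applying Proposition \ref{prop:recurrence} in the latter space, failure of $\Theta$-PR is equivalent to the existence of a cover $\{\mathbf{G}_j\}_{j\in\N}$ of $E(\Lambda)$ and a sequence $\{x_j\}_{j\in\N}\subseteq L^2[-\tfrac{a}{2},\tfrac{a}{2}]$ with $0\neq x_j\in\mathbf{G}_j^\perp$, with $x_1,x_2$ linearly independent, and $x_j=(1+\omega)x_{j-1}-\omega x_{j-2}$ for $j\geq 3$.

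The central device for translating this data is the Fourier transform $\Phi=\ft$, which by the Paley--Wiener theorem restricts to a unitary bijection from the (zero-extended) functions in $L^2[-\tfrac{a}{2},\tfrac{a}{2}]$ onto $PW_a$, and which satisfies $\Phi(x)(\lambda)=\int_{-a/2}^{a/2}x(t)e^{-2\pi i\lambda t}\,dt=\langle x,e_\lambda\rangle$ for every $\lambda\in\R$. The crucial point is that each $\Phi(x)$ is the restriction to $\R$ of an entire function of exponential type; hence its pointwise values are meaningful and the notion of $\Phi(x)$ vanishing on a set $\Lambda_j\subseteq\R$ is well defined, which is exactly what is needed to match the orthogonality conditions.

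With $\Phi$ in place the two characterizations match term by term, and the equivalence follows by reading it in both directions. Covers $\{\mathbf{G}_j\}$ of $E(\Lambda)$ correspond bijectively to covers $\{\Lambda_j\}$ of $\Lambda$ through $\mathbf{G}_j=E(\Lambda_j)$, using injectivity of $\lambda\mapsto e_\lambda$. Because $x\in E(\Lambda_j)^\perp$ means $\langle x,e_\lambda\rangle=0$ for all $\lambda\in\Lambda_j$, applying $\Phi$ shows that this is equivalent to $y_j\coloneqq\Phi(x_j)$ vanishing on $\Lambda_j$; injectivity of $\Phi$ converts $x_j\neq 0$ into $y_j\neq 0$ and preserves the linear independence of $x_1,x_2$, while linearity of $\Phi$ transports the recurrence $x_j=(1+\omega)x_{j-1}-\omega x_{j-2}$ into $y_j=(1+\omega)y_{j-1}-\omega y_{j-2}$. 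The step I expect to require the most care is the bookkeeping of the Fourier support: one must justify the centering reduction to $[-\tfrac{a}{2},\tfrac{a}{2}]$ and invoke the Paley--Wiener theorem to guarantee both that $\Phi$ maps onto $PW_a$ and that its image consists of continuous functions, so that replacing the orthogonal-complement condition $x_j\in\mathbf{G}_j^\perp$ by the pointwise vanishing of $y_j$ on $\Lambda_j$ is genuinely an equivalence and not a mere formal rewriting.
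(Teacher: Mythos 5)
Your proof is correct and takes essentially the same route as the paper's: reduce to the centered interval $[-\tfrac{a}{2},\tfrac{a}{2}]$, apply Proposition \ref{prop:recurrence} with $\omega=e^{2\pi i/n}$, and use the Fourier/Paley--Wiener correspondence $\langle x,e_\lambda\rangle=\ft x(\lambda)$ to identify covers $\mathbf{G}_j=E(\Lambda_j)$ and convert the orthogonality conditions into pointwise vanishing on $\Lambda_j$. The only cosmetic differences are the direction of transport (you push forward with $\ft$, while the paper sets $y_j=\ift x_j$) and that you handle general $a$ directly via the translation unitary together with Lemma \ref{lemma:invariantTransform}, where the paper first rescales to $a=1$.
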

\begin{proof}
    It suffices to consider the case $a=1$ since the general case follows from scaling.
    
    We start by observing that $E(\Lambda)$ does $\Theta$-PR in $L^2[0,1]$ if and only if $E(\Lambda)$ does $\Theta$-PR in $L^2[-\frac{1}{2},\frac{1}{2}]$. By Proposition \ref{prop:recurrence} it suffices to show that the existence of a cover $\{\Lambda_j\}_{j \in \N}$ of $\Lambda$ and functions $\{ x_j \}_{j \in \N} \subseteq PW \setminus \{ 0 \}$ satisfying (1), (2) and (3) is equivalent to the existence of a cover $\{\mathbf{G}_j\}_{j \in \N}$ of $E(\Lambda)$ and a sequence $\{ y_j \}_{j \in \N} \subseteq L^2[-\frac{1}{2},\frac{1}{2}]$ satisfying
\begin{enumerate}
        \item $0 \neq y_j \in \mathbf{G}_j^\perp$ for every $j\in \N$,
        \item $y_1$ and $y_2$ are linearly independent,
        \item for every $j \geq 3$, the vector $y_j$ is given by the second-order recurrence relation $y_j = (1+\omega)y_{j-1} - \omega y_{j-2}$ with $\omega = e^{\frac{2\pi i }{n}}$.
    \end{enumerate}

    To do so, we observe that the functions $x_j \in PW$ satisfy for every $\lambda \in \Lambda$ the relation
    \begin{equation}\label{eq:reformulation}
        x_j(\lambda) = \ft \ift x_j(\lambda) = \langle \ift x_j, e_\lambda \rangle.
    \end{equation}
    Since $\ft$ is unitary we have that $x_j \neq 0$ if and only if $\ift x_j \neq 0$. Defining $y_j \coloneqq \ift x_j$ and $\mathbf{G}_j \coloneqq E(\Lambda_j)$ implies that $x_j$ vanishes on $\Lambda_j$ if and only if $y_j \in \mathbf{G}_j^\perp$. Moreover, $\supp(y_j) \subseteq [-\frac{1}{2},\frac{1}{2}]$ and hence every $y_j$ can be identified with an element in $L^2[-\frac{1}{2},\frac{1}{2}]$. The fact that properties (2) and (3) in the language of covers of $\Lambda$ are equivalent to properties (2) and (3) in the language of covers of $E(\Lambda)$ follow directly from the definition of $y_j$ and $\mathbf{G}_j$.
\end{proof}

We are ready to derive Theorem \ref{thm:lattice_density_characterization}

\begin{proof}[Proof of Theorem \ref{thm:lattice_density_characterization}]
Without loss of generality, we can assume that $a=1$. The general result follows by scaling. Moreover, we can assume that $n \geq 2$, as the case $n=1$ is a consequence of the classical Fourier uniqueness theorem.

\textbf{Sufficiency.}
For $\lambda \in \Lambda$ define as usual $e_\lambda(x) = e^{2\pi i \lambda x}$. Let $f,h \in L^2[0,1]$ and suppose that for every $\lambda \in \Lambda$ there exists $\theta_\lambda \in \Theta$ such that
\begin{equation}\label{eq:eeeee}
    \langle f,e_\lambda \rangle = \theta_\lambda \langle h,e_\lambda \rangle.
\end{equation}
Using the definition of the Fourier transform it follows that for $\lambda \in \Lambda$, the identity \eqref{eq:eeeee} is equivalent to the property that
$$
\ft(f-\theta_\lambda h)(\lambda) =0.
$$
Now let $\omega_j=e^{2\pi i j/n}$ and define for $j \in \{ 0, \dots, n-1 \}$ functions $p_j \in L^2[0,1]$ via
\begin{equation}\label{eq:prod_analytic}
        p_j \coloneqq  f - \omega_j h.
    \end{equation}
Consider each $p_j$ as a function in $L^2(\R)$ with support in $[0,1]$. By our assumption on $f,h$ we have that
$$
\prod_{j=0}^{n-1} \ft(p_j)(\lambda) = 0, \quad \lambda \in \Lambda.
$$
Using the convolution theorem for the Fourier transform, the latter relation is equivalent to the property that the $n$-fold convolution
$$
p \coloneqq p_1 * \dots * p_n
$$
satisfies
$$
\ft p(\lambda) = \langle p, e_\lambda \rangle =0, \quad \lambda \in \Lambda.
$$
Notice that the support of $p$ is contained in
$$
[0,1] + [0,1] + \dots + [0,1] = [0,n],
$$
and moreover, $p \in L^2[0,n]$ by Young's convolution theorem. Since $D(\Lambda) \geq n$, it follows from Fourier uniqueness that $p=0$. Applying the Fourier transform shows that
$
\ft p = \prod_{j=1}^n \ft p_j
$
vanishes identically. Since $p_j$ has compact support, each factor $\ft p_j$ is analytic. Hence, one of the factors must vanish identically which means that there exists $k \in \{0, \dots, n-1 \}$ so that $p_k = f- \omega_k h =0$. This shows that $E(\Lambda)$ does $\Theta$-PR in $L^2[0,1]$.

\textbf{Necessity.}
    Let $\Lambda$ be a lattice of density $D(\Lambda) < n$, i.e., $\Lambda = \alpha \Z$ for some $\alpha > n$. We prove that $E(\Lambda)$ does not do $\Theta$-PR in $L^2[0,1]$. In order to prove this, we show that the assumptions of Corollary \ref{cor:PW_recurrence} are satisfied.
    
    To do so, we define $\Lambda_j = \alpha(n\Z+j)$ for $j=0,...,n-1$, which implies that $\{ \Lambda_j \}_{j=0}^{n-1}$ is a cover of $\Lambda$. Next, we construct the functions $x_j$. To do so, let
    $$
    \xi \coloneqq \frac{1}{2n\alpha}.
    $$
    Since $2\leq n<\alpha < \infty$, it holds that $0 < \xi < \frac{1}{2}$. Further, define functions $S_j$ via
    $$
    S_j(x) = \sin \left ( 2\pi \xi x - \frac{j\pi}{n} \right ), \quad j \in \{ 0, \dots, n-1 \}.
    $$
    Clearly, each $S_j$ vanishes on $\Lambda_j$. Using the trigonometric identity
    $$
    \sin(v-v') + \sin(v+v') = 2 \sin(v)\cos(v'), \quad v,v' \in \R,
    $$
    and substituting $v = 2\pi \xi x  - (j-1)\frac{\pi}{n}$ and $v' = \frac{\pi}{n}$, it follows that
    $$
    S_j = 2 \cos(\tfrac{\pi}{n}) S_{j-1} - S_{j-2}, \quad j \geq 3
    $$
    Next, choose $\Phi \in PW$ such that $\hat \Phi$ has support in $[-(\frac{1}{2}-\xi),\frac{1}{2}-\xi]$, and $\Phi(0) \neq 0$. To obtain such a function, we can take $\phi \in L^2(\R) \setminus \{0 \}$ satisfying
    $$
    \supp(\phi) \subseteq [-(\tfrac{1}{2}-\xi),\tfrac{1}{2}-\xi], \quad \int_\R \phi(x) \, dx \neq 0,
    $$
    and define $\Phi \coloneqq \ift \phi$.
    Since the sine-function $S_j$ can be written as a linear combination of the exponentials $e^{2\pi i \xi x}$ and $e^{-2\pi i \xi x}$,
    $$
    S_j(x) = C e^{2\pi i \xi x} + C'e^{-2\pi i \xi x}, \quad C=-\frac{i}{2}\, e^{ -\frac{i j \pi}{n}}, \quad C' = \frac{i}{2}\, e^{\frac{i j \pi}{n}},
    $$
    it follows that multiplying $\Phi$ with $S_j$ shifts the support of $\mathcal{F}(S_j \Phi)$ by $\pm \xi$. This and the choice of $\Phi$ implies that $S_j \Phi$ is an element of $PW$. It also holds that $S_j \Phi$ is not the zero function: if it would be zero, then the product of the two analytic functions $C e^{2\pi i \xi x} + C'e^{-2\pi i \xi x}$ and $\Phi$ would vanish identically. Since $\Phi \neq 0$, it follows that $C e^{2\pi i \xi x} + C'e^{-2\pi i \xi x}$ is the zero function. Using the linear independence of complex exponentials, we have $C=C'=0$, which gives a contradiction.
    
Now let $\omega \coloneqq e^{\frac{2\pi i }{n}}$, $\zeta \coloneqq e^{\frac{2\pi i }{2n}}$, and define $x_j \in PW$ via
    $$
    x_j \coloneqq \zeta^j S_j \Phi.
    $$
    Then $x_j \neq 0$ and $x_j$ vanishes on $\Lambda_j$.
    Using the recurrence relation for $S_j$ it follows from a direct calculation that
    \begin{equation}\label{eq:Tj_relation}
        x_j = 2 \cos(\tfrac{\pi}{n}) \zeta \cdot  x_{j-1} - \zeta^2 \cdot  x_{j-2}.
    \end{equation}
    By Euler's identity we have
    $$
    2 \cos(\tfrac{\pi}{n}) \zeta = 2 \frac{e^{i\frac{\pi}{n}} + e^{-i\frac{\pi}{n}}}{2} e^{\frac{2\pi i }{2n}} = 1+\omega.
    $$
    Combining this with $\zeta^2 = \omega$, shows that \eqref{eq:Tj_relation} can be re-written as
    $$
    x_j = (1+\omega) x_{j-1} - \omega x_{j-2}.
    $$
\end{proof}

Our final goal for this subsection is to prove Theorem \ref{thm:ER_disconnected}. To do so, we make use of the following Lemma.

\begin{lemma}\label{lma:real_line_onto_arc}
For $v_1 > v_2 > 0$ and $\beta \in \mathbb{R}$ define
$$
m(z) \coloneqq e^{i\beta} \frac{z + i v_1}{z - i v_1} \frac{z - i v_2}{z + i v_2}.
$$
Then $m$ maps the real line onto the arc $\left\{ e^{it}:  t\in[\beta-L,\beta+L] \right\}$, where $L>0$ is given by
$$
L = 4\arctan\sqrt{\frac{v_1}{v_2}}-\pi.
$$
\end{lemma}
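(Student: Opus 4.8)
The plan is to separate modulus and argument. Since each factor is a Blaschke-type quotient, I first note that for real $z$ and any $v>0$ one has $|z+iv|=\sqrt{z^2+v^2}=|z-iv|$, so both $\left|\frac{z+iv_1}{z-iv_1}\right|$ and $\left|\frac{z-iv_2}{z+iv_2}\right|$ equal $1$. Hence $|m(z)|=1$ for every $z\in\R$, i.e. $m(\R)\subseteq\T$, and the whole problem reduces to determining the range of $\arg m(z)$.

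Next I would compute the argument explicitly. Choosing the continuous branch $\arg(z+iv)=\tfrac{\pi}{2}-\arctan(z/v)$ (valid since $z+iv$ lies in the open upper half-plane) and $\arg(z-iv)=-\tfrac{\pi}{2}+\arctan(z/v)$, a short subtraction gives $\arg\frac{z+iv}{z-iv}=\pi-2\arctan(z/v)$ and $\arg\frac{z-iv}{z+iv}=-\pi+2\arctan(z/v)$. Adding the two factors and the constant $\beta$ yields the clean formula $\arg m(z)=\beta+2\big(\arctan(z/v_2)-\arctan(z/v_1)\big)$. As a sanity check, $m(0)=e^{i\beta}$, consistent with this expression.

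It then remains to find the range of $F(z)\coloneqq \arctan(z/v_2)-\arctan(z/v_1)$ over $\R$. The function $F$ is odd and continuous with $F(0)=F(\pm\infty)=0$, so it suffices to maximize it on $(0,\infty)$. Differentiating, $F'(z)=\frac{v_2}{v_2^2+z^2}-\frac{v_1}{v_1^2+z^2}$, which vanishes exactly when $z^2=v_1v_2$ (using $v_1\neq v_2$). Evaluating at $z=\sqrt{v_1v_2}$ gives $z/v_2=\sqrt{v_1/v_2}$ and $z/v_1=1/\sqrt{v_1/v_2}$, and the identity $\arctan x+\arctan(1/x)=\tfrac{\pi}{2}$ for $x>0$ turns this into $F(\sqrt{v_1v_2})=2\arctan\sqrt{v_1/v_2}-\tfrac{\pi}{2}=:F_{\max}$. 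By oddness the minimum is $-F_{\max}$, so $\arg m(z)$ ranges over $[\beta-L,\beta+L]$ with $L=2F_{\max}=4\arctan\sqrt{v_1/v_2}-\pi$, matching the claim.

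Finally I would assemble the conclusion: $F$ is continuous on the connected set $\R$ and attains both extreme values (at $z=\pm\sqrt{v_1v_2}$), so by the intermediate value theorem $F(\R)$ is exactly the closed interval $[-F_{\max},F_{\max}]$; since $v_1>v_2$ forces $\sqrt{v_1/v_2}>1$ and hence $0<L<\pi$, the map $s\mapsto e^{i(\beta+s)}$ is injective on $[-L,L]$ and traces precisely the arc $\{e^{it}:t\in[\beta-L,\beta+L]\}$. I expect the only delicate points to be the bookkeeping of the continuous argument branches in the second step (ensuring no spurious $2\pi$ jump across $z=0$) and confirming that the interior critical points give the global — not merely local — extrema; the decay $F(\pm\infty)=0$ together with there being a single positive critical point settles the latter.
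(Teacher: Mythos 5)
Your proof is correct and takes essentially the same route as the paper's: both reduce the claim to finding the range of the continuous argument $\beta + 2\bigl(\arctan(z/v_2)-\arctan(z/v_1)\bigr)$, locate the critical points at $z=\pm\sqrt{v_1 v_2}$ by differentiation, evaluate the extremum via the identity $\arctan x+\arctan(1/x)=\tfrac{\pi}{2}$, and conclude using oddness, continuity, and decay at infinity. The only cosmetic difference is your branch choice $\arctan(z/v)$, which is continuous through $z=0$, whereas the paper works with $\arctan(v/x)$ and must separately set $\Phi(0):=0$.
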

\begin{proof}
    Fix $x \in \R$. As a preliminary step, it is easy to see that for every $v>0$ we have $\frac{x+iv}{x-iv}\in \T$. Writing $x + i v = r e^{i\theta}$ with $r = \sqrt{x^2 + v^2}$ and $\theta = \arctan \frac{v}{x},$ we get
$$
\frac{x + i v}{x - i v}
= e^{2 i \arctan(v/x)}.
$$
Consider the function $m(z)$ defined as in the statement of the lemma. Using the preliminary step for $v_1>v_2>0$ gives
$$
m(x)
= e^{i ( \beta + 2\arctan(v_1/x) - 2\arctan(v_2/x))}.
$$
Therefore
$$
\arg m(x)
= \beta
  + 2\arctan\!\frac{v_1}{x}
  - 2\arctan\!\frac{v_2}{x}.
$$
Now define
$$
\Phi(x) = 2\arctan\!\frac{v_1}{x}
  - 2\arctan\!\frac{v_2}{x}.
$$
We also put $\Phi(0):=0$. 
Then $\Phi$ is continuous on $\R$ with $\Phi(x) \to 0$ as $|x| \to \infty$. It suffices to show that $\max_{x\in \R}\Phi(x)=L$ and $\min_{x\in \R}\Phi(x)=-L$, from which we can deduce $\Phi(\R)=[-L,L]$, hence $\arg(m(x))=[\beta-L, \beta+L]$. An elementary calculation shows that
$$
\frac{d}{dx} \Phi(x) = \frac{2\,(v_1-v_2)\,(v_1 v_2 - x^2)}
{(x^2+v_1^2)(x^2+v_2^2)}.
$$
Since $v_1 > v_2 > 0$, it follows that $\Phi$ is strictly increasing on $(-\sqrt{v_1 v_2},\sqrt{v_1 v_2})$ and strictly decreasing on each of $(-\infty,-\sqrt{v_1 v_2})$ and $(\sqrt{v_1 v_2},\infty)$. Moreover, the only critical points of $\Phi$ are $\pm\sqrt{v_1 v_2}$. Since $\Phi$ is an odd function, we see that $\sqrt{v_1 v_2}$ yields the global maximum of $\Phi$ and $-\sqrt{v_1 v_2}$ the global minimum.

At $x=\sqrt{v_1 v_2}$, we have
$$
\Phi \left ( \sqrt{v_1 v_2} \right )
=2\left ( \arctan\sqrt{\frac{v_1}{v_2}}-\arctan\sqrt{\frac{v_2}{v_1}} \right ).
$$
For $\sigma>0$ the identity $\arctan \sigma+\arctan(1/\sigma)=\pi/2$ gives
$$
\arctan\sqrt{\frac{v_1}{v_2}}-\arctan\sqrt{\frac{v_2}{v_1}}
=2\arctan\sqrt{\frac{v_1}{v_2}}-\frac{\pi}{2},
$$
whence
$$
\Phi\bigl(\sqrt{v_1 v_2}\bigr)
=4\arctan\sqrt{\frac{v_1}{v_2}}-\pi
= L.
$$
By oddness, $\Phi(-\sqrt{v_1 v_2}) = -L$, which gives that $\Phi(\R)=[-L,L]$ as claimed.
\end{proof}

As a second ingredient for the proof of Theorem \ref{thm:ER_disconnected}, we require the classical Paley-Wiener theorem which says that a function $f$ belongs to the Paley-Wiener space $PW$ if and only if $f \in \lt$ and $f$ is the restriction to $\R$ of an entire function $F : \C \to \C$ of exponential type at most $\pi$ \cite{Young}. Recall that the latter means that for every $\varepsilon>0$ there exists $C_\varepsilon>0$ such that
\begin{equation}\label{eq:exp_type}
    |f(z)| \leq C_\varepsilon e^{(\pi+\varepsilon)|z|}, \quad z \in \C.
\end{equation}
We also observe that using equation \eqref{eq:reformulation} with $\Lambda = \R$, Theorem \ref{thm:ER_disconnected} admits the following reformulation, which will turn out more convenient to prove. Namely, $\Theta$ is totally disconnected if and only if for every $f,h \in PW$ the following implication holds:
\begin{equation}\label{eq:pw_line_impl}
    \forall t \in \R \, \exists \theta_t \in \Theta \ \text{such that} \ f(t) = \theta_t h(t) \implies f=\theta h \text{ for some } \theta\in\Theta.
\end{equation}

\begin{proof}[Proof of Theorem \ref{thm:ER_disconnected}]
\textbf{Necessity.} Suppose that $\Theta$ is not totally disconnected. We construct $f,h \in PW$ so that the implication \eqref{eq:pw_line_impl} fails.

To do so, observe that if $\Theta$ is not totally disconnected then there exist $\tilde \varepsilon>0$ and $\tilde \beta \in \R$ such that
\begin{equation}\label{eq:arc_inclusion}
    A \coloneqq \{ e^{i(\tilde\beta+t)} : t \in [-\tilde\varepsilon,\tilde\varepsilon] \} \subseteq \Theta.
\end{equation}
We choose $v_1>v_2>0$ such that
$$
0 < 4 \arctan \frac{v_1}{v_2} - \pi \leq \tilde\varepsilon.
$$
Let $s \in C_c^\infty(\R)$ be a smooth function with support in $[-\frac{1}{2},\frac{1}{2}]$ and put $S := \ift s$. Then $S$ is a Schwartz function that belongs to $PW$. In particular $S$ has an analytic extension to the complex plane. Now define
$$
f: \C \to \C, \quad f(z) = (z+iv_1)(z-iv_2) S(z).
$$
Clearly, $f$ is again a Schwartz function when considered as a function on the real line. Moreover, the prefactor $P(z) = (z+iv_1)(z-iv_2)$ does not affect the exponential type of $PS$ in the sense that both $PS$ and $S$ satisfy the groth estimte \eqref{eq:exp_type}. By the Paley-Wiener theorem, we have $f \in PW$. Let $m(z)$ be the function from Lemma \ref{lma:real_line_onto_arc} with $\varepsilon \coloneqq \tilde \varepsilon$ and $\beta \coloneqq \tilde \beta$. Observe that $f$ has the property that the set of poles of $m$ is a subset of the set of zeros of $f$. This allows us to define a second Schwartz function $h$ via
$$
h(z) \coloneqq \frac{1}{m(z)} f(z) = e^{-i \beta} (z-i v_1) (z+iv_2) S(z).
$$
By an analogous argument as for the function $f$, it follows that $h \in PW$. Thus, $f$ and $h$ are related via
$$
f=mh
$$
By Lemma \ref{lma:real_line_onto_arc} and the inclusion \eqref{eq:arc_inclusion}, it follows that $m(t) \in \Theta$ for every $t \in \R$. Since the quotient $f/h$ is not a constant function, it follows that $f \neq \theta h$ for every $\theta \in \Theta$. This proves the necessity part of the statement.

\textbf{Sufficiency.} It remains to show that if $\Theta$ is totally disconnected, then \eqref{eq:pw_line_impl} holds true. Therefore, let $f,h \in PW$ such that for every $t\in \R$ there exists $\theta_t\in \Theta$ with
    \begin{equation}\label{eq:assumption_lt_mathcalO}
        f(t) = \theta_t h(t).
    \end{equation}
    We have to show that there exists $\theta \in \Theta$ such that $f=\theta h$.

    If $f=0$ then also $h=0$ and we are done. If $f$ does not vanish identically, then there exists $t \in \R$ such that $f(t) \neq 0$. Since $f$ extends to an entire function, there exists an open disc $D \subseteq \C$ around $t$ such that $f$ does not vanish on $D$.

    As $f(t) \neq 0$, it follows from \eqref{eq:assumption_lt_mathcalO} that $h(t) \neq 0$. Hence, there exists a second disc $D'\subseteq \C$ around $t$ such that $h$ does not vanish on $D'$ (the disc $D'$ can be possibly smaller that $D$, since $h$ can have a zero in $D \setminus \R$).
    In particular, both $f,h$ do not vanish on the intersection
    $$
    D'' \coloneqq D \cap D',
    $$
    and the quotient $f/h$ is holomorphic in $D''$. 
    
    We will now show that $f/h$ must be constant on $D''\cap \R$. Suppose not. The continuity of $f/h$ on $D'' \cap \R$ (which is a proper interval, hence a connected set) implies that the image of $D'' \cap \R$ under $f/h$ must be connected and contain a proper arc. Since
    $$
    \left \{ \frac{f(t)}{h(t)} : t \in D'' \cap \R \right \} = \{ \theta_t : t \in D'' \cap \R \} \subseteq \Theta,
    $$
    it follows that $\Theta$ contains a connected set, a contradiction to the assumption on $\Theta$. 

    Thus $f/h$ is constant on $D''\cap \R$. It follows from \eqref{eq:assumption_lt_mathcalO}, that there exists $\theta \in \Theta$ such that
    $$
    \frac{f(t)}{h(t)} = \theta, \quad t \in D'' \cap \R.
    $$
    Hence, $f(t) = \theta h(t)$ for all $t \in D'' \cap \R$. Since $D'' \cap \R$ is a proper interval and $f,h$ are holomorphic on the entire complex plane, it follows from the uniqueness theorem of holomorphic functions that $f=\theta h$.
\end{proof}

\section{Möbius invariance and cross ratios}\label{sec:moebius}

The purpose of this section is to develop a systematic connection between classical topics in projective geometry - most notably Möbius transforms and cross ratios - and the theory of $\Theta$-PR. In particular, we show that Möbius transforms play a central role in analyzing $\Theta$-PR. Both Möbius transforms and cross ratios serve as essential tools in the proofs of Corollaries \ref{cor:4_pr_and_cr} and \ref{cor:pr_C2_characterization}, as well as Proposition \ref{prop:arc}.

\subsection{Möbius invariance}

Recall that for an invertible matrix $A = \begin{pmatrix}
    a & b \\ c & d
\end{pmatrix}$ with complex entries $a,b,c,d \in \C$, the Möbius transform corresponding to $A$ is defined by
$$
M_A(z) = \frac{az+b}{cz+d}.
$$
Let $\U(1,1)$ denote the group of all matrices $A \in \mathrm{GL}(2,\C)$ such that
$$
A^*JA = J, \quad J = \begin{pmatrix}
    1 & 0 \\ 0 & -1
\end{pmatrix}.
$$
If $A \in \U(1,1)$, then $M_A\in\mathrm{Aut}(\T)$, where $\mathrm{Aut}(\T)$ denotes the group of bijections on $\T$. Conversely, every Möbius transform $M\in \mathrm{Aut}(\T)$ comes from $A \in \U(1,1)$ up to multiplication of $A$ by a unimodular scalar \cite[Chapter 10.4]{simon2005orthogonal}.
For a given set $\Theta \subseteq \T$ and a Möbius transform $M\in \mathrm{Aut}(\T)$, we denote by $M(\Theta)$ the image of $\Theta$ under $M$, i.e.,
$
M(\Theta) = \{ M(\theta) : \theta \in \Theta \}.
$
We are ready to prove Theorem \ref{thm:mbs}.

\begin{proof}[Proof of Theorem \ref{thm:mbs}]
    The implications $(1) \implies (2)$ and $(3) \implies (1)$ follow from the fact that the identity map is a Möbius transform. Therefore, the three statements become equivalent once we have shown that $(2) \implies (3)$.
    
    We show that $(2) \implies (3)$ via contraposition: assume that there exists a Möbius transform $M\in \text{Aut}(\T)$ such that $\mathbf{G}$ fails $M(\Theta)$-PR. For a fixed $M'\in \text{Aut}(\T)$ we will show that $\mathbf{G}$ also fails $M'(\Theta)$-PR.
    
    From failure of $M(\Theta)$-PR and Lemma \ref{lma:failure_witnessing}, we can find linearly independent $f,h \in H$ such that for every $g\in \mathbf{G}$ there exists $\theta_g \in M(\Theta)$ with 
    $$
    \langle f,g\rangle = \theta_g \langle h,g\rangle.
    $$
    Let $M_0\in \text{Aut}(\T)$ be a Möbius transform of the form
    $$
    M_0(z) = \frac{az+b}{cz+d},
    $$
    with $ad-bc\neq 0$ ($M_0$ will be specified later). Define $x,y \in H$ via
    $$
    x = af+bh, \quad y = cf+dh.
    $$
    Since $f$ and $h$ are linearly independent and since $ad-bc\neq 0$ it follows that $x$ and $y$ are linearly independent. We will show that for every $g\in \mathbf{G}$ there exists $\theta_g\in (M_0 \circ M)(\Theta)$ such that 
    $$\langle x,g\rangle = \theta_g\langle y,g\rangle.$$
    Fix $g\in \mathbf{G}$. If $\langle h,g\rangle=0$, then also $\langle f,g\rangle=0$, hence $\langle x,g\rangle = 0=\theta_g\langle y,g\rangle$ for any $\theta_g \in (M_0 \circ M)(\Theta)$.
    Suppose $\langle f,g\rangle,\langle h,g\rangle\neq 0$. Then the quotient
    $$
    \frac{\langle f,g\rangle}{\langle h,g\rangle}
    $$
    is a well-defined complex number on the unit circle which satisfies 
    $$M_0\Big(\frac{\langle f,g\rangle}{\langle h,g\rangle}\Big) = \frac{a\frac{\langle f,g\rangle}{\langle h,g\rangle}+b}{c\frac{\langle f,g\rangle}{\langle h,g\rangle}+d}=\frac{a\langle f,g\rangle + b\langle h,g\rangle}{c\langle f,g\rangle + d\langle h,g\rangle} = \frac{\langle x,g\rangle}{\langle y,g\rangle}.$$
    Thus, we obtain
    $$\frac{\langle x,g\rangle}{\langle y,g\rangle} = M_0(\theta_g) \in (M_0 \circ M)(\Theta).$$ 
    Since $x,y$ are linearly independent, it follows that $\mathbf{G}$ fails $(M_0 \circ M)(\Theta)$-PR. Since $M_0$ was arbitrary, picking $M_0:=M'\circ M^{-1}$ implies that $\mathbf{G}$ fails $M'(\Theta)$-PR.
\end{proof}

\subsection{Relation to cross ratios}

Our next goal is to establish a relation between $\Theta$-PR and projective invariants. To do so, we first recall some basic facts about cross ratios. For pairwise distinct $z_1,z_2,z_3,z_4 \in \C$, the cross ratio is defined by
$$
\cratio(z_1,z_2;z_3,z_4) = \frac{(z_1-z_3)(z_2-z_4)}{(z_1-z_4)(z_2-z_3)}.
$$
It is known that the cross ratio is invariant under Möbius transforms, in the sense that for every Möbius transform $M$ and every pairwise distinct $z_1,z_2,z_3,z_4 \in \C$ we have
$$
\cratio(M(z_1),M(z_2);M(z_3),M(z_4)) = \cratio(z_1,z_2;z_3,z_4).
$$
Finally, the cross ratio $\cratio(z_1,z_2;z_3,z_4)$ is real if and only if $z_1,z_2,z_3,z_4 \in \C$ are four distinct points on the same circle or line. The cross ratio can be used to give a criterion under which two quadruples are related by a Möbius transform: if $\{ \theta_1,\theta_2,\theta_3,\theta_4\} \subseteq \T$ and $\{ \theta_1',\theta_2',\theta_3',\theta_4'\} \subseteq \T$ are two quadruples then $$\cratio(\theta_1,\theta_2;\theta_3,\theta_4) = \cratio(\theta_1',\theta_2';\theta_3',\theta_4')$$ if and only if there exists a Möbius transform $M\in \text{Aut}(\T)$ such that $M(\theta_j)=\theta_j'$ for every $j \in \{ 1,2,3,4 \}$ (see for example \cite{hohl2025laplace} for an explicit proof). We are ready to prove the following statement.

\begin{corollary}\label{cor:4_pr_and_cr}
    Let $H$ be a Hilbert space, let $\mathbf{G}\subseteq H$ and let $\Theta,\Theta'\subseteq \T$ satisfy $|\Theta|=|\Theta'|=4$. If $\cratio(\Theta)=\cratio(\Theta')$, then $\mathbf{G}$ does $\Theta$-PR if and only if $\mathbf{G}$ does $\Theta'$-PR. 
\end{corollary}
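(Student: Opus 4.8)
The plan is to reduce the statement directly to the Möbius invariance established in Theorem \ref{thm:mbs}, using the cross-ratio criterion recalled just above for when two quadruples on $\T$ are Möbius-equivalent. The essential point is that equality of cross ratios is precisely the condition producing a circle automorphism carrying $\Theta$ onto $\Theta'$, after which $\Theta$-PR and $\Theta'$-PR coincide by the invariance already proved.

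Concretely, first I would fix enumerations $\Theta = \{\theta_1,\theta_2,\theta_3,\theta_4\}$ and $\Theta' = \{\theta_1',\theta_2',\theta_3',\theta_4'\}$ for which the hypothesis $\cratio(\Theta) = \cratio(\Theta')$ reads
$$
\cratio(\theta_1,\theta_2;\theta_3,\theta_4) = \cratio(\theta_1',\theta_2';\theta_3',\theta_4').
$$
By the cross-ratio criterion recalled above, this equality guarantees the existence of a Möbius transform $M \in \mathrm{Aut}(\T)$ with $M(\theta_j) = \theta_j'$ for every $j \in \{1,2,3,4\}$. Since $M$ is a bijection of $\T$ and the $\theta_j$ exhaust $\Theta$, it follows that $M(\Theta) = \Theta'$.

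Then I would simply invoke the equivalence of statements $(1)$ and $(3)$ in Theorem \ref{thm:mbs}: the system $\mathbf{G}$ does $\Theta$-PR if and only if it does $M(\Theta)$-PR for every circle automorphism $M$. Applying this to the specific $M$ constructed above, and using $M(\Theta) = \Theta'$, we conclude that $\mathbf{G}$ does $\Theta$-PR if and only if it does $\Theta'$-PR, which is the claim.

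The only point requiring care is the interpretation of the symbol $\cratio(\Theta)$ for an unordered four-element set: the cross ratio depends on the ordering of the four points, so one has to make explicit that $\cratio(\Theta) = \cratio(\Theta')$ means the two sets admit orderings with equal cross ratio, which is exactly the hypothesis under which the Möbius-equivalence criterion applies. Beyond fixing this convention, I expect no genuine obstacle: the entire content is packaged in Theorem \ref{thm:mbs} together with the standard projective fact that the cross ratio is a complete invariant for ordered quadruples on $\T$ modulo $\mathrm{Aut}(\T)$.
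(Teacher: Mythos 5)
Your proposal is correct and follows exactly the paper's proof: equality of cross ratios yields a Möbius transform $M \in \mathrm{Aut}(\T)$ with $M(\Theta) = \Theta'$ (via the criterion recalled before the corollary), and Theorem \ref{thm:mbs} then gives the equivalence of $\Theta$-PR and $\Theta'$-PR. Your remark on the ordering convention behind $\cratio(\Theta)$ is a sensible clarification of the paper's implicit usage but does not alter the argument.
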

\begin{proof}
    Suppose $\Theta,\Theta'\subseteq \T$ satisfy $\cratio(\Theta)=\cratio(\Theta')$. This implies that there exists a Möbius transform $M\in\text{Aut}(\T)$ such that $M(\Theta)=\Theta'$. From Theorem \ref{thm:mbs} we get that $\mathbf{G}$ does $\Theta$-PR if and only if $\mathbf{G}$ does $\Theta'$-PR.
\end{proof}

\begin{example}
    The reverse implication in the latter corollary is in general not true. Indeed, take any system $\mathbf{G}$ doing PR. From Lemma \ref{lemma:theta_pr_implies_theta'_pr}, such $\mathbf{G}$ does $\Theta$-PR for any $\Theta\subseteq \T$. In particular, we can pick any $\Theta,\Theta'\subseteq \T$ satisfying $|\Theta|=|\Theta'|=4$ and $\cratio(\Theta)\neq\cratio(\Theta')$.
\end{example}

\section{$\Theta$-PR in $\Cd$}

In this section we study $\Theta$-PR in the finite dimensional Hilbert space $H=\C^d$. Note that if $d=1$ then for any $\Theta\subseteq \T$, $\mathbf{G}$ does $\Theta$-PR if and only if $\mathbf{G}\neq \{0\}$. We therefore restrict our attention to the case $d\geq 2$.

\subsection{$\Theta$-PR in $\C^2$}\label{sec:c2_characterization}

The first nontrivial setting in which $\Theta$-PR can be analyzed is the space $\C^2$. In this case, the so-called $4d - 4$ conjecture is known to hold: there exist systems of four vectors that do phase retrieval in $\C^2$, while no system with fewer than four vectors does \cite[Theorem 10]{bandeira2014saving}. Moreover, the collection of complete systems consisting of four-element doing phase retrieval in $\C^2$ has been explicitly characterized in \cite{grohs2025multi}. It is shown there that, up to an appropriate linear change of coordinates, the problem reduces to studying systems of the form
\begin{equation}
        \mathbf{G}(a,b,c) =\left \{  
             \begin{pmatrix}
               1 \\
               0 
             \end{pmatrix},    
             \begin{pmatrix}
               a \\
               1 
               \end{pmatrix},
               \begin{pmatrix}
               b \\
               1 
               \end{pmatrix},\begin{pmatrix}
               c \\
               1 
               \end{pmatrix}
               \right \},
\end{equation}
where $a,b,c \in \C$. The failure of phase retrieval for such a system admits a geometric characterization \cite[Theorem 1.4]{grohs2025multi}: $\mathbf{G}(a,b,c)$ fails phase retrieval in $\C^2$ if and only if $a,b,c$ are collinear. To the best of our knowledge, $\C^2$ remains the only Hilbert space for which such a concrete and complete characterization of phase retrieval is available. In the present subsection, we study the systems $\mathbf{G}(a,b,c)$ in the context of $\Theta$-PR; we note that the same technique used to reduce the study of PR of $\mathbf{G}\subseteq \C^2$ to systems of the form $\mathbf{G}(a,b,c)$ also works for $\Theta$-PR due to Lemma \ref{lemma:invariantTransform}. We also point out that it is not needed to characterize $\Theta$-PR for every $\Theta\subseteq\T$. Indeed, straight from the definition of PR and $\Theta$-PR we obtain the following.

\begin{lemma}\label{lma:m_PR_is_PR}
    Let $\mathbf{G}\subseteq \C^d$ satisfy $|\mathbf{G}|=m$. Then $\mathbf{G}$ does PR if and only if for every $\Theta\subseteq \T$, with $|\Theta|\leq m$, $\mathbf{G}$ does $\Theta$-PR.
\end{lemma}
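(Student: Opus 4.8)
The plan is to prove the two implications separately; the forward direction is immediate from monotonicity in $\Theta$, while the reverse direction rests on a simple counting argument for the rotation factors.

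For the forward implication, suppose $\mathbf{G}$ does PR, which is $\Theta$-PR with $\Theta = \T$. Since any $\Theta \subseteq \T$ satisfies $\Theta \subseteq \T$, Lemma \ref{lemma:theta_pr_implies_theta'_pr} immediately gives that $\mathbf{G}$ does $\Theta$-PR. In particular this holds for every $\Theta$ with $|\Theta| \leq m$, so no cardinality restriction is needed in this direction.

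For the reverse implication, I would first note that applying the hypothesis to a singleton $\Theta$ (permissible since $1 \leq m$) and invoking Lemma \ref{lma:completeness} shows that $\mathbf{G}$ is complete. Now fix $f,h \in \C^d$ witnessing the PR hypothesis, that is $|\langle f,g\rangle| = |\langle h,g\rangle|$ for every $g \in \mathbf{G}$. The key observation is that this magnitude condition is equivalent to the existence, for each $g$, of a unimodular factor $\theta_g \in \T$ with $\langle f,g\rangle = \theta_g \langle h,g\rangle$: when $\langle h,g\rangle \neq 0$ the factor $\theta_g = \langle f,g\rangle / \langle h,g\rangle$ is uniquely determined and lies on $\T$, while when $\langle h,g\rangle = 0$ the magnitude condition forces $\langle f,g\rangle = 0$ and $\theta_g$ may be chosen freely. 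The decisive point is that $\mathbf{G}$ has only $m$ elements, so the collection $\Theta_0 \coloneqq \{ \theta_g : g \in \mathbf{G} \}$ satisfies $|\Theta_0| \leq m$. For the $g$ with $\langle h,g\rangle = 0$ I would simply reuse a factor already present, keeping $\Theta_0$ nonempty and of size at most $m$; in the degenerate case where $h$ is orthogonal to all of $\mathbf{G}$, completeness forces $h = 0$ and hence $f = 0$, so the conclusion $f = \theta h$ holds trivially. By hypothesis $\mathbf{G}$ does $\Theta_0$-PR, and the relations $\langle f,g\rangle = \theta_g \langle h,g\rangle$ with $\theta_g \in \Theta_0$ then yield $f = \theta h$ for some $\theta \in \Theta_0 \subseteq \T$, which is exactly the conclusion of PR.

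I do not expect a genuine obstacle here: the entire content is the bound $|\Theta_0| \leq m$, which is forced by $|\mathbf{G}| = m$. The only care required is the bookkeeping around vanishing inner products, ensuring that $\Theta_0$ is a nonempty subset of $\T$ of cardinality at most $m$, and this is handled by the completeness of $\mathbf{G}$.
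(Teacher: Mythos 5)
Your proof is correct and is essentially the argument the paper has in mind: the paper states this lemma as following ``straight from the definition,'' and the intended content is exactly your counting observation that the $m$ vectors of $\mathbf{G}$ can contribute at most $m$ distinct phase factors, so any witness of failure of PR is already a witness of failure of $\Theta_0$-PR for some $\Theta_0$ with $|\Theta_0|\leq m$, while the forward direction is monotonicity via Lemma~\ref{lemma:theta_pr_implies_theta'_pr}. Your detour through completeness for the degenerate case $h\in\mathbf{G}^\perp$ is harmless but unnecessary, since assigning any fixed $\theta_g\in\T$ to the vectors with $\langle h,g\rangle=0$ still yields $|\Theta_0|\leq m$ (one factor per vector), and the $\Theta_0$-PR hypothesis then applies directly.
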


The latter lemma essentially says that in order to understand PR for a system $\mathbf{G}$ it suffices to study $\Theta$-PR for $|\Theta|\leq |\mathbf{G}|$. Since in this section we restrict our attention to $\C^2$ and systems $\mathbf{G}(a,b,c)$, it suffices to analyze whenever $\mathbf{G}(a,b,c)$ does $\Theta$-PR for $|\Theta|\leq 4$.

\begin{proposition}\label{prop:C2framesCharacterization}
    Let $a,b,c \in \C$. Then the following holds:
    \begin{enumerate}
        \item $\mathbf{G}(a,b,c)$ fails $2$-PR if and only if $a=b=c$.
        \item $\mathbf{G}(a,b,c)$ fails $3$-PR if and only if $a=b$ or $b=c$, or $a=c$.
        \item $\mathbf{G}(a,b,c)$ fails $\Theta$-PR with $|\Theta|=4$ if and only if $a=b$ or $b=c$, or $a=c$, or there exists an order $(\theta_1, \theta_2,\theta_3, \theta_4)$ of $\Theta$ such that
        $$\frac{c-a}{b-a} = \overline{\mathrm{CR}(\theta_1,\theta_2;\theta_3,\theta_4)}.
        $$
    \end{enumerate}
\end{proposition}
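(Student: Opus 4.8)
The plan is to reduce failure of $\Theta$-PR for $\mathbf{G}(a,b,c)$ to the existence of a Möbius transform carrying four explicit points into $\Theta$, and then to read off each of the three cases. Throughout write $g_0=\begin{pmatrix}1\\0\end{pmatrix}$, $g_1=\begin{pmatrix}a\\1\end{pmatrix}$, $g_2=\begin{pmatrix}b\\1\end{pmatrix}$, $g_3=\begin{pmatrix}c\\1\end{pmatrix}$, and recall that a subset of $\C^2$ is incomplete iff it lies on a single line through the origin.

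For the first two parts I would appeal to the earlier characterizations. For (1), by Proposition \ref{prop:cp} failure of $2$-PR is failure of the complement property; since $g_0$ is the unique vector on the line $\C g_0$ while each $g_i$ ($i\ge 1$) spans its own line, the only partition of $\mathbf{G}(a,b,c)$ into two incomplete pieces is $\{g_0\}$ against $\{g_1,g_2,g_3\}$, and the latter is incomplete exactly when $g_1,g_2,g_3$ lie on one line, i.e. $a=b=c$. For (2) I would invoke Corollary \ref{cor:3_pr_characterization}, observing that in $\C^2$ its condition (3) is automatic because any two independent vectors span. Thus failure of $3$-PR means $\mathbf{G}(a,b,c)$ admits a cover by three sets each contained in a line through the origin; since the four vectors lie on four distinct such lines precisely when $a,b,c$ are pairwise distinct, a pigeonhole argument forces two of the four vectors onto a common line, so a valid cover exists iff two of $a,b,c$ coincide.

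The substance is part (3), where I would set up the Möbius correspondence directly. By Lemma \ref{lma:failure_witnessing}, $\mathbf{G}(a,b,c)$ fails $\Theta$-PR iff there are linearly independent $f,h\in\C^2$ with $\langle f,g\rangle/\langle h,g\rangle\in\Theta$ for every $g\in\mathbf{G}(a,b,c)$ (the case $\langle h,g\rangle=0$ is excluded, since $f,h$ span $\C^2$). Writing $\zeta=\overline{g_1/g_2}$ for the conjugate affine coordinate of $g=(g_1,g_2)$ and using conjugate-linearity in the second slot,
$$
\frac{\langle f,g\rangle}{\langle h,g\rangle}=\frac{f_1\zeta+f_2}{h_1\zeta+h_2}=M(\zeta),
$$
where $M$ has matrix $\begin{pmatrix}f_1 & f_2\\ h_1 & h_2\end{pmatrix}$, invertible exactly when $f,h$ are independent. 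As $(f,h)$ ranges over independent pairs, $M$ ranges over all non-degenerate Möbius transforms, and the four vectors correspond to the source points $\infty,\bar a,\bar b,\bar c$. Hence $\mathbf{G}(a,b,c)$ fails $\Theta$-PR iff some Möbius transform sends $\{\infty,\bar a,\bar b,\bar c\}$ into $\Theta$.

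It then remains to split on coincidences. If two of $a,b,c$ agree, the source points collapse to at most three distinct values, and a Möbius transform can send three distinct points to any three distinct targets in $\Theta$, so $\Theta$-PR fails. If $a,b,c$ are pairwise distinct the four source points are distinct, so their images fill the four-element set $\Theta$ only if $M$ maps $\{\infty,\bar a,\bar b,\bar c\}$ bijectively onto $\Theta$; by the cross-ratio criterion such an $M$ exists iff some ordering $(\theta_1,\theta_2,\theta_3,\theta_4)$ of $\Theta$ satisfies $\mathrm{CR}(\infty,\bar a;\bar b,\bar c)=\mathrm{CR}(\theta_1,\theta_2;\theta_3,\theta_4)$. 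The single computation needed is
$$
\mathrm{CR}(\infty,\bar a;\bar b,\bar c)=\frac{\bar a-\bar c}{\bar a-\bar b}=\overline{\frac{c-a}{b-a}},
$$
and conjugating both sides yields the stated relation $\frac{c-a}{b-a}=\overline{\mathrm{CR}(\theta_1,\theta_2;\theta_3,\theta_4)}$. I expect the main obstacle to be the conjugation bookkeeping: the conjugate-linearity of the inner product is exactly what produces the bar on the cross ratio, and it is also what makes the criterion consistent, since for $\theta_j\in\T$ the cross ratio is real and the condition forces $\frac{c-a}{b-a}$ to be real, recovering the collinearity that characterizes failure of ordinary PR.
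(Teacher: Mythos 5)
Your proposal is correct, and for part (3) it takes a genuinely different route from the paper. Parts (1) and (2) follow the paper essentially verbatim: Proposition \ref{prop:cp} for the two-phase case, and Corollary \ref{cor:3_pr_characterization} plus a pigeonhole on lines through the origin for the three-phase case (the paper's ``quick elimination of potential cases'' is the same argument; your observation that condition (3) of the corollary is vacuous in $\C^2$ is what the paper uses implicitly). For part (3), the paper first normalizes by the shear $T$ to $\tilde{\mathbf{G}}(p,q)$, then solves the resulting linear system in $(h_1,h_2)$ by hand to extract the conjugated cross ratio, and proves sufficiency by exhibiting explicit witness vectors $f,h$. You instead encode an independent witness pair $(f,h)$ from Lemma \ref{lma:failure_witnessing} as a Möbius transform $M(z)=(f_1z+f_2)/(h_1z+h_2)$ of the Riemann sphere, under which the four measurement ratios are the values of $M$ at the source points $\infty,\bar a,\bar b,\bar c$; failure of $\Theta$-PR becomes the existence of a Möbius transform carrying these sources into $\Theta$, and the trichotomy drops out of sharp three-transitivity (the degenerate cases) together with injectivity of $M$: four distinct sources must map \emph{onto} the four-element set $\Theta$, so the classical four-point criterion yields exactly the cross-ratio condition, with the conjugation produced by conjugate-linearity in the second slot, as you correctly track. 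This is the same mechanism that drives the paper's proof of Theorem \ref{thm:mbs}, but used in the opposite direction: rather than moving $\Theta$ by an automorphism of $\T$, you let arbitrary Möbius transforms parametrize the witnesses, which explains structurally why the cross ratio appears and replaces both the paper's computation and its explicit sufficiency construction; it would also re-derive parts (1) and (2) in one stroke, since a $k$-element $\Theta$ can absorb the injective image of the distinct sources precisely when there are at most $k$ of them. Two small points to write out in a full version: the converse of your correspondence (given $M$ with the four values in $\Theta\subseteq\T$, the relevant denominators are automatically nonzero, so $(f,h)$ is a genuine witness), and, in part (2), that one may relabel the cover so that the two sets with non-parallel orthogonal complements occupy the first two slots of Corollary \ref{cor:3_pr_characterization} — possible because not all three sets can lie on a single line, the union being complete — so that condition (2) of the corollary, not only condition (3), is seen to be harmless in $\C^2$.
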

\begin{proof}
    (1). According to Proposition \ref{prop:cp} it suffices to show that $\mathbf{G}$ fails the complement property if and only if $a=b=c$.

    Suppose $\mathbf{G}$ fails the complement property. We can find $\mathbf{S}\subseteq \mathbf{G}$ such that $\mathbf{S}$ nor $\mathbf{G\setminus S}$ are complete. A quick elimination of potential cases shows that the only option when this is possible is when $\mathbf{S} = \{(1,0)\}$ and $\mathbf{G\setminus S} = \{(a,1),(b,1),(c,1)\}$. As $\mathbf{G\setminus S}$ is not complete, this forces $a=b=c$.

    If $a=b=c$ we get failure of the complement property for $\mathbf{S} = \{(1,0)\}$.
    
    (2). We will use the characterization from Corollary \ref{cor:3_pr_characterization}. Suppose $a=b$. Consider $\mathbf{G_1} = \{(1,0)\}$, $\mathbf{G_2} = \{(a,1),(b,1)\}=\{(a,1)\}$ and $\mathbf{G_3} = \{(c,1)\}$. Then by taking $x_1 = (0,1)$, $x_2 = (1, -\overline{a})$ and $x_3 = (1, -\overline{c})$ we get that $x_j\in \mathbf{G}_j^\perp$ for $j=1,2,3$. Moreover, since $x_j \in \C^2$, the system $x_3 = Ax_1+Bx_2$ has a non-zero solution $(A,B)$. The cases when $a=c$ or $b=c$ are proved similarly.
    
    Now, suppose there exists a cover $\mathbf{G}_1,\mathbf{G}_2,\mathbf{G}_3$ or $\mathbf{G}$ and non-zero $x_j\in\mathbf{G}_j^\perp$ with $x_1,x_2$ linearly independent and $x_3\in \lspan\{x_1,x_2\}$. In particular this means that $\mathbf{G}_j$ for $j=1,2,3$ is not complete. Since the sets $\{(1,0),(a,1)\}$, $\{(1,0),(b,1)\}$ and $\{(1,0),(c,1)\}$ are always complete, the only option for the case where all three subsets are incomplete forces $\mathbf{G}_1 = \{(1,0)\}$. If say $\mathbf{G}_2 = \{(a,1),(b,1)\}$, then incompleteness of $\mathbf{G}_2$ gives $a=b$. The other cases give $a=b$ or $a=c$.

    (3). Fix $\Theta$ with $|\Theta|=4$. We first reduce this problem further. Consider the invertible matrix $T \in \C^{2 \times 2}$ and constants $p,q \in \C$ defined by
    $$
    T = \begin{pmatrix}
        1 & -a \\
        0 & 1
    \end{pmatrix}, \quad p = b-a, \quad q = c-a.
    $$
    The image of $\mathbf{G}(a,b,c)$ under $T$ satisfies
    $$
    T(\mathbf{G}(a,b,c)) = \tilde{\mathbf{G}}(p,q) \coloneqq \left \{  
             \begin{pmatrix}
               1 \\
               0 
             \end{pmatrix},    
             \begin{pmatrix}
               0 \\
               1 
               \end{pmatrix},
               \begin{pmatrix}
               p \\
               1 
               \end{pmatrix},\begin{pmatrix}
               q \\
               1 
               \end{pmatrix}
               \right \}=\{g_1,g_2,g_3,g_4\}.
    $$
    According to Lemma \ref{lemma:invariantTransform}, it's enough to show that $\tilde{\mathbf{G}}(p,q)$ fails $\Theta$-PR if and only if $p=0$ or $q=0$ or $p=q$ or there exists an order $(\theta_1,\theta_2,\theta_3,\theta_4)$ of $\Theta$ such that  $\overline{\mathrm{CR}(\theta_1,\theta_2;\theta_3,\theta_4)}=\frac{q}{p}$.
    
    For the forward direction, suppose $\mathbf{G}$ fails $\Theta$-PR. If additionally $\tilde{\mathbf{G}}(p,q)$ fails $3$-PR, then $p=0$ or $q=0$, or $p=q$ from part (b). Suppose $\tilde{\mathbf{G}}(p,q)$ does $3$-PR. This means we find $f,h\in H$ and an order $(\theta_1,\theta_2,\theta_3,\theta_4)$ of $\Theta$ such that $f\neq \theta h$ for $\theta \in \Theta$ and yet $\langle f,g_j\rangle = \theta_j\langle h,g_j\rangle$ for $j=1,2,3,4$. Using the definition of $\tilde{\mathbf{G}}(p,q)$ we get
    \begin{align}
        f_1 &= \theta_1 h_1\\
        f_2 &=\theta_2 h_2\\
        \overline{p}f_1 + f_2 &= \theta_3(\overline{p}h_1+h_2)\\
        \overline{q}f_1 + f_2 &= \theta_4(\overline{q}h_1+h_2).\\
    \end{align} 
    Reorganizing it gives 
        \begin{align}
        \overline{p}h_1(\theta_1-\theta_3) + h_2(\theta_2-
        \theta_3)&= 0\\
        \overline{q}h_1(\theta_1-\theta_4) + h_2(\theta_2-
        \theta_4)&= 0.\\
    \end{align}
    Note, if $h_1=0$ and $h_2\neq 0$ we get $f_1=0$ and since $f_2=\theta_2h_2$ this gives $f=\theta_2h$ contradicting failure of $\Theta$-PR. Hence $h_1\neq 0$. Similar argument shows that $h_2\neq 0$. Solving the system of equations gives 
            $$\frac{q}{p} = \overline{\frac{(\theta_1-\theta_3)(\theta_2-\theta_4)}{(\theta_2-\theta_3)(\theta_1-\theta_4)}} = \overline{\cratio(\theta_1,\theta_2;\theta_3,\theta_4)}.$$ 

    For the reverse implication, if $p=0$ or $q=0$ or $p=q$ then $\tilde{\mathbf{G}}(p,q)$ fails $3$-PR from part (b), hence $\Theta$-PR from Lemma \ref{lemma:theta_pr_implies_theta'_pr}. Assume that $p,q\neq 0$. Suppose there exists an order $(\theta_1,\theta_2,\theta_3,\theta_4)$ of $\Theta$ such that $\frac{q}{p}=\overline{\cratio(\theta_1,\theta_2;\theta_3,\theta_4)}$. Take
          \begin{align*}
            f=\begin{pmatrix}
            \frac{\theta_1}{\overline{p}}\\
            -\theta_2\frac{\theta_1-\theta_3}{\theta_2-\theta_3}
            \end{pmatrix}, \quad
            h=\begin{pmatrix}
            \frac{1}{\overline{p}}\\
            -\frac{\theta_1-\theta_3}{\theta_2-\theta_3}
            \end{pmatrix}.
            \end{align*}
    Since elements of $\Theta$ are pairwise distinct, we have $f\neq \theta h$ for any $\theta \in \Theta$. Next, we immediately get that $\langle f,g_j\rangle = \theta_j\langle h,g_j\rangle$ for $j=1,2$. To show that $\langle f,g_3\rangle=\theta_3\langle h,g_3\rangle$ we first note that a direct calculation shows 
    $$\theta_1-\theta_2\frac{\theta_1-\theta_3}{\theta_2-\theta_3} = \theta_3\Big(1-\frac{\theta_1-\theta_3}{\theta_2-\theta_3}\Big).$$
    This gives
    \begin{align*}
        \langle f,g_3\rangle = \theta_1-\theta_2\frac{\theta_1-\theta_3}{\theta_2-\theta_3} = \theta_3\Big(1-\frac{\theta_1-\theta_3}{\theta_2-\theta_3}\Big) = \theta_3\langle h,g_3\rangle. 
    \end{align*}
    Finally, to show that $\langle f,g_4\rangle = \theta_4\langle h,g_4\rangle$ we observe that 
    $$\theta_2 - \theta_1\frac{\theta_2-\theta_4}{\theta_1-\theta_4} = \theta_4\Big(1-\frac{\theta_2-\theta_4}{\theta_1-\theta_4}\Big).$$
    Using the assumption that $\frac{\overline{q}}{\overline{p}}=\cratio(\Theta)$ we get
    \begin{align*}
        \langle f,g_4\rangle &= \theta_1 \frac{\overline{q}}{\overline{p}} - \theta_2 \frac{\theta_1-\theta_3}{\theta_2-\theta_3} = \theta_1 \frac{(\theta_1-\theta_3)(\theta_2-\theta_4)}{(\theta_2-\theta_3)(\theta_1-\theta_4)} - \theta_2\frac{\theta_1-\theta_3}{\theta_2-\theta_3}\\
        &=-\frac{\theta_1-\theta_3}{\theta_2-\theta_3}\Big(\theta_2 - \theta_1\frac{\theta_2-\theta_4}{\theta_1-\theta_4}\Big) = -\frac{\theta_1-\theta_3}{\theta_2-\theta_3}\theta_4\Big(1-\frac{\theta_2-\theta_4}{\theta_1-\theta_4}\Big)\\
        &=\theta_4 \Big(\frac{\overline{q}}{\overline{p}} - \frac{\theta_1-\theta_3}{\theta_2-\theta_3}\Big) = \theta_4\langle h,g_4\rangle.
    \end{align*}
\end{proof}

Using characterization from Proposition \ref{prop:C2framesCharacterization} and the observation that cross ratio of elements from the unit circle is a real number, we obtain a different method of characterizing PR in $\C^2$, equivalent to the one obtained in \cite[Theorem 1.4]{grohs2025multi}.

\begin{corollary}\label{cor:pr_C2_characterization}
    Let $\mathbf{G}=\{(1,0), (a,1), (b,1), (c,1)\}\subseteq \C^2$. Then $\mathbf{G}$ fails PR if and only if $b=a$ or $\frac{c-a}{b-a}\in \R$.
\end{corollary}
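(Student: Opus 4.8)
The plan is to convert the classical phase retrieval question into the $\Theta$-PR framework of Proposition \ref{prop:C2framesCharacterization} and then read off the condition via cross ratios. First I would invoke Lemma \ref{lma:m_PR_is_PR}: since $|\mathbf{G}|=4$, the system $\mathbf{G}$ does PR if and only if it does $\Theta$-PR for every $\Theta\subseteq\T$ with $|\Theta|\leq 4$. Hence $\mathbf{G}$ fails PR precisely when it fails $\Theta$-PR for some $\Theta$ with $|\Theta|\leq 4$. I would then upgrade any such failure to one at cardinality exactly $4$: if $\mathbf{G}$ fails $\Theta'$-PR for some $\Theta'$ with $|\Theta'|\leq 4$, choose any four-element set $\Theta\supseteq\Theta'$; by the contrapositive of Lemma \ref{lemma:theta_pr_implies_theta'_pr}, $\mathbf{G}$ fails $\Theta$-PR. (The case $|\Theta'|=1$ never occurs, since $\mathbf{G}$ contains the linearly independent pair $(1,0),(a,1)$ and is therefore complete, so it cannot fail $1$-PR.) This reduces the statement to the assertion that $\mathbf{G}$ fails PR if and only if it fails $\Theta$-PR for some $\Theta$ with $|\Theta|=4$.

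Next I would feed this into Proposition \ref{prop:C2framesCharacterization}(3), according to which failure of $\Theta$-PR for a fixed four-element $\Theta$ is equivalent to $a=b$, $b=c$, $a=c$, or the existence of an ordering $(\theta_1,\theta_2,\theta_3,\theta_4)$ of $\Theta$ with $\tfrac{c-a}{b-a}=\overline{\cratio(\theta_1,\theta_2;\theta_3,\theta_4)}$. The degenerate cases translate directly into the claimed condition: $a=b$ is the alternative $b=a$, while $a=c$ gives $\tfrac{c-a}{b-a}=0\in\R$ and $b=c$ gives $\tfrac{c-a}{b-a}=1\in\R$. For the cross-ratio alternative I would use the fact recalled in Section \ref{sec:moebius} that the cross ratio of four distinct points on $\T$ is real; hence the conjugate is redundant and the condition forces $\tfrac{c-a}{b-a}\in\R$. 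Combining these observations shows that failure of PR implies $b=a$ or $\tfrac{c-a}{b-a}\in\R$.

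For the converse I must produce, for every admissible real value, a four-element $\Theta\subseteq\T$ realizing it as a cross ratio. If $b=a$, or if $\tfrac{c-a}{b-a}\in\{0,1\}$ (that is, $a=c$ or $b=c$), failure follows from the degenerate cases of Proposition \ref{prop:C2framesCharacterization} together with Lemma \ref{lemma:theta_pr_implies_theta'_pr}. Otherwise $a,b,c$ are pairwise distinct and $\lambda\coloneqq\tfrac{c-a}{b-a}\in\R\setminus\{0,1\}$, so it remains to exhibit distinct $\theta_1,\theta_2,\theta_3,\theta_4\in\T$ with $\cratio(\theta_1,\theta_2;\theta_3,\theta_4)=\lambda$ (the conjugate again being harmless since $\lambda$ is real). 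This realizability step is the main obstacle. I would handle it by fixing any three distinct points $\theta_1,\theta_2,\theta_3\in\T$ and viewing $w\mapsto \cratio(\theta_1,\theta_2;\theta_3,w)$ as a Möbius transform in $w$; it sends $\theta_1\mapsto\infty$, $\theta_2\mapsto 0$, and $\theta_3\mapsto 1$, so it maps the circle $\T$ onto the line $\R\cup\{\infty\}$ and restricts to a bijection from $\T\setminus\{\theta_1,\theta_2,\theta_3\}$ onto $\R\setminus\{0,1\}$. Choosing $\theta_4$ to be the preimage of $\lambda$ then yields four distinct points on $\T$ with cross ratio $\lambda$, and Proposition \ref{prop:C2framesCharacterization}(3) gives failure of $\Theta$-PR for $\Theta=\{\theta_1,\theta_2,\theta_3,\theta_4\}$, hence of PR.
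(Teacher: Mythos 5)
Your proof is correct, and its skeleton coincides with the paper's: necessity by feeding a four-element phase set into Proposition \ref{prop:C2framesCharacterization}(3) and using the fact that a cross ratio of four concircular points is real, sufficiency by realizing the prescribed real number as a cross ratio of four points of $\T$ and invoking Proposition \ref{prop:C2framesCharacterization}(3) together with Lemma \ref{lemma:theta_pr_implies_theta'_pr}. Where you differ is in the execution of two sub-steps, and in both you are actually more careful than the paper. First, the paper simply extracts a four-element witness $\Theta=\{\theta_1,\theta_2,\theta_3,\theta_4\}$ ``from the definition of PR,'' glossing over the fact that the phases in a failure witness need not be distinct; your reduction via Lemma \ref{lma:m_PR_is_PR} plus the contrapositive of Lemma \ref{lemma:theta_pr_implies_theta'_pr} (enlarging $\Theta'$ to cardinality exactly $4$, with the singleton case ruled out by completeness of $\mathbf{G}$) closes this gap cleanly. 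Second, for realizability the paper computes an explicit one-parameter family $\theta_1=1$, $\theta_2=e^{i\alpha}$, $\theta_3=e^{i(\alpha+\pi)}$, $\theta_4=e^{i(\alpha+\pi/2)}$, whose cross ratio works out to $1+\tfrac{\sin\alpha}{1+\cos\alpha}$ and is onto $\R$; your argument instead uses the standard projective fact that $w\mapsto\cratio(\theta_1,\theta_2;\theta_3,w)$ is a Möbius map sending $\theta_1,\theta_2,\theta_3$ to $\infty,0,1$ and hence bijecting $\T\setminus\{\theta_1,\theta_2,\theta_3\}$ onto $\R\setminus\{0,1\}$. Your abstract route avoids computation and, notably, handles distinctness properly: the paper's family degenerates exactly at the parameter values producing $r\in\{0,1\}$ (e.g.\ $r=1$ forces $\alpha=0$, where $\theta_1=\theta_2$), so strictly speaking the paper's sufficiency argument needs the cases $a=c$ and $b=c$ to be split off via Proposition \ref{prop:C2framesCharacterization}(2), which is precisely what you do explicitly. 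In short: same decomposition and same key proposition, but your version of the realizability step is more conceptual and repairs two small glosses in the paper's own write-up, at the cost of invoking (standard) facts about Möbius transforms rather than a self-contained trigonometric computation.
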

\begin{proof}
    \textbf{Necessity}. Suppose $\mathbf{G}$ fails PR. From the definition of PR we can find $f,h\in \C^2$ and $\Theta=\{\theta_1,\theta_2,\theta_3,\theta_4\}$ witnessing this failure. From Proposition \ref{prop:C2framesCharacterization} part (c) this forces that $a=b$ or $a=c$, or $b=c$ (the latter two giving $\frac{c-a}{b-a}\in \R$), or         
    $\frac{c-a}{b-a} = \overline{\cratio(\Theta)}$. Since $\cratio(\Theta)$ is always a real number, we get $\frac{c-a}{b-a}\in \R$.

    \textbf{Sufficiency}. If $a=b$ $\mathbf{G}$ fails $3$-PR from Proposition \ref{prop:C2framesCharacterization} part (b), thus PR from Lemma \ref{lemma:theta_pr_implies_theta'_pr}. Suppose $a\neq b$ and $\frac{c-a}{b-a}\in \R$. We will show that one can find $\Theta=\{\theta_1,\theta_2,\theta_3,\theta_4\}\subseteq \T$ such that $\frac{c-a}{b-a} = \overline{\cratio(\Theta)}$
    which will show failure of $\Theta$-PR from Proposition \ref{prop:C2framesCharacterization} part (c), hence failure of PR from Lemma \ref{lemma:theta_pr_implies_theta'_pr}. Denote $r=\frac{c-a}{b-a}\in \R$. Let $\alpha \in [0,2\pi]$ and choose $\theta_1=1, \theta_2=e^{i\alpha}, \theta_3 = e^{i(\alpha + \pi)}$ and $\theta_4 = e^{i(\alpha+\pi/2)}$. We will show that one can find $\alpha$ making the cross ratio equal $r$. A direct calculations gives
        \begin{align*}
        \frac{(\theta_2-\theta_4)(\theta_1 - \theta_3)}{(\theta_1-\theta_4)(\theta_2-\theta_3)}&=\frac{(1-e^{i(\alpha + \pi/2)})(e^{i\alpha} - e^{i(\alpha+\pi)})}{(1-e^{i(\alpha+\pi)})(e^{i\alpha} - e^{i(\alpha+\pi/2)})} = \frac{1-ie^{i\alpha}}{1+e^{i\alpha}}\frac{2e^{i\alpha}}{e^{i\alpha}-ie^{i\alpha}}\\
        &= \frac{1-ie^{i\alpha}}{1+e^{i\alpha}}(1+i) = \frac{1-ie^{i\alpha}+i+e^{i\alpha}}{1+e^{i\alpha}}= 1+i\frac{1-e^{i\alpha}}{1+e^{i\alpha}}\\ 
        &= 1+i\Big(1-\frac{2e^{i\alpha}}{1+e^{i\alpha}}\Big)= 1+\frac{\sin\alpha}{1+\cos\alpha} 
    \end{align*}
    As the function $1+\frac{\sin\alpha}{1+\cos\alpha}$ is onto $\R$ we get that there exists $\alpha$ such that $\cratio(\Theta)=r$, finishing the proof.
\end{proof}

Let us now take a closer look at the characterization obtained in Proposition \ref{prop:C2framesCharacterization}. When we regard the parameters $(a,b,c)$ defining the system $\mathbf{G}(a,b,c)$ as a point in $\C^3$, the failure of $\Theta$-PR can be described by explicit algebraic conditions on $(a,b,c)$. For example, $\mathbf{G}(a,b,c)$ fails $2$-PR if and only if $(a,b,c)$ lies in the zero set of the polynomial
$$
P : \C^3 \to \C, \quad P(x,y,z)=(x-y)^2+(y-z)^2+(x-z)^2.
$$
Similarly, $3$-PR can be fully characterized via the zero set of 
$$
Q : \C^3 \to \C, \quad Q(x,y,z) = (x-y)(y-z)(x-z).
$$
For the four-point case of $\Theta=\{\theta_1,\theta_2,\theta_3,\theta_4\}$, define
$$
R_\Theta : \C^3 \to \C, \quad R_\Theta(x,y,z) = (z-x) - \overline{\mathrm{CR}(\theta_1,\theta_2;\theta_3,\theta_4)} \cdot (y-x).
$$
Proposition \ref{prop:C2framesCharacterization} shows that $\mathbf{G}(a,b,c)$ fails $\Theta$-PR if and only if $(a,b,c)$ belongs to the zero set of $Q$ or $R_\Theta$. 
Since the zero set of a nontrivial polynomial is a proper algebraic subset of the ambient space, its complement is Zariski-open and therefore dense (and of full Lebesgue measure) in the Euclidean topology. This shows that a "generic" choice of $(a,b,c)$ implies that $\mathbf{G}(a,b,c)$ does $\Theta$-PR. If $\Theta$ consists of exactly two elements, then as pointed out in the introduction, every full-spark system does $\Theta$-PR and full-spark systems form again a non-empty Zariski-open set. In the next section, we make these observation precise and show that, in every dimension $d$, the property of doing $\Theta$-PR is generically satisfied.

\subsection{Failure of $\Theta$-PR in $\C^d$ for $d\geq 2$}\label{sec:failure}

To formulate a precise statement about generic $\Theta$-PR, we begin by identifying a condition under which $\Theta$-PR fails. This is the content of Theorem \ref{thm:failure}, which proof follows next.

\begin{proof}[Proof of Theorem \ref{thm:failure}]
    First, if $m\leq d-1$ then $\mathbf{G}$ is not complete. Thus, Lemma \ref{lma:completeness} implies that $\mathbf{G}$ fails $\Theta$-PR. Therefore we assume $m \geq d$.
    
    (1). If $d\leq m\leq 2d-2,$ we set $\mathbf{S} = \{g_1,...,g_{d-1}\}$. Since $\mathbf{S}$ and $\mathbf{G} \setminus \mathbf{S}$ cannot be complete in $\C^d$, this shows that $\mathbf{G}$ does not have the complement property. Hence from Proposition \ref{prop:cp} $\mathbf{G}$ fails $2$-PR, and thus by Lemma \ref{lemma:theta_pr_implies_theta'_pr} it also fails $\Theta$-PR for any $\Theta \subseteq \T$ with $|\Theta|\geq 2$.

    (2). Assume $m=2d-1$ and consider the cover $\mathbf{G}_1 = \{g_1,...,g_{d-1}\}$, $\mathbf{G}_2=\{g_d,...,g_{2d-2}\}$ and $\mathbf{G}_3 = \{g_{2d-1}\}$. Since for every $j \in \{ 1,2,3 \}$ it holds that $\mathbf{G}_j$ is not complete, there exists $x_1,x_2,x_3\in \C^d$ such that $x_j\in \mathbf{G}_j^\perp$.
    
    If for every such choice of $x_1,x_2$ it holds that $x_1$ and $x_2$ are linearly dependent, then $\lspan(\mathbf{G}_1) = \lspan(\mathbf{G}_2)$ and both systems $\mathbf{S} = \mathbf{G}_1 \cup \mathbf{G}_2$ and $\mathbf{G} \setminus \mathbf{S} = \mathbf{G}_3$ are incomplete. In particular, $\mathbf{G}$ does not have the complement property and therefore fails $2$-PR. By Lemma \ref{lemma:theta_pr_implies_theta'_pr}, this implies that $\mathbf{G}$ fails $\Theta$-PR for every $\Theta$ with $|\Theta| \geq 3$.

    It remains to consider the case when $x_1$ and $x_2$ can be chosen to be linearly independent. In this case, we show that the assumptions of Corollary \ref{cor:3_pr_characterization} are satisfied. To do so, it suffices to prove that $x_3\in \lspan\{x_1,x_2\}$, or equivalently, the existence of a solution to the equation
    $$a\langle x,g_{2d-1}\rangle + b\langle y,g_{2d-1}\rangle=0.$$
    If $\langle x,g_{2d-1}\rangle =0$, we can take $a=1$ and $b=0$. If $\langle x,g_{2d-1}\rangle \neq 0$ then the choices $b=1$ and $a=-\frac{\langle y,g_{2d-1}\rangle}{\langle x,g_{2d-1}\rangle}$ will do the job. From Corollary \ref{cor:3_pr_characterization} it follows that $\mathbf{G}$ fails $3$-PR, and thus by Lemma \ref{lemma:theta_pr_implies_theta'_pr} it also fails $\Theta$-PR for any $\Theta\subseteq \T$ with $|\Theta|\geq 3$.
    \end{proof}

Given $\Theta\subseteq \T$, we next investigate what one can expect if $\mathbf{G}\subseteq \C^d$ satisfies $|\mathbf{G}|\geq 2d$. Obviously, not every such $\mathbf{G}$ does $\Theta$-PR and it is natural to ask how likely it is for $\mathbf{G}$ with $|\mathbf{G}|\geq 2d$ to do $\Theta$-PR. We answer this question using the framework of Zariski topology. To do so, we start with several preliminary definitions.

\subsection{Generic $\Theta$-PR}\label{sec:zariski_topology}
Consider the space $\C^N$ with coordinates $x = (x_1,\dots,x_N)$. A polynomial in the variables $x_1,\dots,x_N$ is a finite linear combination of monomials $x^\alpha \coloneqq x_1^{\alpha_1} \cdots x_N^{\alpha_N}$ where $\alpha = (\alpha_1,\dots,\alpha_N) \in \N_{0}^N$. Let $\mathcal{P}$ denote the collection of all polynomials in $x_1,\dots,x_N$. Given $P \subseteq \mathcal{P}$, let
$$
V(P) = \{ x \in \C^N : p(x) = 0 \ \text{for every} \ p \in P \}
$$
denote the common zeros of the elements in $P$. A subset $C \subseteq \C^N$ is said to be Zariski-closed in $\C^N$ if there exists $P \subseteq \mathcal{P}$ such that $C=V(P)$. A set $O \subseteq \C^N$ is called Zariski-open if it is the complement of a Zariski-closed set.

According to Theorem \ref{thm:failure}, a system of $m$ elements does $\Theta$-PR in $\Cd$ for any $|\Theta| \geq 3$, only if $m \geq 2d$. The purpose of this section is to show that when $\Theta$ is finite, this necessary condition is in fact generically sufficient: among all systems of $m \geq 2d$ elements, those that do $\Theta$-PR contain a Zariski-open set. 

In order to prove the latter result, we require the following combinatorial lemma.

\begin{lemma}\label{lma:graph_theoretical}
    Let $\Theta\subseteq \T$ and $d\geq 2$. Further, let $\theta = (\theta_1, \dots, \theta_{2d}) \in \Theta^{2d}$ and let $s_1 \geq s_2 \geq \cdots \geq s_t \geq 1$ be the multiplicities of the distinct values appearing in $\theta$. If $s_1 \leq d$, then there exists a partition of the index set $\{ 1,\dots, 2d \}$ into $d$ disjoint pairs such that two indices in every pair take different values of $\theta$. Formally, 
    $$
    \{ 1,\dots, 2d \} = \{ j_1, k_1 \} \, \dot{\cup} \cdots \dot{\cup} \, \{ j_d, k_d \}, \quad \theta_{j_r} \neq \theta_{k_r}, \quad r = 1, \dots, d.
    $$
\end{lemma}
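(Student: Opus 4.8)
The plan is to reduce the assertion to a single, explicit pairing with a fixed offset, after first sorting the indices by their $\theta$-value. The guiding observation is that the hypothesis $s_1 \le d$ says precisely that no value occupies more than half of the $2d$ slots, and such a ``majority-free'' coloring always admits a bichromatic perfect matching, which can be realized very concretely by a cyclic shift of length $d$.

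First I would relabel the indices so that equal values become contiguous. Concretely, choose a permutation $\sigma : \{1,\dots,2d\} \to \{1,\dots,2d\}$ such that the sequence $(\theta_{\sigma(1)}, \dots, \theta_{\sigma(2d)})$ lists all occurrences of the first distinct value, then all occurrences of the second, and so on. After this relabeling the value classes occupy consecutive blocks $B_1, \dots, B_t$, where $B_\ell$ is an interval of integers of length $s_\ell$ and $\sum_{\ell=1}^t s_\ell = 2d$. Next I would define the pairing by the offset $d$: for $r = 1, \dots, d$ set
$$
\{ j_r, k_r \} \coloneqq \{ \sigma(r), \sigma(r+d) \}.
$$
Since the blocks $\{1,\dots,d\}$ and $\{d+1,\dots,2d\}$ are disjoint of equal size $d$, these $d$ pairs are disjoint and exhaust $\{1,\dots,2d\}$, so they indeed partition the index set.

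The crucial step is verifying that each pair is bichromatic, that is $\theta_{j_r} \ne \theta_{k_r}$. Suppose instead that the positions $r$ and $r+d$ landed in the same block $B_\ell$. Since $B_\ell$ is an interval of length $s_\ell$, any two of its elements differ by at most $s_\ell - 1$; hence $d = (r+d)-r \le s_\ell - 1$, forcing $s_\ell \ge d+1$. This contradicts $s_\ell \le s_1 \le d$. Therefore $\sigma(r)$ and $\sigma(r+d)$ lie in distinct blocks and carry distinct values of $\theta$, which is exactly the required condition.

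I do not expect a genuine obstacle here: the entire content is the sort-then-shift construction, and once equal values are grouped into contiguous blocks, the bound $s_1 \le d$ makes the offset-$d$ pairing automatically avoid monochromatic pairs. The only point demanding minor care is the bookkeeping of block lengths needed to justify that two indices at distance exactly $d$ cannot share a block of length at most $d$; this is the one line of the argument where the hypothesis $s_1\le d$ is used, and it is where I would be most careful to state the inequality cleanly.
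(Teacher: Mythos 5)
Your proof is correct and follows essentially the same route as the paper: sort the indices so that equal $\theta$-values form contiguous blocks, pair position $r$ with position $r+d$, and derive the contradiction $s_\ell \ge d+1$ if a pair were monochromatic. Your interval-length argument ($d \le s_\ell - 1$) even makes the paper's ``appear consecutively'' step slightly more explicit, so there is nothing to fix.
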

\begin{proof}
    Let the distinct values corresponding to the multiplicities $s_1 \geq s_2 \geq \cdots \geq s_t \geq 1$ be given by $a_1, \dots, a_t$. For each $i\in \{1,...,t\}$ we denote by $A_i$ the set of indices from $\{1,...,2d\}$ returning $a_i$,
    $$
    A_i = \{ j \in \{ 1, \dots, 2d \} : \theta_j = a_i \}.
    $$
    Then $|A_i|=s_i \leq d$ for every $i \in \{ 1, \dots, t \}$. Let the elements in $A_i$ be denoted by $a_{i,1}, \dots a_{i,s_i}$. We form a permutation of $\{ 1, \dots, 2d \}$ by concatenating the blocks $A_i$:
    $$
    \left ( \pi(1), \dots, \pi(2d) \right ) = (a_{1,1}, \dots, a_{1,s_1}, a_{2,1}, \dots, a_{2,s_2}, a_{t,1}, \dots, a_{t,s_t}).
    $$
    Hence, in this list, indices with the same $\theta$-value appear consecutively in a block of length $s_i$. Now define for every $r \in \{ 1,\dots, d \}$ the pairs
    $$
    \{ j_r,k_r\} \coloneqq \{ \pi(r), \pi(r+d) \}.
    $$
    The latter pairs form a partition of $\{ 1, \dots, 2d \}$. Moreover, every two elements in each pair have different $\theta$-values: fix $r$ and suppose that $\theta_{\pi(r)} = \theta_{\pi(r+d)} = a_i$. Since all indices with value $a_i$ appear consecutively, the latter implies that $s_i \geq d+1$, which yields a contradiction.
\end{proof}

\begin{remark}
    The previous lemma admits an alternative proof in the language of matching theory: if the sets $A_i$ are defined as in the proof of Lemma \ref{lma:graph_theoretical}, then one can consider them as so-called matching classes attached to $(\theta_1, \dots, \theta_{2d})$. Moreover, one considers the index set $\{ 1, \dots, 2d \}$ as vertices $V$ of a finite undirected graph and defines a complete multipartite graph $K_{s_1,\dots,s_t}$ on the vertex set $V = \{ 1, \dots, 2d \}$ with respect to the classes $A_i$. This graph has the property that $\{u,v \}$ is an edge if and only if $\theta_{u} \neq \theta_{v}$. With this, the existence of the claimed partition in Lemma \ref{lma:graph_theoretical} is equivalent to the existence of a so-called perfect matching in $K_{s_1,\dots,s_t}$. The existence of such a matching is guaranteed by Tutte's 1-factor theorem \cite{lovasz2009matching,plummer1992matching}.
\end{remark}

We are now prepared to prove the first part of Theorem \ref{thm:generic_2d}

\begin{proof}[Proof of Theorem \ref{thm:generic_2d} (1)]

It suffices to prove the statement for $m=2d$. For, if $O$ is a non-empty Zariski-open subset of $\C^{d \times m}$ such that every system $\{ g_j \}_{j=1}^m \simeq (g_1,\dots,g_m) \in O$ does $\Theta$-PR in $\Cd$, then for every $b \in \N$ the set $O \times \C^{d \times b}$ is a non-empty Zariski-open subset of $\C^{d \times (m+b)}$ and every $\{ g_j \}_{j=1}^{m+b} \simeq (g_1,\dots,g_{m+b}) \in O \times \C^{d \times b}$ does $\Theta$-PR in $\Cd$.

\textbf{Step 1: Preliminaries and approach.}
Fix $d \geq 2$, let $m = 2d$ and let $\Theta\subseteq\T$ be finite. Given a choice of $m$ elements from $\Theta$, we identify it with a vector $\theta = (\theta_1, \dots, \theta_{m}) \in \Theta^{m}$. We also denote the vector consisting of the complex-conjugate entries of $\theta$ by $\bar{\theta} \coloneqq (\overline{\theta_1}, \dots, \overline{\theta_{m}})$. Moreover, let $D(\bar{\theta} ) = \mathrm{diag}(\bar{\theta}) \in \C^{m \times m}$ denote the diagonal matrix whose entries are $\bar{\theta}$. For a finite sequence of $m$ vectors $\{ g_j \}_{j=1}^{m} \subseteq \Cd$ we denote by $F$ the matrix in $\C^{d \times m}$ that has columns $g_j$,
$$
F = (g_1, \dots, g_{m}) \in \C^{d \times m},
$$
and let $x_{kl}$ denote the elements of the matrix $F$ (the entry of $F$ in row $k$ and column $l$). Next, for $f,h \in \Cd$ we define a column vector $v_{f,h}$ via
$$
v_{f,h} = \begin{pmatrix}
    \bar{f} \\ \bar{h}
\end{pmatrix} \in \C^{2d}.
$$
Finally, we define the matrix $M(\bar{\theta},F)$ via
$$
M(\bar{\theta},F) \coloneqq \left ( F^{T} , -D(\bar{\theta}) F^T \right ) \in \C^{m \times 2d},
$$
where $F^T$ denotes the transpose of $F$.

Using the notation introduced in this step, we observe that the columns in $F$ do $\Theta$-PR, if and only if for every $f,h\in \C^d$ the following implication is satisfied:
\begin{equation}\label{eq:all_theta_implication}
    M(\bar{\theta}, F ) v_{f,h} = 0 \text{ for some }\theta \in \Theta^m, \text{then } f=\theta_0 h \text{ for some } \theta_0\in \Theta.
\end{equation}
The goal is to show that there exists a non-empty Zariski-open subset $O \subseteq \C^{d \times m} \cong \C^{dm}$ such that every $F \in O$ does $\Theta$-PR. To do so, we first observe that the set $\Theta^{m}$ is finite. Hence, if for every $\theta \in \Theta^{m}$, there exists a non-empty Zariski-open subset $O_\theta$ such that every $F \in O_\theta$ satisfies
\begin{equation}\label{eq:fixed_theta_implication}
    M(\bar{\theta}, F ) v_{f,h} = 0 \ \text{for} \ f,h \in \C^d \implies f=\theta_0 h \ \text{for some} \ \theta_0 \in \Theta,
\end{equation}
then
$$
O = \bigcap_{\theta \in \Theta^{m}} O_\theta
$$
will have the desired property (here we use that a finite intersection of non-empty Zariski-open subsets is non-empty Zariski-open).

For the remainder of the proof, we aim to construct for every $\theta$ a set $O_\theta$ with the above property.

We fix $\theta\in\Theta^m$ and consider two separate cases.

\textbf{Step 2: $\theta$ with at least $d$ repetitions}. Let $\theta =(\theta_1,\dots, \theta_m) \in \Theta^m$ and suppose that $\theta$ has the property that there exist a coordinate that repeats at least $d$ times. Hence, there exists $\theta_0 \in \Theta$ and an index set $I \subseteq \{ 1, \dots, 2d \}$ of cardinality $|I|=d$ such that
$$
\theta_j = \theta_0, \quad j \in I.
$$
Define $O_\theta$ to be the collection of full-spark systems $(g_1,\dots, g_m) \in \C^{d \times m}$. It is known that such a collection forms a non-empty Zariski-open set \cite{alexeev2012full}. Then for every $F \in O_\theta$ and $f,h \in \C^d$, the relation
\begin{equation}
    M(\bar{\theta}, F ) v_{f,h} = 0
\end{equation}
implies that
$$
\langle f-\theta_0h, g_j \rangle = 0, \quad j \in I.
$$
The full-spark property implies that the system $\{ g_j \}_{j \in I}$ is a spanning set for $\Cd$. Consequently, $f=\theta_0h$.

\textbf{Step 3: $\theta$ with less than $d$ repetitions}. We now assume that each of the coordinates of vector $\theta$ repeats less than $d$ times. Define
$$
O_\theta = \{ F \in \C^{d \times m} : M(\bar{\theta},F) \ \text{is invertible} \}.
$$
We note that this question is well-posed, since $M(\bar{\theta},F) \in \C^{m \times 2d} = \C^{2d \times 2d}$.

We have to show three properties of $O_\theta$: i) every $F \in O_\theta$ satisfies \eqref{eq:fixed_theta_implication}; ii) $O_\theta$ is Zariski-open; iii) $O_\theta$ is non-empty.

To show i), observe that if $F \in O_\theta$ then $M(\bar{\theta}, F ) v_{f,h} = 0$ implies that $v_{f,h}=0$. Equivalently, $f=h=0$. In particular $f=\theta_0 h$ for any $\theta_0 \in \Theta$.

We next show ii). Notice that each entry of the matrix $M(\bar{\theta},F)$ is a polynomial in the entries of $F$ (in fact, each entry is a linear function of the entries in $F$). Since composition of polynomials are polynomials, the function $F \mapsto \det M(\bar{\theta},F)$ is a polynomial in the entries of $F$. Re-writing $O_\theta$ as
$$
O_\theta = \{ F \in \C^{d \times m} : \det M(\bar{\theta},F) = 0 \}^c,
$$
implies that $O_\theta$ is Zariski-open.

It remains to show iii), i.e., $O_\theta$ is non-empty. We do this by explicitly constructing a matrix $F$ such that $M(\bar{\theta},F)$ is invertible. To do so, let $s_1 \geq s_2 \geq \cdots \geq s_t$ be the multiplicities of the distinct values appearing in $\theta=(\theta_1, \dots, \theta_{2d})$. Because of the additional assumption on $\theta$, it holds that $s_1\leq d-1$.
By Lemma \ref{lma:graph_theoretical}, there exists a partition
$$
\{ 1,\dots, 2d \} = \{ j_1, k_1 \} \, \dot{\cup} \cdots \dot{\cup} \, \{ j_d, k_d \},
$$
such that $\theta_{j_r} \neq \theta_{k_r}$ for every $r = 1, \dots, d$. We now define a matrix $F$ in terms of the latter partition: for every $r =1,\dots,d$ set
$$
g_{j_r} = g_{k_r} = e_r
$$
where $e_r$ denotes the $r$-th unit vector in $\C^d$. Having defined $F$, we look at the respective matrix $M(\bar{\theta},F)$. Recall, that $M(\bar{\theta},F)$ is given by
$$
M(\bar{\theta},F) = \left ( F^{T} , -D(\bar{\theta}) F^T \right ) = \begin{pmatrix}
    g_1^T & -\overline{\theta_1} g_1^T \\
    \vdots & \cdots \\
    g_{2d}^T & -\overline{\theta_{2d}} g_{2d}^T
\end{pmatrix}.
$$
The determinant of $M(\bar{\theta},F)$ is (up to $\pm 1$ factors) invariant under reordering of the rows. Considering the row-ordering
$$
j_1,k_1,j_2,k_2,\dots, j_d,k_d,
$$
it follows that the determinant of $M(\bar{\theta},F)$ is up to a $\pm 1$ factor equal to the determinant of
$$
\begin{pmatrix}
    1 & 0 & \cdots & \cdots & 0 & -\overline{\theta_{j_1}} & 0 & \cdots & \cdots & 0 \\
    1 & 0 & \cdots & \cdots & 0 & -\overline{\theta_{k_1}} & 0 & \cdots & \cdots & 0 \\
    0 & 1 & 0 & \cdots & 0 & 0 & -\overline{\theta_{j_2}}  & 0 & \cdots & 0 \\
    0 & 1 & 0 & \cdots & 0 & 0 & -\overline{\theta_{k_2}}  & 0 & \cdots & 0 \\
    \vdots & \vdots & \vdots & \vdots & \vdots & \vdots & \vdots  & \vdots & \vdots & \vdots \\
    0 & \cdots & \cdots & 0 & 1 & 0 & \cdots  & \cdots & 0 & -\overline{\theta_{j_d}} \\
    0 & \cdots & \cdots & 0 & 1 & 0 & \cdots  & \cdots & 0 & -\overline{\theta_{k_d}}
\end{pmatrix}.
$$
After an additional re-ordering (this time of the columns), it follows that the determinant of $M(\bar{\theta},F)$ is up to a $\pm 1$ factor equal to the determinant of the block-diagonal matrix
$$
\begin{pmatrix}
    1 & -\overline{\theta_{j_1}} & & & \\
    1 & -\overline{\theta_{k_1}} & & & \\
    & & 1 & -\overline{\theta_{j_2}} & \\
    & & 1 & -\overline{\theta_{k_2}} & \\
    & & & & \ddots \\
    & & & & & 1 & -\overline{\theta_{j_d}}\\
    & & & & & 1 & -\overline{\theta_{k_d}}
\end{pmatrix}.
$$
The determinant of this block-diagonal matrix is equal to the complex conjugate of the product
$$
\prod_{r=1}^d \left ( \theta_{j_r} - \theta_{k_r} \right ).
$$
Since $\theta_{j_r} \neq \theta_{k_r}$ for every $r = 1, \dots, d$, it follows that this product does not vanish. We have therefore constructed a matrix $F$ such that $\det M(\bar{\theta},F) \neq 0$. Consequently, $O_\theta \neq \varnothing$ and part iii) is proved.
\end{proof}

Next, we prove the second part of Theorem \ref{thm:generic_2d}.

\begin{proof}[Proof of Theorem \ref{thm:generic_2d} (2)]
    Let $\{ \theta_j : j \in \N \}$ be an enumeration of the elements in $\Theta$. Further, define the subset consisting of the first $n$ elements of $\Theta$ via
    $$
    \Theta_n \coloneqq \{ \theta_1, \dots, \theta_n \}.
    $$
    Since $\Theta_n$ is finite, the first part of Theorem \ref{thm:generic_2d} implies that there exists a non-empty Zariski-open subset $O_n \subseteq \C^{d \times m}$ such that every $\{g_j\}_{j=1}^{m} \cong (g_1,\dots, g_{m}) \in O_n$ does $\Theta_n$-PR. Since every non-empty Zariski-open subset of $\C^N$ with $N \in \N$ is open and dense in the Euclidean topology, it follows from Baire's category theorem that the intersection
    $$
    \mathcal{B} = \bigcap_{n \in \N} O_n
    $$
    is a $G_\delta$-set that is still dense in Euclidean topology. In particular, $\mathcal{B}$ is non-empty. Furthermore $\mathcal{B}$ has full measure. Indeed, let $\mu$ denote the Lebesgue measure on $\C^{d \times m} \cong \C^{dm}$. Since for every $n\in \N$ we have $\mu(O_n^c)=0$, it follows that
    $$\mu(\mathcal{B}^c) = \mu(\bigcup_{n\in \N} O_n^c) \leq \sum_{n=1}^\infty \mu(O_n^c) = 0.$$
    
    Now let $\{ g_j \}_{j=1}^{m} \in \mathcal{B}$ and suppose that $f,h \in \C^d$ are such that
    $$
    \langle f,g_j \rangle = \theta_j \langle h,g_j \rangle, \quad j = 1,\dots,m,
    $$
    for some $\theta_1, \dots, \theta_{m} \in \Theta$. For large enough $n$, it holds that $\theta_1, \dots, \theta_{m} \in \Theta_n$. Since $\{ g_j \}_{j=1}^{m}$ lies in the intersection of all $O_n$, it does $\Theta_n$-PR. Consequently, $f=\theta_0 h$ for some $\theta_0 \in \Theta_n \subseteq \Theta$, which shows that $\{ g_j \}_{j=1}^{m}$ does $\Theta$-PR.
\end{proof}

Finally, combining Theorem \ref{thm:failure} with Theorem \ref{thm:generic_2d}, we obtain Corollary \ref{cor:minimality_theta_pr}.

\subsection{$\Theta$-PR in $\C^d$ for an uncountable $\Theta$}\label{sec:uncountable}

The goal for this section is to prove Proposition \ref{prop:arc}. To do so, we start with some preliminary definitions.

First, we recall that the Caley transform is the Möbius transform
$$
C(z) \colonequals i\,\frac{1+z}{1-z}, 
$$
with inverse
$$
C^{-1}(w) \colonequals \frac{w-i}{w+i}.
$$
The map $C$ is a biholomorphism from the open unit disc $\mathbb D \colonequals \{z\in\C : |z|<1\}$ onto the upper half-plane $\mathbb H \colonequals \{w\in\C : \im w > 0\}$.
On the boundary $\T$ of $\mathbb D$, the Caley transform extends continuously to a homeomorphism
$$
C : \T \to \hat \R, \qquad C(1) = \infty,
$$
sending $\T$ onto the extended real line $\hat \R = \R \cup \{ \infty \}$.
A direct computation shows that for $t\in(0,2\pi)$,
\begin{equation}\label{eq:cayley-on-circle}
  C(e^{it})
  = i\,\frac{1+e^{it}}{1-e^{it}} = - \cot\Bigl(\frac{t}{2}\Bigr)
  = \cot\Bigl(-\frac{t}{2}\Bigr).
\end{equation}
Thus the map $t \mapsto C(e^{it})$ is strictly increasing on $(0,2\pi)$ and maps this interval bijectively onto $\R$. In particular, any closed arc of $\T$ is carried by $C$ onto a closed interval in $\hat \R$.
Observe further that for any $b \in \R$ the horizontal translation
$$
T_b(w) \colonequals w + b
$$
is an automorphism of $\mathbb H$ preserving $\hat \R$, and therefore
$$
M(z) \colonequals C^{-1}\bigl(C(z) + b\bigr)
$$
yields an automorphism of $\mathbb D$ whose boundary values give a Möbius automorphism of $\T$, that is, $M \in \mathrm{Aut}(\T)$.

We are prepared to prove the following lemma.

\begin{lemma}\label{lma:arc_bijection}
    Let
    $$
      A \colonequals \{ e^{it} : t \in [\alpha, \alpha + \ell] \}, 
      \qquad 
      \mathcal{A} \colonequals \{ e^{it} : t \in [\beta, \beta + L] \},
    $$
    be two arcs on $\T$ with $\alpha,\beta \in \R$ and $\ell,L \in (0,2\pi)$. Then there exists a Möbius transform $M\in\mathrm{Aut}(\T)$ such that $M(A)=\mathcal{A}$.
\end{lemma}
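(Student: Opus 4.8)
The plan is to transport the entire problem to the extended real line by means of the Cayley transform $C$, where it collapses to the elementary fact that any bounded closed interval can be carried onto any other by a positively-scaled affine map. Since $\mathrm{Aut}(\T)$ is a group, it suffices to produce, for an arbitrary proper arc, an element of $\mathrm{Aut}(\T)$ carrying it onto one fixed reference arc $A_0 \coloneqq C^{-1}([-1,1])$; the required $M$ is then $M_{\mathcal A}^{-1}\circ M_A$, where $M_A(A)=A_0$ and $M_{\mathcal A}(\mathcal A)=A_0$.

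First I would normalize the position of the arc. Because $\ell\in(0,2\pi)$, the arc $A$ is proper, so its complement in $\T$ is a nonempty open arc and there is a rotation $R_\gamma(z)=e^{i\gamma}z\in\mathrm{Aut}(\T)$ with $1\notin R_\gamma(A)$. Replacing $A$ by $R_\gamma(A)$, we may assume $1\notin A$. Since $C$ is a homeomorphism $\T\to\hat\R$ with $C(1)=\infty$ by \eqref{eq:cayley-on-circle}, the image $C(A)$ is a compact connected subset of $\hat\R\setminus\{\infty\}=\R$, hence a bounded closed interval $[p_1,p_2]$; the inequality $p_1<p_2$ holds because $\ell>0$ and $C$ is injective.

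Next I would map $[p_1,p_2]$ onto $[-1,1]$ by the affine map $\psi(w)=\lambda w+b$ with $\lambda=2/(p_2-p_1)$ and $b=-(p_1+p_2)/(p_2-p_1)$. Crucially $\lambda>0$, so $\psi$ is a real Möbius transform preserving the upper half-plane $\mathbb H$, and therefore $\hat\R$. Consequently $C^{-1}\circ\psi\circ C$ is a composition of biholomorphisms $\mathbb D\to\mathbb H\to\mathbb H\to\mathbb D$, hence a holomorphic automorphism of the disc whose boundary map is a Möbius transform preserving $\T$; by the characterization recalled at the start of Section \ref{sec:moebius} it lies in $\mathrm{Aut}(\T)$. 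Thus $M_A\coloneqq C^{-1}\circ\psi\circ C\circ R_\gamma\in\mathrm{Aut}(\T)$ satisfies $M_A(A)=A_0$. Running the identical construction for $\mathcal A$ yields $M_{\mathcal A}\in\mathrm{Aut}(\T)$ with $M_{\mathcal A}(\mathcal A)=A_0$, and $M\coloneqq M_{\mathcal A}^{-1}\circ M_A$ is the desired transform.

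The only genuine subtlety, and the step I would guard most carefully, is ensuring that the conjugated map stays inside $\mathrm{Aut}(\T)$ rather than being merely a circle-preserving bijection: this is exactly why the scaling factor $\lambda$ must be \emph{positive}, so that $\psi$ preserves $\mathbb H$ (instead of swapping it with the lower half-plane) and the resulting map preserves the disc $\mathbb D$. Everything else is bookkeeping: the rotation reduction is valid precisely because $A$ is proper, and the passage between closed arcs of $\T$ and closed intervals of $\hat\R$ is immediate from the homeomorphism and monotonicity properties of $C$ recorded in \eqref{eq:cayley-on-circle}. Notably, this argument makes transparent why the lemma can hold for arcs of different lengths $\ell\neq L$: elements of $\mathrm{Aut}(\T)$ need not preserve arclength, and the Cayley picture shows the length distortion is absorbed entirely by the affine scaling $\lambda$.
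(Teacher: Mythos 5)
Your proof is correct and takes essentially the same route as the paper: both arguments conjugate by the Cayley transform and act by a real affine automorphism of the upper half-plane, using rotations to handle general position, with the positivity of the scaling ensuring the map preserves $\mathbb{H}$ and hence restricts to an element of $\mathrm{Aut}(\T)$. The only difference is bookkeeping: the paper normalizes so that the arcs are anchored at $1$, making their Cayley images half-lines ending at $\infty$ so that a pure translation suffices, whereas you rotate the arc away from $1$ to obtain a bounded interval, which then requires a dilation and a reduction to a fixed reference arc via the group structure of $\mathrm{Aut}(\T)$ --- both are valid.
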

\begin{proof}
First we consider a special case when $\alpha=0=\beta$.
Using equation \eqref{eq:cayley-on-circle}, for $t\in(0,\ell]$ we have
$$
  C(e^{it}) = -\cot\Big(\frac{t}{2}\Big) \in \R.
$$
As $t$ increases from $0$ to $\ell$, this expression increases from $-\infty$ to $-\cot(\ell/2)$. Together with $C(1)=\infty$, this shows that
$$
  C(A)
  = \left \{ -\cot\left (\frac{t}{2} \right ) : t\in(0,\ell] \right \} \cup \{\infty\}
$$
is a closed interval in $\hat \R$ with finite endpoint $-\cot(\ell/2)$ and infinite endpoint $\infty$.
Now let $b\in\R$ be such that
$$
  -\cot\Big(\frac{\ell}{2}\Big) + b = -\cot\Big(\frac{L}{2}\Big),
$$
The translation $T_b(w) = w + b$ is an automorphism of $\mathbb H$ preserving $\hat \R$, and it maps the finite endpoint of $C(A)$ to the finite endpoint of
$$
  C(\mathcal{A})
  = \left \{ -\cot\left (\frac{t}{2} \right ) : t\in(0,L] \right \} \cup \{\infty\}.
$$
We therefore obtain
$$
  T_b\bigl(C(A)\bigr)
  = C(\mathcal{A}).
$$
Now define
$$
  F(z) \colonequals C^{-1}\bigl(C(z) + b\bigr).
$$
Then $F\in \text{Aut}(\T)$ is a Möbius transform that satisfies $F\bigl(A\bigr)=\mathcal{A}$.

Consider now the general case of any $\alpha,\beta\in \R$. The rotation $z\mapsto e^{i\beta}z$ maps $\{e^{it}:t\in[0,L]\}$ onto $\mathcal A$, and the rotation $z\mapsto e^{-i\alpha}z$ maps $A$ onto $\{e^{it}: t\in [0,\ell]\}$. Therefore, using the Möbius transform $F\in \text{Aut}(\T)$ from the special case, the composition
$$
  M(z) \colonequals e^{i\beta}\,F(e^{-i\alpha}z)
$$
is a Möbius transform in $\mathrm{Aut}(\T)$ that satisfies
$
  M(A) = \mathcal A.
$
\end{proof}

The previous lemma in combination with Theorem \ref{thm:mbs} implies that if $\Theta$ and $\Theta'$ are two non-trivial arcs then $\mathbf{G}$ does $\Theta$-PR if and only if $\mathbf{G}$ does $\Theta'$-PR. In the special case of finite dimensional Hilbert spaces we have the following statement.

\begin{proposition}
Suppose that $\Theta \subseteq \T$ contains a non-trivial arc. Then $\mathbf{G}=\{ g_j \}_{j=1}^m \subseteq \Cd$ does $\Theta$-PR if and only if it does PR. In particular,
$$
\mathcal{N}(\C^d,\Theta) \geq 4d -4 - 2\log_2(d).
$$
\end{proposition}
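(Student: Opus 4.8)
The plan is to prove the equivalence by two implications and then read off the minimality bound. The forward direction is immediate: if $\mathbf{G}$ does PR, i.e.\ $\T$-PR, then since $\Theta\subseteq\T$, Lemma \ref{lemma:theta_pr_implies_theta'_pr} gives that $\mathbf{G}$ does $\Theta$-PR. All the content is in the converse, which I would establish by contraposition: assuming $\mathbf{G}$ fails PR, I will show that it fails $\Theta$-PR.

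First I would extract a failure witness and record its phases. By Lemma \ref{lma:failure_witnessing}, failure of PR provides linearly independent $f,h\in\C^d$ with $|\langle f,g\rangle|=|\langle h,g\rangle|$ for every $g\in\mathbf{G}$, yet $f\neq\theta h$ for all $\theta\in\T$. For each $g$ with $\langle h,g\rangle\neq0$ put $\phi_g\coloneqq \langle f,g\rangle/\langle h,g\rangle\in\T$; for each $g$ with $\langle h,g\rangle=0$ one also has $\langle f,g\rangle=0$, and I assign $\phi_g$ an arbitrary value in $\T$. The resulting set $\Phi\coloneqq\{\phi_g:g\in\mathbf{G}\}$ is a \emph{finite} subset of $\T$, and the same pair $f,h$ witnesses that $\mathbf{G}$ fails $\Phi$-PR, since for every $g$ we have $\langle f,g\rangle=\phi_g\langle h,g\rangle$ with $\phi_g\in\Phi$, while $f\neq\theta h$ for all $\theta\in\Phi\subseteq\T$.

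The core step is to compress the finite set $\Phi$ into a non-trivial arc $A\subseteq\Theta$ by a single circle automorphism. Here I would use the boundary-concentrating behaviour of disc automorphisms: choose $e^{i\alpha}\in\T\setminus\Phi$ and consider $\varphi_r(z)=\frac{z-re^{i\alpha}}{1-re^{-i\alpha}z}\in\mathrm{Aut}(\T)$ for $0<r<1$. A direct computation shows that for every fixed $e^{it}\neq e^{i\alpha}$ one has $\varphi_r(e^{it})\to -e^{i\alpha}$ as $r\to1^-$, so the finitely many points of $\Phi$ are all carried into an arbitrarily small neighbourhood of $-e^{i\alpha}$. Composing with a rotation sending $-e^{i\alpha}$ to an interior point of $A$ and taking $r$ sufficiently close to $1$, I obtain $M_0\in\mathrm{Aut}(\T)$ with $M_0(\Phi)\subseteq A$. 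By Theorem \ref{thm:mbs}, failure of $\Phi$-PR implies failure of $M_0(\Phi)$-PR, and since $M_0(\Phi)\subseteq A\subseteq\Theta$, Lemma \ref{lemma:theta_pr_implies_theta'_pr} upgrades this to failure of $\Theta$-PR. This closes the contrapositive and establishes the equivalence.

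Finally, the equivalence yields $\mathcal{N}(\C^d,\Theta)=\mathcal{N}(\C^d,\T)$, whence the stated inequality follows from the known lower bound $\mathcal{N}(\C^d,\T)\geq 4d-4-2\log_2(d)$ for classical phase retrieval obtained via projective-embedding arguments in \cite{heinosaari2013quantum}. The main obstacle is exactly the construction of $M_0$: one must check that a single Möbius automorphism simultaneously pushes the entire finite phase set $\Phi$ into the prescribed arc, which amounts to the clustering estimate $\varphi_r(e^{it})\to-e^{i\alpha}$ together with the fact that $\Phi$ is finite and avoids $e^{i\alpha}$. Once this is in place, Theorem \ref{thm:mbs} and Lemma \ref{lemma:theta_pr_implies_theta'_pr} carry out the rest mechanically.
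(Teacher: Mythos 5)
Your proof is correct, and its skeleton coincides with the paper's: the forward direction via Lemma \ref{lemma:theta_pr_implies_theta'_pr}, the converse by contraposition using Lemma \ref{lma:failure_witnessing} to extract linearly independent witnesses $f,h$ with a \emph{finite} phase set $\Phi$ (finiteness coming from $|\mathbf{G}|=m<\infty$), the Möbius invariance of Theorem \ref{thm:mbs} to relocate that phase set into $\Theta$, and the lower bound of \cite{heinosaari2013quantum} for the numerical conclusion. Where you genuinely diverge is the relocation step. The paper first encloses $\Phi$ in a proper closed arc $A$ and then invokes its Lemma \ref{lma:arc_bijection} — any two non-trivial arcs of $\T$ are related by some $M\in\mathrm{Aut}(\T)$ — whose proof is an explicit construction via the Cayley transform $C(z)=i\frac{1+z}{1-z}$ and a horizontal translation of the upper half-plane, so that $A$ is mapped \emph{onto} the arc contained in $\Theta$. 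You instead compress the finite set $\Phi$ directly into the target arc using the Blaschke family $\varphi_r(z)=\frac{z-re^{i\alpha}}{1-re^{-i\alpha}z}$ with $e^{i\alpha}\in\T\setminus\Phi$, the pointwise boundary-concentration $\varphi_r(e^{it})\to -e^{i\alpha}$ as $r\to 1^-$ for $e^{it}\neq e^{i\alpha}$ (which is a correct computation: at $r=1$ the quotient degenerates to the constant $-e^{i\alpha}$ away from the point $e^{i\alpha}$), and a final rotation; finiteness of $\Phi$ is exactly what lets a single $r$ work for all phases simultaneously. Both routes then close identically with Theorem \ref{thm:mbs} and the contrapositive of Lemma \ref{lemma:theta_pr_implies_theta'_pr}. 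What each buys: the paper's arc-to-arc lemma is a clean, reusable geometric statement and yields a surjection onto the arc in $\Theta$, at the price of the Cayley-transform bookkeeping; your argument is more elementary and self-contained, but only pushes a finite set (not a whole arc) into $\Theta$ — which is all that is needed here, and indeed both arguments hinge on finiteness of the phase set, so neither extends as written to infinite systems where the witness phases could be dense in $\T$. Your final step also slightly shortcuts the paper, which derives $\mathcal{N}(\C^d,\T)\geq 4d-4-2\log_2(d)$ from the case-by-case bound of \cite{heinosaari2013quantum} via $\alpha(d)\leq\log_2(d)$; citing the simplified consequence directly is harmless, though quoting the source's exact statement and performing that one-line estimate would be cleaner.
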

\begin{proof}
    From Lemma \ref{lemma:theta_pr_implies_theta'_pr}, if $\mathbf{G}$ does PR then it does $\Theta$-PR. It remains to show the reverse direction. To do so, we show that if $\mathbf{G}$ fails PR then it also fails $\Theta$-PR.

    Thus, suppose that $\mathbf{G}$ fails PR. Hence, by Lemma \ref{lma:failure_witnessing}, there exist linearly independent $f,h\in \C^d$ and $\Theta'=\{\theta_1',...,\theta_m'\}\subseteq \T$ such that for every $j \in \{1, \dots, m \}$ one has
    $$\langle f,g_j\rangle = \theta'_j \langle h,g_j\rangle.$$   
    Since $\Theta'$ is a finite subset of $\T$, there exists a proper arc $A$ of $\T$ with $\Theta'\subseteq A$. From Lemma \ref{lemma:theta_pr_implies_theta'_pr} we obtain that $\mathbf{G}$ fails $A$-PR. Next, according to Lemma \ref{lma:arc_bijection}, there exists a Möbius transform $M \in \mathrm{Aut}(\T)$ such that $M(A)=\mathcal{A}$. From Theorem \ref{thm:mbs} this gives that $\mathbf{G}$ fails $\mathcal{A}$-PR, thus again from Lemma \ref{lemma:theta_pr_implies_theta'_pr} it implies that $\mathbf{G}$ fails $\Theta$-PR.

    To prove the second part of the statement, we observe that the first part implies 
    \begin{equation}\label{eq:equality_Theta_T}
        \mathcal{N}(\C^d,\Theta)  = \mathcal{N}(\C^d,\T).
    \end{equation}
    According to \cite{heinosaari2013quantum}, it holds that
    \begin{equation}\label{eq:heinn}
        \mathcal{N}(\C^d,\T) \geq \begin{cases}
            4d-4-2\alpha(d) + 2, & d \ \text{odd, and} \ \alpha(d) = 2 \mod 4 \\
            4d-4-2\alpha(d) + 3, & d \ \text{odd, and} \ \alpha(d) = 3 \mod 4 \\
            4d-4-2\alpha(d) + 1, & \text{else},
        \end{cases},
    \end{equation}
    where $\alpha(d)$ denotes the number of ones in the binary expansion of $d-1$. The quantity $\alpha(d)$ satisfies the inequality $\alpha(d) \leq \log_2(d)$. Inserting this estimate into \eqref{eq:heinn} and using identity \eqref{eq:equality_Theta_T} yields the statement.
    
\end{proof}






\section*{Acknowledgments}

The authors would like to thank the organizers of the research term \textit{Lattice Structures in Analysis and Applications} at the Instituto de Ciencias Matemáticas (Madrid), as well as the organizers of the conference \textit{Phase Retrieval and Banach Lattices} at ETH Zurich, for their hospitality, which made this collaboration possible.

Lukas Liehr is grateful to the Azrieli Foundation for the award of an Azrieli Fellowship, which supported this research. 

\bibliographystyle{plain}
\bibliography{bibfile}

\begin{thebibliography}{10}

\bibitem{akutowicz1956determination}
Edwin~J. Akutowicz.
\newblock On the determination of the phase of a {F}ourier integral, i.
\newblock {\em Transactions of the American Mathematical Society},
  83(1):179--192, 1956.

\bibitem{alaifari2024unique}
Rima Alaifari, Francesca Bartolucci, and Matthias Wellershoff.
\newblock Unique wavelet sign retrieval from samples without bandlimiting.
\newblock {\em Proceedings of the American Mathematical Society, Series B},
  11(30):330--344, 2024.

\bibitem{alaifari2017reconstructing}
Rima Alaifari, Ingrid Daubechies, Philipp Grohs, and Gaurav Thakur.
\newblock Reconstructing real-valued functions from unsigned coefficients with
  respect to wavelet and other frames.
\newblock {\em Journal of Fourier Analysis and Applications}, 23(6):1480--1494,
  2017.

\bibitem{alaifari2017phase}
Rima Alaifari and Philipp Grohs.
\newblock {Phase retrieval in the general setting of continuous frames for
  Banach spaces}.
\newblock {\em SIAM Journal on Mathematical Analysis}, 49(3):1895--1911, 2017.

\bibitem{aldroubi2020phaseless}
Akram Aldroubi, Ilya Krishtal, and Sui Tang.
\newblock Phaseless reconstruction from space--time samples.
\newblock {\em Applied and Computational Harmonic Analysis}, 48(1):395--414,
  2020.

\bibitem{alexeev2012full}
Boris Alexeev, Jameson Cahill, and Dustin~G. Mixon.
\newblock Full spark frames.
\newblock {\em Journal of Fourier Analysis and Applications}, 18(6):1167--1194,
  2012.

\bibitem{alharbi2023stable}
Wedad Alharbi, Daniel Freeman, Dorsa Ghoreishi, Claire Lois, and Shanea
  Sebastian.
\newblock Stable phase retrieval and perturbations of frames.
\newblock {\em Proceedings of the American Mathematical Society, Series B},
  10(31):353--368, 2023.

\bibitem{amir2025stability}
Tamir Amir, Tamir Bendory, Nadav Dym, and Dan Edidin.
\newblock The stability of generalized phase retrieval problem over compact
  groups.
\newblock {\em Applied and Computational Harmonic Analysis}, 82, 2025.

\bibitem{balan2009painless}
Radu Balan, Bernhard~G. Bodmann, Peter~G. Casazza, and Dan Edidin.
\newblock Painless reconstruction from magnitudes of frame coefficients.
\newblock {\em Journal of Fourier Analysis and Applications}, 15(4):488--501,
  2009.

\bibitem{balan2006signal}
Radu Balan, Pete Casazza, and Dan Edidin.
\newblock On signal reconstruction without phase.
\newblock {\em Applied and Computational Harmonic Analysis}, 20(3):345--356,
  2006.

\bibitem{bandeira2014saving}
Afonso~S. Bandeira, Jameson Cahill, Dustin~G. Mixon, and Aaron~A. Nelson.
\newblock {Saving phase: Injectivity and stability for phase retrieval}.
\newblock {\em Applied and Computational Harmonic Analysis}, 37(1):106--125,
  2014.

\bibitem{bartusel2023phase}
David Bartusel, Hartmut F{\"u}hr, and Vignon Oussa.
\newblock Phase retrieval for affine groups over prime fields.
\newblock {\em Linear Algebra and its Applications}, 677:161--193, 2023.

\bibitem{bendory2022algebraic}
Tamir Bendory and Dan Edidin.
\newblock Algebraic theory of phase retrieval.
\newblock {\em Notices Of The American Mathematical Society}, 69(9):1487--1495,
  2022.

\bibitem{bianchi2011phase}
Gabriele Bianchi, Richard Gardner, and Markus Kiderlen.
\newblock Phase retrieval for characteristic functions of convex bodies and
  reconstruction from covariograms.
\newblock {\em Journal of the American Mathematical Society}, 24(2):293--343,
  2011.

\bibitem{cahill2016phase}
Jameson Cahill, Peter Casazza, and Ingrid Daubechies.
\newblock {Phase retrieval in infinite-dimensional Hilbert spaces}.
\newblock {\em Transactions of the American Mathematical Society, Series B},
  3(3):63--76, 2016.

\bibitem{candes2015phase2}
Emmanuel~J. Candes, Yonina~C. Eldar, Thomas Strohmer, and Vladislav Voroninski.
\newblock Phase retrieval via matrix completion.
\newblock {\em SIAM review}, 57(2):225--251, 2015.

\bibitem{candes2015phase}
Emmanuel~J. Candes, Xiaodong Li, and Mahdi Soltanolkotabi.
\newblock Phase retrieval via wirtinger flow: Theory and algorithms.
\newblock {\em IEEE Transactions on Information Theory}, 61(4):1985--2007,
  2015.

\bibitem{chalendar2006overcompleteness}
Isabelle Chalendar, Emmanuel Fricain, and Jonathan~R. Partington.
\newblock Overcompleteness of sequences of reproducing kernels in model spaces.
\newblock {\em Integral Equations and Operator Theory}, 56(1):45--56, 2006.

\bibitem{christ2022examples}
Michael Christ, Ben Pineau, and Mitchell~A Taylor.
\newblock Examples of {H}ölder-stable phase retrieval.
\newblock {\em Mathematical Research Letters}, 31:1339--1352, 2024.

\bibitem{christensen2003introduction}
Ole Christensen.
\newblock {\em An introduction to frames and Riesz bases}.
\newblock Springer, 2003.

\bibitem{conca2015algebraic}
Aldo Conca, Dan Edidin, Milena Hering, and Cynthia Vinzant.
\newblock An algebraic characterization of injectivity in phase retrieval.
\newblock {\em Applied and Computational Harmonic Analysis}, 38(2):346--356,
  2015.

\bibitem{elser2003solution}
Veit Elser.
\newblock Solution of the crystallographic phase problem by iterated
  projections.
\newblock {\em Foundations of Crystallography}, 59(3):201--209, 2003.

\bibitem{Freeman2024}
D.~Freeman, T.~Oikhberg, B.~Pineau, and M.~A. Taylor.
\newblock Stable phase retrieval in function spaces.
\newblock {\em Mathematische Annalen}, 390(1):1--43, Sep 2024.

\bibitem{freeman2025cahill}
Daniel Freeman and Mitchell~A Taylor.
\newblock The {C}ahill-{C}asazza-{D}aubechies problem on {H}ölder stable phase
  retrieval.
\newblock {\em arXiv:2512.08806}, 2025.

\bibitem{grohskopp2020phase}
Philipp Grohs, Sarah Koppensteiner, and Martin Rathmair.
\newblock Phase retrieval: uniqueness and stability.
\newblock {\em SIAM Review}, 62(2):301--350, 2020.

\bibitem{grohs2025phaselesssquareroot}
Philipp Grohs and Lukas Liehr.
\newblock Phaseless sampling on square-root lattices.
\newblock {\em Foundations of Computational Mathematics}, 25(2):351--374, 2025.

\bibitem{grohs2025multi}
Philipp Grohs, Lukas Liehr, and Martin Rathmair.
\newblock Multi-window {STFT} phase retrieval: lattice uniqueness.
\newblock {\em Journal of Functional Analysis}, 288(3):110733, 2025.

\bibitem{grohs2025phase}
Philipp Grohs, Lukas Liehr, and Martin Rathmair.
\newblock Phase retrieval in {F}ock space and perturbation of {L}iouville sets.
\newblock {\em Revista Matem{\'a}tica Iberoamericana}, 41(3):969--1008, 2025.

\bibitem{grohs2021stable}
Philipp Grohs and Martin Rathmair.
\newblock Stable {G}abor phase retrieval for multivariate functions.
\newblock {\em Journal of the European Mathematical Society}, 24(5):1593--1615,
  2021.

\bibitem{heil2011basis}
Christopher Heil.
\newblock {\em A basis theory primer}.
\newblock Springer Science \& Business Media, 2011.

\bibitem{heinosaari2013quantum}
Teiko Heinosaari, Luca Mazzarella, and Michael~M. Wolf.
\newblock Quantum tomography under prior information.
\newblock {\em Communications in Mathematical Physics}, 318(2):355--374, 2013.

\bibitem{hohl2025laplace}
Elias Hohl.
\newblock A {L}aplace equation on a rectangle with mixed boundary conditions.
\newblock {\em SIAM Undergraduate Research Online}, 18, 2025.

\bibitem{jaming2014uniqueness}
Philippe Jaming.
\newblock {Uniqueness results in an extension of Pauli's phase retrieval
  problem}.
\newblock {\em Applied and Computational Harmonic Analysis}, 37(3):413--441,
  2014.

\bibitem{jaming2022effect}
Philippe Jaming, Karim Kellay, and Rolando Perez~III.
\newblock On the effect of zero-flipping on the stability of the phase
  retrieval problem in the {P}aley-{W}iener class.
\newblock {\em Monatshefte f{\"u}r Mathematik}, 198(4):757--776, 2022.

\bibitem{jaming2024gabor}
Philippe Jaming and Martin Rathmair.
\newblock Gabor phase retrieval via semidefinite programming.
\newblock {\em Foundations of Computational Mathematics}, pages 1--67, 2024.

\bibitem{lai2021conjugate}
Chun-Kit Lai, Friedrich Littmann, and Eric~S. Weber.
\newblock Conjugate phase retrieval in {P}aley-{W}iener space.
\newblock {\em Journal of Fourier Analysis and Applications}, 27(6):89, 2021.

\bibitem{lovasz2009matching}
L{\'a}szl{\'o} Lov{\'a}sz and Michael~D Plummer.
\newblock {\em Matching theory}, volume 367.
\newblock American Mathematical Soc., 2009.

\bibitem{mallat2016understanding}
St{\'e}phane Mallat.
\newblock Understanding deep convolutional networks.
\newblock {\em Philosophical Transactions of the Royal Society A: Mathematical,
  Physical and Engineering Sciences}, 374(2065):20150203, 2016.

\bibitem{miao2015beyond}
Jianwei Miao, Tetsuya Ishikawa, Ian~K Robinson, and Margaret~M Murnane.
\newblock {Beyond crystallography: Diffractive imaging using coherent x-ray
  light sources}.
\newblock {\em Science}, 348(6234):530--535, 2015.

\bibitem{orl1994phase}
Arkadiusz Orl, Harry Paul, et~al.
\newblock Phase retrieval in quantum mechanics.
\newblock {\em Physical Review A}, 50(2):R921, 1994.

\bibitem{pauli1990allgemeinen}
Wolfgang Pauli.
\newblock {\em {Die allgemeinen Prinzipien der Wellenmechanik}}.
\newblock Springer, 1990.

\bibitem{plummer1992matching}
Michael~D Plummer.
\newblock {Matching theory - a sampler: from D{\'e}nes K{\"o}nig to the
  present}.
\newblock {\em Discrete Mathematics}, 100(1-3):177--219, 1992.

\bibitem{pohl2014phaseless}
Volker Pohl, Fanny Yang, and Holger Boche.
\newblock Phaseless signal recovery in infinite dimensional spaces using
  structured modulations.
\newblock {\em Journal of Fourier Analysis and Applications}, 20(6):1212--1233,
  2014.

\bibitem{ramos}
João P.~G. Ramos and Mateus Sousa.
\newblock On {P}auli pairs and {F}ourier uniqueness problems.
\newblock {\em Journal of the London Mathematical Society}, 112(6):e70358,
  2025.

\bibitem{raymer1997measuring}
MG~Raymer.
\newblock Measuring the quantum mechanical wave function.
\newblock {\em Contemporary Physics}, 38(5):343--355, 1997.

\bibitem{shechtman2015phase}
Yoav Shechtman, Yonina~C Eldar, Oren Cohen, Henry~Nicholas Chapman, Jianwei
  Miao, and Mordechai Segev.
\newblock Phase retrieval with application to optical imaging: a contemporary
  overview.
\newblock {\em IEEE signal processing magazine}, 32(3):87--109, 2015.

\bibitem{simon2005orthogonal}
Barry Simon.
\newblock {\em Orthogonal polynomials on the unit circle}.
\newblock American Mathematical Soc., 2005.

\bibitem{singer1981bases}
Ivan Singer.
\newblock {\em Bases in Banach spaces II}.
\newblock Springer, 1981.

\bibitem{steinerberger2022eigenvector}
Stefan Steinerberger and Hau-tieng Wu.
\newblock Eigenvector phase retrieval: Recovering eigenvectors from the
  absolute value of their entries.
\newblock {\em Linear Algebra and its Applications}, 652:239--252, 2022.

\bibitem{thakur2011reconstruction}
Gaurav Thakur.
\newblock Reconstruction of bandlimited functions from unsigned samples.
\newblock {\em Journal of Fourier Analysis and Applications}, 17(4):720--732,
  2011.

\bibitem{thibault2008high}
Pierre Thibault, Martin Dierolf, Andreas Menzel, Oliver Bunk, Christian David,
  and Franz Pfeiffer.
\newblock {High-resolution scanning {X}-ray diffraction microscopy}.
\newblock {\em Science}, 321(5887):379--382, 2008.

\bibitem{vinzant2015small}
Cynthia Vinzant.
\newblock A small frame and a certificate of its injectivity.
\newblock In {\em 2015 International Conference on Sampling Theory and
  Applications (SampTA)}, pages 197--200. IEEE, 2015.

\bibitem{wellershoff2024phase}
Matthias Wellershoff.
\newblock Phase retrieval of entire functions and its implications for {G}abor
  phase retrieval.
\newblock {\em Journal of Functional Analysis}, 286(11):110403, 2024.

\bibitem{xia2025stability}
Yu~Xia, Zhiqiang Xu, and Zili Xu.
\newblock Stability in phase retrieval: Characterizing condition numbers and
  the optimal vector set.
\newblock {\em Mathematics of Computation}, 94(356):2931--2960, 2025.

\bibitem{Young}
R.~Young.
\newblock {\em An Introduction to Non-Harmonic Fourier Series}.
\newblock Academic Press, revised edition, 2001.

\end{thebibliography}

\end{document}